\title{Long-Time Existence and Behavior of Solutions to the Inhomogeneous Kinetic FPU Equation}
\author{Haoling Xiang}
\date{}
\newtheorem{theorem}{Theorem}[section]
\newtheorem{proposition}[theorem]{Proposition}
\newtheorem{lemma}[theorem]{Lemma}
\newtheorem{corollary}[theorem]{Corollary}
\newtheorem{remark}[theorem]{Remark}
\theoremstyle{definition}
\begin{document}
\maketitle
\begin{abstract}
We study the inhomogeneous kinetic Fermi--Pasta--Ulam (FPU) equation, a nonlinear transport
equation describing the evolution of phonon density distributions with four-phonon
interactions.
The equation combines free transport in physical space with a nonlinear collision operator
acting in momentum space and exhibiting structural degeneracies.

We develop a functional framework that captures the interplay between spatial transport
and the degeneracies arising in the collision operator.
A key ingredient of the analysis is a dispersive estimate for the transport flow, which
quantifies decay effects generated by spatial propagation.

Using this dispersive mechanism, we obtain improved bounds for the nonlinear collision
operator and show that small solutions near the vacuum can be propagated on time scales
significantly longer than those dictated by conservation laws alone.
In particular, dispersion allows one to extend the classical quadratic lifespan to a
quartic time scale.
\end{abstract}

\section{Introduction}

\subsection{Background on kinetic FPU equations}

The Fermi--Pasta--Ulam (FPU) lattice model, originally introduced in the 1950s to investigate
the equipartition of energy among nonlinear oscillators~\cite{FPU1955}, has since become a
central paradigm for the study of nonlinear wave propagation, energy transport, and
thermalization phenomena in weakly anharmonic chains.

In the kinetic regime, the macroscopic evolution of energy-carrying excitations is described
in terms of phonon density distributions, where phonons are the quasiparticles associated
with lattice vibrations.
The resulting effective description takes the form of a Boltzmann-type kinetic equation,
commonly referred to as the \emph{kinetic FPU equation}.
Denoting by \(f=f(t,x,p)\) the phonon distribution function at time \(t\in\mathbb{R}_+\), with
spatial position \(x\in\mathbb{R}\) and periodic momentum variable
\(p\in\mathbb{T}=\mathbb{R}/2\pi\mathbb{Z}\), the inhomogeneous kinetic FPU equation reads
\begin{equation}\label{eq:kinetic_FPU}
\partial_t f + v(p)\,\partial_x f = \mathcal{C}[f],
\end{equation}
where \(v(p)=\omega'(p)\) denotes the group velocity associated with the dispersion relation
\(\omega(p)=|\sin(p/2)|\), and \(\mathcal{C}[f]\) is a collision operator encoding resonant
four-phonon interactions.
In the next subsection, we define the operator \(\mathcal{C}[f]\) explicitly and decompose it
into its gain and loss components.

\medskip
The \emph{homogeneous} kinetic FPU equation, where the distribution function depends only on
the momentum variable \(p\), has been extensively studied and provides a natural reference
point for the inhomogeneous problem. In this setting, the equation reduces to a phonon
Boltzmann equation describing resonant four-phonon interactions.

From a physical perspective, such phonon Boltzmann descriptions are expected to arise in the
weakly anharmonic (kinetic) regime; see Spohn~\cite{Spohn2006PhononBoltzmann} and
Aoki, Lukkarinen and Spohn~\cite{AokiLukkarinenSpohn2006}, and see also
Onorato--Lvov--Dematteis--Chibbaro~\cite{OnoratoLvovDematteisChibbaro2023}.
This viewpoint is rooted in the classical Boltzmann--Peierls theory of phonon transport~\cite{Peierls1929}.
A key mathematical motivation is provided by Lukkarinen and Spohn~\cite{LS08}, where anomalous
energy transport in the FPU--\(\beta\) chain is linked to the long-time behavior of a linearized
phonon Boltzmann equation; for physical reviews on anomalous conduction in low-dimensional
momentum-conserving lattices, see~\cite{LepriLiviPoliti2003,Dhar2008}.

More recently, Germain, La and Menegaki~\cite{GermainLaMenegaki2024RJ} analyzed the stability of
Rayleigh--Jeans equilibria for the kinetic FPU equation, establishing Lyapunov stability and
entropy production near equilibrium. Related work~\cite{EscobedoGermainLaMenegaki2025Entropy}
introduced an entropy maximization principle for kinetic wave equations on tori, identifying
Rayleigh--Jeans distributions as entropy maximizers under mass and energy constraints. Together,
these works provide a well-developed understanding of homogeneous kinetic dynamics near equilibrium.

\medskip
In contrast, the \emph{inhomogeneous} kinetic FPU equation couples spatial transport with
nonlinear resonant collisions. Spatially inhomogeneous phonon Boltzmann (or Peierls transport)
equations are standard kinetic models for phonon-mediated heat transport in weakly anharmonic
lattices; see, for instance, Spohn~\cite{Spohn2006PhononBoltzmann} and
Aoki, Lukkarinen and Spohn~\cite{AokiLukkarinenSpohn2006}. More generally, kinetic transport
equations can be derived from Hamiltonian dynamics in appropriate scaling limits; see
Spohn~\cite{Spohn1980MarkovianLimits}. A central question is whether, and to what extent, spatial transport can enhance stability and
relaxation by mitigating the nonlinear effects of resonant collisions in the kinetic FPU
setting.

\medskip
Related advances have been made in kinetic equations with transport, where dispersion of the
free flow is used to control nonlinear interactions.
Early work by Castella and Perthame~\cite{CastellaPerthame1996} initiated a dispersive approach
to kinetic transport by establishing Strichartz-type estimates for the free flow.
This dispersive viewpoint has subsequently been employed in kinetic settings where transport
interacts with nonlinear effects, notably in the study of the Boltzmann equation by
Ars\'enio~\cite{Arsenio2011}.
More recently, Ampatzoglou and L\'eger~\cite{AmpatzoglouLeger2024} applied closely related ideas
to an inhomogeneous kinetic wave equation, obtaining global existence and scattering for small
data.
Other techniques have been developed for kinetic models, including hypocoercive/entropy methods
(e.g.~\cite{DesvillettesVillani2001}), phase mixing (e.g.~\cite{MouhotVillani2011}),
and energy/semigroup (or spectral) approaches for Boltzmann-- and Landau-type equations
(e.g.~\cite{Guo2002,BriantGuo2016}). These works provide a complementary perspective to the present
dispersive approach. In our setting, the inhomogeneous kinetic FPU equation presents additional
difficulties due to the degeneracies of the dispersion relation and the resonance structure of
the collision operator.

\medskip
Our analysis builds on the parametrization of the resonant manifold introduced by
Lukkarinen and Spohn~\cite{LS08}, which underlies the recent stability results of
Germain, La and Menegaki~\cite{GermainLaMenegaki2024RJ} for the homogeneous phonon
Boltzmann equation.
In the inhomogeneous setting, we combine dispersive estimates for the transport flow
with refined bounds on the collision operator derived from this resonant geometry,
yielding an extended lifespan for small solutions near the vacuum.

\medskip
\subsection{The Inhomogeneous Kinetic FPU Equation}

The kinetic FPU equation describes the macroscopic evolution of phonon densities
in weakly anharmonic lattice systems.
In the kinetic regime, it takes the form of a transport equation in physical space
coupled with a nonlinear collision operator accounting for resonant four-phonon
interactions.

We consider the inhomogeneous kinetic FPU equation for the phonon distribution
function \(f=f(t,x,p)\),
\begin{equation}\label{eq:kinetic}
    \partial_t f(t,x,p) + \omega'(p)\,\nabla_x f(t,x,p)
    = \mathcal{C}[f](t,x,p),
\end{equation}
where \(x\in\mathbb{R}\), \(p\in\mathbb{T}=\mathbb{R}/2\pi\mathbb{Z}\), and 
\begin{equation}\label{def:omega}\omega(p)=|\sin(p/2)|,\end{equation} 
is the dispersion relation.

The collision operator \(\mathcal{C}\) is given by
\begin{equation}\label{eq:collision_general}
    \mathcal{C}[f](t,x,p_0)
    = \int_{\mathbb{T}^3}
        \delta(\Sigma)\,\delta(\Omega)
        \prod_{\ell=0}^3 \omega_\ell
        \prod_{\ell=0}^3 f_\ell
        \left( \frac{1}{f_0} + \frac{1}{f_1}
        - \frac{1}{f_2} - \frac{1}{f_3} \right)
        \, dp_1\,dp_2\,dp_3,
\end{equation}
where we use the shorthand
\[
p = p_0, \qquad
f = f_0, \qquad
\omega=\omega_0,\qquad
f_i = f(p_i), \qquad 
\omega_i=\omega(p_i),\quad i=0,1,2,3.
\]
The resonance functions are defined by
\[
\Sigma(p_0,p_1,p_2,p_3)=p_0+p_1-p_2-p_3, \qquad
\Omega(p_0,p_1,p_2,p_3)=\omega_0+\omega_1-\omega_2-\omega_3.
\]
As shown by Lukkarinen and Spohn~\cite{LS08}, the common zero set of the resonance
functions \(\Sigma\) and \(\Omega\) consists of trivial and nontrivial resonances.
Although the integrand vanishes on the trivial resonant set, the product
\(\delta(\Sigma)\delta(\Omega)\) is not a priori well defined in the sense of distributions.
In the linearized setting, Lukkarinen and Spohn~\cite{LS08} showed, via a suitable
regularization procedure, that the contribution of trivial resonances vanishes.

In the nonlinear setting, Germain, La and Menegaki~\cite{GermainLaMenegaki2024RJ}
take this fact for granted and develop a parametrization of the nontrivial resonant
manifold to analyze the collision operator.
Following these works, we disregard the trivial resonances and focus on the
nontrivial resonant interactions.

The collision operator admits a natural decomposition into gain and loss terms,
which will play a central role in the analysis.
A detailed discussion of its resonant structure and geometric properties is
deferred to Section~2.

\vspace{1em}
\subsection{Main results}

We now state simplified versions of our main results.
Precise assumptions, functional frameworks, and proofs are given in
Sections~3--5.

\begin{theorem}[Local well-posedness]\label{thm:intro_LWP}
For initial data bounded in suitable weighted \(L^\infty\) spaces in both space and
momentum, the inhomogeneous kinetic FPU equation admits a unique mild solution on a
time interval \(0 \le t \le T\), with lifespan
\[
T \sim \|f_0\|^{-2},
\]
where \(\|\cdot\|\) denotes the corresponding weighted \(L^\infty\) norm.
\end{theorem}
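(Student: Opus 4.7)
The plan is to reformulate the Cauchy problem as a fixed-point equation via Duhamel's formula and run a Picard iteration in a suitable Banach space. Since the collision operator $\mathcal{C}[f]$ is cubic in $f$ (after expanding $1/f_0+1/f_1-1/f_2-1/f_3$ against $\prod_\ell f_\ell$), the natural output of such a scheme is exactly the quadratic lifespan $T \sim \|f_0\|^{-2}$, so the bulk of the work lies in setting up the functional framework and proving a trilinear collision estimate.

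Let $\mathcal{T}(t)g(x,p) := g(x - t\omega'(p), p)$ denote the free transport semigroup, and write the mild formulation
\begin{equation*}
    f(t) = \mathcal{T}(t)f_0 + \int_0^t \mathcal{T}(t-s)\,\mathcal{C}[f(s)]\,ds.
\end{equation*}
I would take the ambient space $Y$ to be a weighted $L^\infty_{x,p}$ space with two features: the $x$-weight is chosen to be transport-compatible (for instance, evaluated along characteristics, or trivial in $x$) so that $\mathcal{T}(t)$ is bounded on $Y$ uniformly in $t$; and the $p$-weight is adapted to the degeneracies of $\omega(p)=|\sin(p/2)|$ near $p=0$, so that the singular geometry of the resonant manifold does not obstruct the trilinear bound. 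Working in $X_T := C([0,T];Y)$ then makes the Duhamel map well defined.

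The heart of the argument is the pair of trilinear estimates
\begin{equation*}
    \|\mathcal{C}[f]\|_Y \le C\,\|f\|_Y^3,
    \qquad
    \|\mathcal{C}[f]-\mathcal{C}[g]\|_Y \le C\bigl(\|f\|_Y+\|g\|_Y\bigr)^2 \|f-g\|_Y.
\end{equation*}
This is the step I expect to be the main obstacle. After expansion, $\mathcal{C}[f](x,p)$ reduces to a sum of cubic integrals over $\mathbb{T}^3$ against the resonant measure $\delta(\Sigma)\delta(\Omega)\prod_\ell\omega_\ell$. Using the parametrization of the nontrivial resonant manifold of Lukkarinen--Spohn, as recalled in Section~2, the delta functions collapse the integration onto a one-dimensional submanifold whose Jacobian is controlled away from the degenerate points $p_i=0$, and the remaining degeneracies at those points are absorbed by the momentum weight built into $Y$. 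Summing over the branches of the resonant manifold and taking $L^\infty$ in $(x,p)$ yields the stated trilinear bound; the Lipschitz version follows from the same analysis applied to the telescoping decomposition of $\prod_\ell f_\ell - \prod_\ell g_\ell$.

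With these estimates in hand, set $R := 2\|f_0\|_Y$ and define
\begin{equation*}
    \Phi[f](t) := \mathcal{T}(t)f_0 + \int_0^t \mathcal{T}(t-s)\,\mathcal{C}[f(s)]\,ds.
\end{equation*}
Then $\Phi$ sends the closed ball of radius $R$ in $X_T$ into itself, and is a strict contraction, provided $CTR^2 \le 1/2$, i.e., $T \le c\,\|f_0\|_Y^{-2}$ for a small constant $c>0$. The Banach fixed-point theorem produces a unique mild solution on $[0,T]$; uniqueness in the full class $C([0,T];Y)$ is obtained by applying the Lipschitz estimate to the difference of two solutions and invoking Grönwall, yielding the asserted lifespan.
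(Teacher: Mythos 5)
Your proposal is correct and follows essentially the same route as the paper: Duhamel's formula, the fact that the transport semigroup preserves the weighted $L^\infty_{x,p}(\omega^\alpha)$ norm (since the momentum weight commutes with $\mathcal{S}(t)$), the trilinear bound $\|\mathcal{C}[f,g,k]\|_{L^\infty_p(\omega^\alpha)}\lesssim \|f\|\,\|g\|\,\|k\|$ for $\alpha\le 1$ (which the paper imports from Germain--La--Menegaki and re-derives via the resonant parametrization in Section~4), and a Banach fixed-point argument yielding $T\sim\|f_0\|^{-2}$.
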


\medskip

Beyond this classical local theory, dispersive effects induced by the transport
flow allow one to work in finer mixed space--momentum norms and to propagate
smallness on longer time scales.

\begin{theorem}[Long-time existence near the vacuum]\label{thm:intro_longtime}
There exists \(\varepsilon_0>0\) such that for all \(0<\varepsilon\le\varepsilon_0\),
if the initial datum satisfies suitable smallness, integrability, and dispersive
bounds of size \(\varepsilon\), then the inhomogeneous kinetic FPU equation admits
a unique mild solution on a time interval \(0 \le t \le T\) with
\[
T \sim \varepsilon^{-4}.
\]
Here \(\varepsilon\) measures the size of the initial datum in the norms appearing
in the above assumptions (see Sections~5 for precise statements).
\end{theorem}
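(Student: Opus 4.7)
The plan is to bootstrap in a mixed space--momentum norm that simultaneously controls the size of $f$ and the dispersive decay produced by the free transport. Starting from the Duhamel representation
\[
f(t) = S(t) f_0 + \int_0^t S(t-s)\,\mathcal{C}[f](s)\,ds,
\qquad
S(t)g(x,p) = g\bigl(x - t\,\omega'(p),\,p\bigr),
\]
the mechanism that upgrades the quadratic lifespan of Theorem~\ref{thm:intro_LWP} is the factor $\langle t\rangle^{-\alpha}$ gained each time $S(t)$ acts on a density bounded in an $L^1_x$-type norm; composed with the trilinear structure of $\mathcal{C}[f]$, this should buy one extra power of $\varepsilon^2$ in the nonlinear balance.

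The first ingredient is a dispersive estimate of the form $\|S(t)g\|_Y \lesssim \langle t\rangle^{-\alpha}\|g\|_Z$, where $Z$ is a weighted $L^1_x$-type space (encoding the dispersive hypothesis on the datum) and $Y$ is an $L^\infty_x$-type space. This reduces to stationary phase in $p$ for the kinetic phase $e^{-it\xi\omega'(p)}$, with Hessian controlled by $\omega''(p) = -\tfrac{1}{4}\sin(p/2)$. Away from $p=0$ one obtains a $\langle t\rangle^{-1/2}$ rate by nondegenerate stationary phase; near $p=0$, the simultaneous vanishing of $\omega''$ and the kink of $\omega$ force a slower Airy-type rate. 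A dyadic cutoff around $p=0$, absorbed into the weights defining $Y$ and $Z$, produces a uniform effective exponent $\alpha$.

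The second ingredient is a trilinear estimate for $\mathcal{C}[f]$ in the bootstrap norm, obtained by expanding
\[
\prod_{\ell=0}^{3}\omega_\ell\,\prod_{\ell=0}^{3} f_\ell\Bigl(\tfrac{1}{f_0}+\tfrac{1}{f_1}-\tfrac{1}{f_2}-\tfrac{1}{f_3}\Bigr)
\]
into cubic monomials in $f$ and placing one factor in a weighted $L^\infty_{x,p}$ norm and the other two in the dispersive norm, with the resonant-manifold parametrization from Section~2 used to integrate out the constraints $\delta(\Sigma)\delta(\Omega)$. Feeding this into Duhamel and assuming $\|f\|_{X_T}\le 2\varepsilon$ on the bootstrap space $X_T$, the nonlinear contribution is bounded by
\[
C\varepsilon^3 \int_0^T \langle t-s\rangle^{-\alpha}\langle s\rangle^{-\alpha}\,ds \lesssim C\varepsilon^3\,T^{1-2\alpha}.
\]
When the effective dispersive exponent yields $\alpha = 1/4$, this is $C\varepsilon^3 T^{1/2}$, smaller than $\varepsilon$ exactly when $T\lesssim \varepsilon^{-4}$, closing the continuity argument at the announced lifespan; uniqueness follows from the same multilinear estimate applied to the difference of two solutions.

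The principal obstacle is to align the dispersive phase-space decomposition with the resonant geometry of $\mathcal{C}$: the Lukkarinen--Spohn resonant manifold is most singular near the vanishing points of $\omega$, which coincide with the points where $\omega''$ degenerates and the dispersive rate is worst. The weights used to absorb the collision singularities must therefore be compatible with those used to state the dispersive estimate, and constructing a single functional framework in which both mechanisms cooperate, rather than cancel each other's gains, is where the bulk of the technical work will be concentrated.
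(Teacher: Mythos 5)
Your overall architecture --- Duhamel iteration, a bootstrap norm mixing uniform control with transport-induced decay, trilinear collision bounds via the resonant parametrization, and the closing balance $\varepsilon^3 T^{1/2}\lesssim\varepsilon$ --- matches the paper's strategy. However, the two load-bearing estimates are respectively mis-identified and deferred, which leaves a genuine gap.

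First, the dispersive estimate. You propose to prove $\|\mathcal S(t)g\|_Y\lesssim\langle t\rangle^{-\alpha}\|g\|_Z$ by stationary phase on $e^{-it\xi\omega'(p)}$ with ``Hessian $\omega''(p)$,'' losing to an Airy-type rate near $p=0$, and then posit an unspecified effective exponent that you set to $\alpha=\tfrac14$. This mis-describes the mechanism and the exponent is unjustified. The paper's estimate (Lemma~\ref{lem:SW-weighted}) is a direct change of variables $y=x-v(p)t$ in the $p$-integral: the Jacobian is $t\,v'(p)=-\tfrac t4\sin(p/2)=-\tfrac t4\,\omega(p)$, so the degeneracy at $p=0$ is absorbed \emph{entirely into a weight} $\omega^{-(1/q-1/r)}$ on the data side, with \emph{no loss of decay rate}: one gets $\|\mathcal S(t)f\|_{L_x^rL_p^q}\le|t/4|^{-(1/q-1/r)}\|f\|_{L_x^qL_p^r(\omega^{-(1/q-1/r)})}$ by weighted Stein--Weiss interpolation, and the rates actually used in the bootstrap are $t^{-1/2}$ for $(r,q)=(2,1)$ and $(\infty,2)$ --- not $t^{-1/4}$. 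The paper's own remark shows that the unweighted dyadic localization you sketch cannot reach the required endpoint, so ``absorbing the cutoff into the weights'' is precisely the step that needs to be made quantitative; without it the arithmetic $\varepsilon^3T^{1-2\alpha}$ has no foundation. (Your final balance still lands on $T\sim\varepsilon^{-4}$ only because $\alpha=\tfrac14$ was chosen to make it do so.)

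Second, the trilinear estimate. Since the collision operator acts only in $p$, pointwise in $x$, closing the $L_x^\infty L_p^2$ component of the bootstrap requires a weighted $L_p^2$ bound on the principal gain term $\mathcal G_1$ that is strictly stronger than what the $L_p^1$ bound plus embedding yields; the paper proves this in Lemma~\ref{lem:G1_L2_bound} by partitioning the integration domain, performing two separate changes of variables $p_0\mapsto p_1$ and $p_0\mapsto p_3$, and exploiting the Jacobian--weight cancellation of Lemma~\ref{lem:jacobian_bound}, and it explicitly remarks that the naive $L^2\hookrightarrow L^1$ route is too weak to close the bootstrap. Your last paragraph correctly flags that reconciling the dispersive weights with the resonant singularities is where the work lies, but that reconciliation \emph{is} the proof; as written, the proposal does not supply it.
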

\medskip

The above results show that transport-induced dispersion plays a central role in
the inhomogeneous kinetic FPU equation.
At the linear level, we establish dispersive bounds for the free transport flow
in weighted spaces adapted to the degeneracy of the FPU dispersion relation.
At the nonlinear level, we derive refined estimates for the gain and loss terms
of the collision operator based on the geometry of the resonant manifold.
Combining these two ingredients yields a dispersive stabilization mechanism that
upgrades the classical quadratic lifespan to a quartic time scale.

\vspace{1em}
\subsection{Outline of the Paper}
This paper is organized as follows.

\begin{enumerate}[leftmargin=2em]

\item \textbf{Section~2: Gain--loss decomposition and resonant parametrization.}
We recall the gain--loss decomposition of the collision operator and the
parametrization of the nontrivial resonant manifold introduced in~\cite{LS08}
and used in~\cite{GermainLaMenegaki2024RJ}.
These tools allow one to rewrite the collision operator in a form suitable
for the weighted estimates developed later.

\item \textbf{Section~3: Dispersive estimates and short-time well-posedness.}
We establish weighted dispersive estimates for the free transport semigroup
associated with~\eqref{eq:kinetic}.
These linear bounds are formulated in mixed space--momentum norms adapted to
the degeneracy of the group velocity and yield a short-time well-posedness
theory for mild solutions.

\item \textbf{Section~4: A priori bounds for the collision operator.}
Section~4 is devoted to the nonlinear analysis of \(\mathcal{C}\).
Using the resonant parametrization, we prove weighted a priori bounds for all
gain and loss components.
The main step is the control of the principal gain term via geometric properties
of the resonant manifold, ensuring compatibility with the dispersive framework
introduced in Section~5.

\item \textbf{Section~5: Long-time existence via dispersive propagation.}
We combine the dispersive decay of the transport flow with the collision bounds
from Section~4 to propagate smallness on long time intervals.
We introduce a dispersive control norm and implement a bootstrap argument that
upgrades the classical quadratic lifespan to a quartic time scale, yielding the
main long-time well-posedness result for small data.
We also show that the resulting mild solutions preserve the physical invariants
of mass and energy.

\end{enumerate}

\medskip
\paragraph{Difficulties of the proof.}
We briefly outline the main analytical difficulties addressed in this work.
The inhomogeneous kinetic FPU equation is studied here in one spatial dimension and
near the vacuum, which constitutes the most challenging regime from a dispersive
viewpoint due to the extremely weak decay of the free transport flow.
In this setting, neither entropy methods nor spectral gap arguments are available.

Moreover, the dispersion relation $\omega(p)=|\sin(p/2)|$ is degenerate at low
frequencies, and the associated resonant manifold has a nontrivial geometry and low
codimension.
As a consequence, the collision operator exhibits strong singularities, making its
control delicate even at the perturbative level.

From a mathematical viewpoint, controlling long-time dynamics in this regime requires
combining dispersive decay generated by the transport flow with refined geometric
information on resonant interactions.
This difficulty is already present for classical collisional kinetic equations such as
the Boltzmann equation, where long-time well-posedness near vacuum is known only under
restrictive assumptions.
The present work addresses these obstacles by exploiting the interplay between weak
dispersive decay and the precise geometry of the FPU resonance manifold.

\medskip

\noindent\textbf{Notational conventions.}
We set $\langle t\rangle := (1+t^2)^{1/2}$.
Throughout the paper, $C$ denotes a positive constant whose value may change from
line to line.
We write $a\lesssim b$ to indicate that $a\le C\,b$ for such a constant $C$.
Unless otherwise specified, all $L_p^q$ norms are taken over the torus $\mathbb{T}$.

We use the standard mixed Lebesgue norms
\[
\|f\|_{L_x^rL_p^q}
:= \left( \int_{\mathbb{R}}
   \Big( \int_{\mathbb{T}} |f(x,p)|^q\,dp \Big)^{r/q} dx \right)^{1/r},
\qquad 1\le r,q\le\infty,
\]
with the usual modification when $r=\infty$ or $q=\infty$.
\vspace{1em}

\section{Gain--loss decomposition and resonant parameterization}

\subsection{Gain--loss decomposition}

In the spirit of the classical Boltzmann equation, the collision operator can be
decomposed into gain and loss contributions as
\begin{equation}\label{eq:C_decomp}
    \mathcal{C}[f]
    = \mathcal{G}_1[f,f,f] + \mathcal{G}_2[f,f,f]
      - \mathcal{L}_1[f,f,f] - \mathcal{L}_2[f,f,f].
\end{equation}
Each term is a trilinear form defined by
\begin{align}
    \mathcal{G}_1[f,g,k]
    &= \int_{\mathbb{T}^3}
        \delta(\Sigma)\,\delta(\Omega)
        \prod_{\ell=0}^3 \omega_\ell\,
        f_1 g_2 k_3
        \, dp_1\,dp_2\,dp_3, \label{eq:G1}\\
    \mathcal{G}_2[f,g,k]
    &= \int_{\mathbb{T}^3}
        \delta(\Sigma)\,\delta(\Omega)
        \prod_{\ell=0}^3 \omega_\ell\,
        f_0 g_2 k_3
        \, dp_1\,dp_2\,dp_3, \label{eq:G2}\\
    \mathcal{L}_1[f,g,k]
    &= \int_{\mathbb{T}^3}
        \delta(\Sigma)\,\delta(\Omega)
        \prod_{\ell=0}^3 \omega_\ell\,
        f_0 g_1 k_3
        \, dp_1\,dp_2\,dp_3, \label{eq:L1}\\
    \mathcal{L}_2[f,g,k]
    &= \int_{\mathbb{T}^3}
        \delta(\Sigma)\,\delta(\Omega)
        \prod_{\ell=0}^3 \omega_\ell\,
        f_0 g_1 k_2
        \, dp_1\,dp_2\,dp_3. \label{eq:L2}
\end{align}
We also introduce the multilinear operator
\begin{equation}\label{def:Cfgk}
    \mathcal{C}[f,g,k]
    := \mathcal{G}_1[f,g,k] + \mathcal{G}_2[f,g,k]
       - \mathcal{L}_1[f,g,k] - \mathcal{L}_2[f,g,k],
\end{equation}
so that \(\mathcal{C}[f]=\mathcal{C}[f,f,f]\).
This decomposition will be used throughout the paper to derive a priori
estimates and to isolate the distinct analytic behaviors of gain and loss terms.

\subsection{Parameterization of the resonant manifold}

The resonance conditions \(\Sigma=0\) and \(\Omega=0\) admit both trivial and
nontrivial solutions.
In the linearized setting, Lukkarinen and Spohn~\cite{LS08} showed, via a suitable
regularization procedure, that the contribution of trivial resonances
\(\{p_0,p_1\}=\{p_2,p_3\}\) vanishes.
In the nonlinear theory, this fact is taken for granted
(see~\cite{GermainLaMenegaki2024RJ}), and in what follows we restrict attention to
the nontrivial resonant manifold.

Following the parameterization introduced by Lukkarinen and Spohn~\cite{LS08} and
subsequently employed in~\cite{GermainLaMenegaki2024RJ}, the nontrivial resonant
manifold can be parameterized as follows.
Let \(p_0,p_2\in\mathbb{T}\), and choose representatives in \([0,2\pi]\).
Then the resonance relations are solved by
\begin{equation}\label{eq:param}
    p_1 = h(p_0,p_2) \mod 2\pi,
\end{equation}
where
\begin{equation}\label{eq:h_def}
    h(x,z)
    = \frac{z-x}{2}
    + 2\arcsin\!\left(
        \tan\!\big(\tfrac{|z-x|}{4}\big)
        \cos\!\big(\tfrac{z+x}{4}\big)
    \right).
\end{equation}
With this parameterization, the collision operator admits a one--dimensional
representation: for \(p_0\in[0,2\pi]\),
\begin{equation}\label{eq:collision_param}
    \mathcal{C}[f](p_0)
    = \int_0^{2\pi}
        \frac{\omega_0\omega_1\omega_2\omega_3}
             {\sqrt{F_+(p_0,p_2)}}
        \prod_{\ell=0}^3 f_\ell
        \left(
            \frac{1}{f}
            + \frac{1}{f_1}
            - \frac{1}{f_2}
            - \frac{1}{f_3}
        \right)
        \, dp_2,
\end{equation}
where
\[
    p_1 = h(p_0,p_2),
    \qquad
    p_3 = p_0 + p_1 - p_2,
\]
and
\begin{equation}\label{eq:Fplus}
    F_+(p_0,p_2)
    = \sqrt{
        \left[\cos\!\left(\frac{p_0}{2}\right)
        + \cos\!\left(\frac{p_2}{2}\right)\right]^2
        + 4 \sin\!\left(\frac{p_0}{2}\right)
          \sin\!\left(\frac{p_2}{2}\right)
    }.
\end{equation}

In particular, the gain and loss operators defined in
\eqref{eq:G1}--\eqref{eq:L2} can be rewritten, for \(p_0\in[0,2\pi]\), as
\begin{align}
    \mathcal{G}_1[f,g,k](p_0)
    &= \int_0^{2\pi}
        \frac{\omega_0\omega_1\omega_2\omega_3}
             {\sqrt{F_+(p_0,p_2)}}
        f_1 g_2 k_3 \, dp_2, \label{eq:G1_param}\\
    \mathcal{G}_2[f,g,k](p_0)
    &= \int_0^{2\pi}
        \frac{\omega_0\omega_1\omega_2\omega_3}
             {\sqrt{F_+(p_0,p_2)}}
        f_0 g_2 k_3 \, dp_2, \label{eq:G2_param}\\
    \mathcal{L}_1[f,g,k](p_0)
    &= \int_0^{2\pi}
        \frac{\omega_0\omega_1\omega_2\omega_3}
             {\sqrt{F_+(p_0,p_2)}}
        f_0 g_1 k_3 \, dp_2, \label{eq:L1_param}\\
    \mathcal{L}_2[f,g,k](p_0)
    &= \int_0^{2\pi}
        \frac{\omega_0\omega_1\omega_2\omega_3}
             {\sqrt{F_+(p_0,p_2)}}
        f_0 g_1 k_2 \, dp_2. \label{eq:L2_param}
\end{align}

These formulas will serve as the starting point for the weighted estimates
of the gain and loss operators developed in later sections.

\section{Dispersive Estimates and Local Well-posedness in Weighted \texorpdfstring{$L^\infty_{x,p}$}{L-infinity(x,p)} Spaces}
In this section, we investigate the dispersive behavior of the linear transport semigroup
\[
\mathcal{S}(t)f(x,p) = f(x - v(p)t, p),
\]
associated with the kinetic FPU equation.
We first establish a weighted mixed-norm dispersive estimate (Lemma~\ref{lem:SW-weighted}),
which quantifies the decay of the transport flow in the spatial variable
in terms of the momentum derivative $v'(p)$.
This estimate serves as a key analytic tool in controlling the nonlinear collision operator
and forms the foundation of the local well-posedness theory developed later in this section.
\medskip
We then apply this dispersive bound to construct mild solutions in weighted
\(L^\infty_{x,p}\) spaces.
Finally, we discuss several structural properties of the solution space,
including the commutation of the semigroup with the weight function and
embedding relations between weighted \(L^q\) spaces.
Together, these results provide a robust linear and functional framework
for the subsequent analysis of the nonlinear kinetic equation.
\\
We define the weighted \(L^q_p\) norm by
\begin{equation}\label{eq:weighted_Lq}
    \|f\|_{L_p^q(W)}
    = \left( \int_{\mathbb{T}} |f(p)\,W(p)|^q \, dp \right)^{1/q},
    \qquad 0 < q < \infty.
\end{equation}
In the case \(q = \infty\), the corresponding norm is given by
\begin{equation}\label{eq:weighted_Linf}
    \|f\|_{L_p^\infty(W)}
    = \sup_{p \in \mathbb{T}} |f(p)|\,W(p).
\end{equation}

\subsection{Dispersive Estimate for the Linear Semi-group}
We state here the weighted Stein--Weiss interpolation theorem (see ~\cite{SteinWeiss1958,Gustavsson1982}).
\begin{lemma}[Weighted Stein--Weiss Interpolation]\label{lem:stein-weiss}
Let $(X,\mu)$ and $(Y,\nu)$ be measure spaces.  
For $j=0,1$, let $0 < p_j, q_j \le \infty$ and let $W_j : X \to [0,\infty)$,  
$V_j : Y \to [0,\infty)$ be measurable weight functions.  
Assume a linear operator $T$ satisfies
\begin{equation}\label{eq:SW-endpoint-bounds}
    \|T f\|_{L^{q_j}(V_j)}
    \;\le\; A_j \, \|f\|_{L^{p_j}(W_j)},
    \qquad j = 0,1,
\end{equation}
for all $f$ in a dense subspace of $L^{p_j}(W_j)$.

For any $0<\theta<1$, define the interpolated exponents
\[
    \frac{1}{p_\theta}
    = \frac{1-\theta}{p_0} + \frac{\theta}{p_1},
    \qquad
    \frac{1}{q_\theta}
    = \frac{1-\theta}{q_0} + \frac{\theta}{q_1},
\]
and the interpolated weights
\[
    W_\theta = W_0^{\,1-\theta} W_1^{\,\theta},
    \qquad
    V_\theta = V_0^{\,1-\theta} V_1^{\,\theta}.
\]

Then $T$ extends to a bounded map
\[
    T : L^{p_\theta}(W_\theta) \longrightarrow L^{q_\theta}(V_\theta),
\]
and satisfies the estimate
\begin{equation}\label{eq:SW-interpolated-bound}
    \|T f\|_{L^{q_\theta}(V_\theta)}
    \;\le\;
    A_0^{\,1-\theta} \, A_1^{\,\theta}
    \,\|f\|_{L^{p_\theta}(W_\theta)}.
\end{equation}

In particular,
\[
   (L^{p_0}(W_0),\, L^{p_1}(W_1))_{\theta}
   = L^{p_\theta}(W_\theta)
   \quad \text{with equivalence of norms}.
\]
\end{lemma}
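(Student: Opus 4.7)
The plan is to reduce the weighted statement to an unweighted interpolation problem by absorbing the weights into the operator, and then to apply Stein's complex interpolation theorem for analytic families. Observe that the hypothesis $\|Tf\|_{L^{q_j}(V_j)} \le A_j \|f\|_{L^{p_j}(W_j)}$ rewrites as
\[
\|V_j \cdot Tf\|_{L^{q_j}(d\nu)} \le A_j \,\|W_j f\|_{L^{p_j}(d\mu)}, \qquad j=0,1.
\]
Setting $g = W_\theta f$, the conclusion \eqref{eq:SW-interpolated-bound} is equivalent to showing that the operator
\[
S g := V_\theta \cdot T\!\bigl(W_\theta^{-1} g\bigr)
\]
maps $L^{p_\theta}(d\mu) \to L^{q_\theta}(d\nu)$ with norm at most $A_0^{1-\theta}A_1^\theta$. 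The difficulty is that the two endpoint hypotheses, after the same substitution, correspond to \emph{different} operators ($V_j$ and $W_j$ instead of $V_\theta$ and $W_\theta$), so Riesz--Thorin cannot be invoked directly.

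The remedy is to embed $S$ in the analytic family, defined on the closed strip $\bar{\Sigma} = \{0\le\mathrm{Re}(z)\le 1\}$,
\[
S_z g \;:=\; V_0^{\,1-z} V_1^{\,z} \cdot T\!\bigl(W_0^{-(1-z)} W_1^{-z}\, g\bigr),
\]
where the complex powers are defined on $\{W_j>0\}$ and $\{V_j>0\}$ respectively and set to zero elsewhere; after a standard truncation one restricts to the sets where all weights are positive and finite. Clearly $S_\theta = S$. On the vertical line $\mathrm{Re}(z)=0$ the prefactor $V_0^{\,1-is}V_1^{\,is} = V_0\cdot e^{is\log(V_1/V_0)}$ has modulus $V_0$, and similarly $|W_0^{-(1-is)}W_1^{-is}| = W_0^{-1}$ with a unimodular oscillating factor $h_s$ satisfying $|h_s|\equiv 1$. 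Hence
\[
|S_{is}g| \;=\; V_0 \,\bigl|T\!\bigl(W_0^{-1} h_s g\bigr)\bigr|,
\]
and the endpoint hypothesis at $j=0$ (together with $\|W_0\cdot W_0^{-1}h_s g\|_{L^{p_0}}=\|g\|_{L^{p_0}}$) gives
$\|S_{is}g\|_{L^{q_0}}\le A_0\|g\|_{L^{p_0}}$, uniformly in $s\in\mathbb{R}$. The same argument on $\mathrm{Re}(z)=1$ gives $\|S_{1+is}g\|_{L^{q_1}}\le A_1\|g\|_{L^{p_1}}$.

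With the boundary estimates in hand, one checks admissibility: for $g$ and $u$ simple functions, $z\mapsto \langle S_z g,\,u\rangle$ is analytic in the open strip, continuous on $\bar{\Sigma}$, and of admissible double-exponential growth (the factors $V_j^{\,z}$, $W_j^{-z}$ are locally bounded on the supports under the reduction above). Stein's interpolation theorem (or, equivalently, the three-lines lemma applied to a normalized version of $\langle S_z g,u\rangle$) then yields
\[
\|S_\theta g\|_{L^{q_\theta}(d\nu)} \;\le\; A_0^{\,1-\theta}A_1^{\,\theta}\,\|g\|_{L^{p_\theta}(d\mu)}
\]
on the dense subspace of simple functions, and a density argument extends the bound to all of $L^{p_\theta}(d\mu)$. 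Translating back via $g=W_\theta f$ and the identities $V_0^{1-\theta}V_1^{\theta}=V_\theta$, $W_0^{-(1-\theta)}W_1^{-\theta}=W_\theta^{-1}$ produces \eqref{eq:SW-interpolated-bound}. The identification $(L^{p_0}(W_0),L^{p_1}(W_1))_\theta = L^{p_\theta}(W_\theta)$ then follows by combining the operator bound just proved with its converse (obtained from Calderón's description of the complex interpolation spaces of weighted $L^p$, applied to the change of measure $d\tilde\mu_j = W_j^{p_j}d\mu$).

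The main obstacle I anticipate is the admissibility verification rather than the formal algebra: one must carefully handle points where some $W_j$ or $V_j$ vanishes or is infinite, control the oscillatory factors $e^{is\log(V_1/V_0)}$ on sets where $\log(V_1/V_0)$ is unbounded (so that the required growth condition for Stein's theorem truly holds), and confirm that the truncation procedure is compatible with taking $\theta$-limits. Once these measure-theoretic technicalities are dispatched, the interpolation argument proceeds along completely classical lines.
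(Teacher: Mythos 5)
The paper does not actually prove this lemma: it is stated as a known result with citations to Stein--Weiss and Gustavsson, so there is no in-paper argument to compare against. Your proof is the classical complex-interpolation proof of the Stein--Weiss theorem --- absorbing the weights into an analytic family $S_z g = V_0^{1-z}V_1^{z}\,T\bigl(W_0^{-(1-z)}W_1^{-z}g\bigr)$, checking that the boundary lines reproduce the two endpoint hypotheses because the oscillatory factors are unimodular, and invoking Stein's theorem for analytic families --- and this is correct, modulo the measure-theoretic truncations you already flag. The identification of the interpolation space via Calder\'on's description of $[L^{p_0}(w_0),L^{p_1}(w_1)]_\theta$ is also the standard route, and the weight normalizations match the multiplier convention $\|fW\|_{L^p}$ used in the lemma.

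One caveat: the lemma as stated allows $0<p_j,q_j\le\infty$, but your argument runs the three-lines lemma on the pairing $\langle S_z g,u\rangle$ with $u$ in the dual space, which characterizes the $L^{q_\theta}$ norm only when $q_\theta\ge 1$; in the quasi-Banach range $q_\theta<1$ this duality step breaks down and a different mechanism (e.g.\ real interpolation with change of weights, as in the Gustavsson reference) is needed. This is a gap only relative to the stated generality, not to its use: every application of the lemma in the paper (the endpoints $L^1\to L^1$, $L^1\to L^\infty$, $L^\infty\to L^\infty$ and their interpolants with $1\le q\le r\le\infty$) stays within the Banach range your proof covers. You might also note explicitly that the extension to mixed norms $L_x^rL_p^q(W)$, which is what the paper actually needs, requires the vector-valued version of Stein interpolation; the paper handles this in a separate remark.
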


\begin{remark}
For later use in the proof of the dispersive estimates, we note that
Lemma~\ref{lem:stein-weiss} extends verbatim to mixed Lebesgue
spaces of the form $L_x^r L_p^q(W)$.
Indeed, mixed Lebesgue spaces form an interpolation scale, and
multiplicative weights interpolate by the geometric mean.
Consequently,
\[
   \big( L_x^{r_0} L_p^{q_0}(W_0),\;
         L_x^{r_1} L_p^{q_1}(W_1) \big)_\theta
   = L_x^{r_\theta} L_p^{q_\theta}(W_\theta),
   \qquad
   W_\theta = W_0^{1-\theta} W_1^{\theta},
\]
with equivalence of norms.
No additional assumptions are required in the mixed setting.
\end{remark}

\begin{lemma}[Weighted dispersive estimate for mixed norms]\label{lem:SW-weighted}
The linear semi-group is defined as \begin{equation}\label{def:S}\mathcal{S}(t)f(x,p)=f(x-v(p)t,p),\end{equation}  with $v(p)=\omega'(p),$ 
and
$\omega(p)=|\sin(p/2)|$. 
\\
Fix $1\le q\le r\le\infty$. Then for all $t\in \mathbb{R}$,
\begin{equation}\label{eq:SW-mixed}
  \|\mathcal{S}(t)f\|_{L_x^{r}L_p^{q}}
  \;\le\; \Big|\frac{t}{4}\Big|^{-\big(\frac1q-\frac1r\big)}\;
  \|f\|_{L_x^{q}\,L_p^{r}\!\big(\big(\omega(p)\big)^{-\left(\frac1q-\frac1r\right)}\big)}.
\end{equation}
Moreover, for $1\le q =r\le \infty$, we have
\[
  \|\mathcal{S}(t)f\|_{L_x^{r}L_p^{q}}
  \;=\;
  \|f\|_{L_x^{q}L_p^{r}}.
\]
\end{lemma}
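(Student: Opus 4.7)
My plan is to reduce the bound to two endpoints and then conclude by the mixed-norm Stein--Weiss interpolation from the remark following Lemma~\ref{lem:stein-weiss}.

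First, I would dispose of the diagonal case $q = r$. For each fixed $p$, the map $x \mapsto f(x - v(p)t, p)$ is a translate of $x \mapsto f(x, p)$, so translation invariance of Lebesgue measure in $x$ gives $\|f(\cdot - v(p)t, p)\|_{L_x^r} = \|f(\cdot, p)\|_{L_x^r}$; taking the $L_p^r$ norm and using Fubini then yields the equality $\|\mathcal{S}(t)f\|_{L_x^r L_p^r} = \|f\|_{L_x^r L_p^r}$.

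Next I would establish the dispersive endpoint $(q, r) = (1, \infty)$. The key computation is that on the fundamental domain $(0, 2\pi)$, $\omega(p) = \sin(p/2)$, so $v(p) = \tfrac{1}{2}\cos(p/2)$ is strictly decreasing and realizes a bijection onto $(-\tfrac{1}{2}, \tfrac{1}{2})$ with $|v'(p)| = \omega(p)/4$. For fixed $x$, I would perform the successive changes of variable $u = v(p)$ and $s = ut$ to obtain
\begin{align*}
\int_0^{2\pi} |f(x - v(p)t, p)| \, dp
&= \frac{4}{|t|} \int_{-|t|/2}^{|t|/2} \frac{|f(x - s,\, v^{-1}(s/t))|}{\omega(v^{-1}(s/t))} \, ds \\
&\le \frac{4}{|t|} \int_{\mathbb{R}} \sup_{p\in\mathbb{T}} \frac{|f(x-s, p)|}{\omega(p)} \, ds
= \frac{4}{|t|}\, \|f\|_{L_x^1 L_p^\infty(\omega^{-1})},
\end{align*}
after dominating the integrand by its pointwise weighted sup in $p$ and extending the $s$-integral to all of $\mathbb{R}$. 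Since the right-hand side is $x$-independent, the sup over $x$ yields the desired endpoint bound.

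Finally I would interpolate. Taking endpoint $0$ as the dispersive bound $(1, \infty)$ (constant $|t/4|^{-1}$, weight $\omega^{-1}$) and endpoint $1$ as the isometry at $(r_1, r_1)$ with $r_1 := r(1 - 1/q) + 1 \in [1, \infty]$ (constant $1$, no weight), and choosing $\theta = 1 + 1/r - 1/q \in [0, 1]$, the mixed-norm Stein--Weiss interpolation yields source space $L_x^q L_p^r(\omega^{-(1/q - 1/r)})$, target space $L_x^r L_p^q$, and interpolated constant $|t/4|^{-(1-\theta)} = |t/4|^{-(1/q - 1/r)}$, exactly as claimed. A short arithmetic check confirms $\theta/r_1 = 1/r$, $(1-\theta) + \theta/r_1 = 1/q$, and the weight exponent $(1-\theta) = 1/q - 1/r$.

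The main obstacle I anticipate is not the interpolation itself but justifying the change of variable: it requires $v$ to be a diffeomorphism off a negligible set, which holds on $(0, 2\pi)$ since $|v'(p)| = \omega(p)/4 > 0$ there, with the single corner of $\omega$ at $p = 0$ removed. The degenerate cases $q = r$ and $(q, r) = (1, \infty)$, where the interpolation parameters become indeterminate, should be handled directly as pure endpoints.
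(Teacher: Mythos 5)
Your proposal is correct and follows essentially the same route as the paper: the same two ingredients (translation-invariance isometries on the diagonal and the change-of-variables dispersive endpoint $L_x^1L_p^\infty(\omega^{-1})\to L_x^\infty L_p^1$ with constant $|t/4|^{-1}$, using $|v'(p)|=\omega(p)/4$), combined via the mixed-norm Stein--Weiss interpolation. The only difference is bookkeeping: the paper interpolates in two stages through the fixed corners $(1,1)$ and $(\infty,\infty)$, whereas you do a single interpolation against the $(q,r)$-dependent diagonal point $L^{r_1}_{x,p}$ with $r_1=r(1-1/q)+1$; your arithmetic for $\theta$, the weight exponent, and the constant checks out, and your endpoint computation is in fact more explicit than the paper's.
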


\begin{proof}
View $\mathcal{S}(t)$ as a linear operator in the $x$-variable with vector-valued
range/target spaces $L_p^{q}$.

\emph{Endpoint 1 (conservation):}
The
translation invariance gives
\[
  \|\mathcal{S}(t)f\|_{L_x^{1}L_p^{1}}=\|f\|_{L_x^{1}L_p^{1}}.
\]

\emph{Endpoint 2 (change of variables):}
By the 1D change of variables $y=x-v(p)t$, 
\[
  \|\mathcal{S}(t)f\|_{L_x^\infty L_p^{1}}
  \;\le\; \Big|\frac{t}{4}\Big|^{-1}\,\|f\|_{L_x^{1}L_p^{\infty}\!\big(\big(\omega(p)\big)^{-1}\big)}.
\]
Identifying $\mathbb T$ with $[0,2\pi]$ (endpoints identified), we note that
$v(p)=\tfrac12\cos(p/2)$ is strictly decreasing on $[0,2\pi]$; hence for $t\neq0$
the change of variables $y=x-v(p)t$ is valid (up to a null set).

Now apply the weighted Stein--Weiss interpolation theorem (See Lemma~\ref{lem:stein-weiss}):
interpolate between
\[
  \mathcal{S}(t):\ L_x^{1}L_p^{1} \to L_x^{1}L_p^{1}
  \quad\text{and}\quad
  \mathcal{S}(t):\ L_x^{1}L_p^{\infty}\big(\big(\omega(p)\big)^{-1}\big) \to L_x^{\infty}L_p^{1}
\]
One gets
\begin{equation}\label{q=1}
  \|\mathcal{S}(t)f\|_{L_x^{r_1}L_p^{1}}
  \;\le\; \Big|\frac{t}{4}\Big|^{-(1-\frac1{r_1})}\,\|f\|_{L_x^{1}L_p^{r_1}\!\big(\big(\omega(p)\big)^{-(1-\frac1{r_1})}\big)},\quad 1\le r_1\le \infty
\end{equation}
\emph{Endpoint 3 (conservation):}
The
translation invariance gives
\[
  \|\mathcal{S}(t)f\|_{L_x^{\infty}L_p^{\infty}}=\|f\|_{L_x^{\infty}L_p^{\infty}}.
\]
Now apply the weighted Stein--Weiss interpolation theorem (See Lemma~\ref{lem:stein-weiss}) between Endpoint 2 and Endpoint 3, i.e.
interpolate between
\[\mathcal{S}(t):\ L_x^{1}L_p^{\infty}\big(\big(\omega(p)\big)^{-1}\big) \to L_x^{\infty}L_p^{1}
  \quad\text{and}\quad
\mathcal{S}(t):\ L_x^{\infty}L_p^{\infty} \to L_x^{\infty}L_p^{\infty}
\]
One gets
\begin{equation}\label{r=infty}
  \|\mathcal{S}(t)f\|_{L_x^{\infty} L_p^{q_1}}
  \;\le\; \Big|\frac{t}{4}\Big|^{-\frac{1}{q_1}}\,\|f\|_{L_x^{q_1}L_p^{\infty}\big(\big( \omega(p) \big)^{-\frac{1}{q_1}}\big)},\quad 1\le q_1\le \infty
\end{equation}
Now apply the weighted Stein--Weiss interpolation theorem (See Lemma~\ref{lem:stein-weiss}) between \eqref{q=1} and \eqref{r=infty}.
One gets \eqref{eq:SW-mixed}.
\end{proof}

\begin{corollary}\label{cor:r=2 q=1-weighted}
Taking $(r,q)=(2,1)$ and $(r,q)=(\infty,2)$ in \eqref{eq:SW-mixed} gives
\[
  \|\mathcal{S}(t)f\|_{L_x^{2}L_p^{1}}
  \;\le\; \Big|\frac{t}{4}\Big|^{-1/2}\,
  \|f\|_{L_x^{1}L_p^{2}\!\big(\omega^{-1/2}\big)}.
\]
\[
  \|\mathcal{S}(t)f\|_{L_x^{\infty}L_p^{2}}
  \;\le\; \Big|\frac{t}{4}\Big|^{-1/2}\,
  \|f\|_{L_x^{2}L_p^{\infty}\!\big(\omega^{-1/2}\big)}.
\]

\end{corollary}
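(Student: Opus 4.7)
The plan is to apply Lemma~\ref{lem:SW-weighted} twice, once with $(r,q)=(2,1)$ and once with $(r,q)=(\infty,2)$. Both estimates are direct specializations of the interpolated bound \eqref{eq:SW-mixed}, so the proof reduces to checking that each pair satisfies the hypothesis $1\le q\le r\le\infty$ and then substituting the exponents.

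First, for $(r,q)=(2,1)$ I would verify $1\le 1\le 2\le\infty$, compute the time-decay exponent $\tfrac{1}{q}-\tfrac{1}{r}=1-\tfrac{1}{2}=\tfrac{1}{2}$, and note that the weight exponent on $\omega(p)$ on the right-hand side of \eqref{eq:SW-mixed} is the same quantity. Plugging in gives the factor $|t/4|^{-1/2}$ and the right-hand norm $L_x^{1}L_p^{2}(\omega^{-1/2})$, matching the first displayed inequality. For $(r,q)=(\infty,2)$ I would repeat the same bookkeeping: $1\le 2\le\infty$, $\tfrac{1}{q}-\tfrac{1}{r}=\tfrac{1}{2}-0=\tfrac{1}{2}$, yielding the same time decay and weight, with right-hand norm $L_x^{2}L_p^{\infty}(\omega^{-1/2})$, matching the second displayed inequality.

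There is no genuine analytical obstacle: given Lemma~\ref{lem:SW-weighted}, the corollary is a purely algebraic specialization. The only point worth flagging is that both choices of $(r,q)$ lie on the line $\tfrac{1}{q}-\tfrac{1}{r}=\tfrac{1}{2}$, which is precisely why the two displayed estimates share the same decay rate $|t/4|^{-1/2}$ and the same weight $\omega^{-1/2}$. This common pair of endpoints is what will later be used to close the dispersive bounds on the nonlinear collision operator in Sections~4 and 5.
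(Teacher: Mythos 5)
Your proposal is correct and is exactly the argument the paper intends: the corollary is a direct substitution of $(r,q)=(2,1)$ and $(r,q)=(\infty,2)$ into \eqref{eq:SW-mixed}, with both pairs satisfying $1\le q\le r\le\infty$ and yielding $\tfrac1q-\tfrac1r=\tfrac12$ for the decay rate and the weight exponent. No further comment is needed.
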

\begin{remark}
One way to eliminate the weight is to exploit the fact that $p$ is
restricted to a bounded domain. In particular, we apply the embedding
theory of $L^q$ spaces on bounded sets, choosing a larger exponent $q_+$
to absorb the weight. This leads to the following dyadic layer-set
dispersive estimate (in an unweighted form)
\begin{equation}\label{eq:dyadic-q-to-q}
  \|\mathcal{S}(t)f\|_{L_x^{q}L_p^{1}}
  \;\lesssim\; t^{-(1-1/q)}\,\|f\|_{L_x^{1}L_p^{q_+}},
  \qquad t>0,\ 1\le q<\infty.
\end{equation}

\begin{proof}
Partition $\mathbb T$ into dyadic level sets of $|v'(p)|$:
\[
  E_\alpha = \{\,p : \alpha/2 < |v'(p)| \le \alpha \,\}, \quad \alpha = 2^{-k},
\]
and
\[
  E_{>1} = \{\,p : |v'(p)| > 1 \,\}.
\]
Since $v'(p)=-\tfrac14\sin(p/2)$ has only simple zeros, we have
\[
  |E_\alpha|\lesssim \alpha.
\]
Take $g=|f|$ and define
\[
  T_t^{E}g(x) := \int_E g(x-v(p)t,p)\,dp.
\]
The one-dimensional change of variables $y=x-v(p)t$ gives
\[
\int_{E_\alpha} |g(x-v(p)t,p)|\,dp
= \int_{\{y:\,\alpha/2<\frac14\sqrt{1-4({\frac{x-y}{t}})^2}\le \alpha\}}
   \frac{|g(y,p_x(y))|}{t|v'(p_x(y))|}
   \, dy
\lesssim (t\alpha)^{-1}\int_{\mathbb{R}}\int_{E_\alpha}|g(y,p)| \,dp\,dy.
\]
Taking the $L_x^\infty$ norm gives
\[
  \|T_t^{E_\alpha}g\|_{L_x^\infty}
  \lesssim (t\alpha)^{-1}\|g\|_{L_x^1L_p^1(E_\alpha)}.
\]
Moreover, from the definition of $T_t^{E_\alpha}g$, we directly get
\[
  \|T_t^{E_\alpha}g\|_{L_x^1}
  \le \|g\|_{L_x^1L_p^1(E_\alpha)}.
\]
From the above two inequalities, interpolating between $L_x^1$ and
$L_x^\infty$ gives
\[
  \|T_t^{E_\alpha}g\|_{L_x^q}
   \lesssim (t\alpha)^{-(1-1/{q})}\|g\|_{L_x^1L_p^1(E_\alpha)}.
\]
By Hölder’s inequality in $p$, it holds that
\[
  \|g\|_{L_x^1L_p^1(E_\alpha)}
  \le |E_\alpha|^{1-1/{q_+}}\|g\|_{L_x^1L_p^{q_+}(E_\alpha)}.
\]
Thus,
\[
  \|T_t^{E_\alpha}g\|_{L_x^q}
  \lesssim \alpha^{1/q-1/{q_{+}}} t^{-(1-1/q)}
           \|g\|_{L_x^1L_p^{q_+}(E_\alpha)}
  \lesssim \alpha^{1/q-1/{q_{+}}} t^{-(1-1/q)}
           \|g\|_{L_x^1L_p^{q_+}}.
\]
Summing in $\alpha$ and using Minkowski's inequality yields
\eqref{eq:dyadic-q-to-q}.
\end{proof}
The dyadic layer-set argument requires an exponent $q_+>q$.  
Indeed, on each dyadic set $E_\alpha=\{p:\alpha/2<|v'(p)|\le\alpha\}$ one obtains
\[
\|T_t^{E_\alpha}g\|_{L_x^q}
   \lesssim t^{-(1-1/q)}\,\alpha^{\,1/q-1/q_+}\,
   \|g\|_{L_x^1L_p^{q_+}(E_\alpha)},
\]
and since $|E_\alpha|\lesssim\alpha$, the sum in $\alpha$ converges only when
$1/q-1/q_+>0$.  
Thus the unweighted dyadic method cannot yield
\[
\mathcal{S}(t): L_x^1L_p^{q}\to L_x^qL_p^{1},\qquad q>1.
\]

To compensate for the small-$\alpha$ degeneracy, one may either  
(i) introduce a weight in $p$ and apply weighted interpolation, or  
(ii) localize away from $\{\omega=0\}$.  
These remedies restore the dispersive decay with the expected rate.
\end{remark}

\vspace{1em}

\subsection{Structure of the Weighted Function Spaces}
Recall the weight function $\omega(p)$ defined in \eqref{def:omega}. We now study a basic property of the weighted $L^q$ spaces associated with this weight function.

\begin{lemma}[Basic properties of the weight and weighted embeddings]\label{lem:weight-properties}
Let $\mathcal{S}(t)$ be the operator defined in \eqref{def:S}. Then the following hold:

\smallskip
\noindent\textbf{(i) Commutation with $\mathcal{S}(t)$.}
For every $l\in\mathbb{R}$,
\begin{equation}\label{eq:commute}
    \mathcal{S}(t)\big(\omega^l(p)f\big)=\omega^l(p)\,\mathcal{S}(t)f .
\end{equation}

\smallskip
\noindent\textbf{(ii) Monotonicity in $L^q_p$.}
For any $1\le q\le\infty$ and $l_2\le l_1$,
\begin{equation}\label{eq:weight-monotonicity}
    \|\omega^{l_1}(p) f\|_{L_p^q}
    \;\le\;
    \|\omega^{l_2}(p) f\|_{L_p^q}.
\end{equation}

\smallskip
\noindent\textbf{(iii) Weighted embedding between different $L^q$ spaces.}
For any exponents
\[
\infty \ge q_1 > q_2 \ge 1,\qquad 
m < \frac{1}{q_2}-\frac{1}{q_1},
\]
and any $l\in\mathbb{R}$, we have
\begin{equation}\label{eq:weighted_embedding}
    \|\omega^{l}(p) f\|_{L_p^{q_2}}
    \;\lesssim\;
    \|\omega^{\,l+m}(p) f\|_{L_p^{q_1}},
\end{equation}
with a constant depending only on $(q_1,q_2,m)$.
\end{lemma}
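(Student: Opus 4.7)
The lemma combines three essentially independent assertions, so I would prove them in order, each as a short self-contained computation. The main point to get right is the integrability threshold in (iii); (i) and (ii) are bookkeeping based on the structure of $\omega$ and $\mathcal{S}(t)$.

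For (i), I would simply unfold both sides of \eqref{eq:commute} using the definition \eqref{def:S}. Since $\omega^l(p)$ depends only on $p$ and $\mathcal{S}(t)$ leaves the momentum variable unchanged, both $\mathcal{S}(t)\bigl(\omega^l(p)f\bigr)(x,p)$ and $\omega^l(p)\mathcal{S}(t)f(x,p)$ are equal to $\omega^l(p)f(x-v(p)t,p)$. No approximation or regularity is needed.

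For (ii), I would use that $\omega(p)=|\sin(p/2)|\le 1$ on $\mathbb{T}$. If $l_2\le l_1$, then $\omega^{\,l_1-l_2}(p)\le 1$ pointwise, hence $\omega^{l_1}(p)\le \omega^{l_2}(p)$. Multiplying by $|f(p)|$ and applying monotonicity of the Lebesgue integral (or taking suprema in the case $q=\infty$) yields \eqref{eq:weight-monotonicity}.

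The substantive step is (iii). My plan is to apply Hölder's inequality after the algebraic factorization $\omega^{l} f = \omega^{-m}\cdot\omega^{l+m}f$. Choose the exponent $s$ conjugate to $q_1$ relative to $q_2$, namely $s=\frac{q_1 q_2}{q_1-q_2}$, so that $\frac{1}{q_2}=\frac{1}{s}+\frac{1}{q_1}$. Hölder then gives
\[
  \|\omega^{l}(p)f\|_{L_p^{q_2}}
  \le \|\omega^{-m}\|_{L_p^{s}}\,\|\omega^{\,l+m}(p)f\|_{L_p^{q_1}},
\]
and the claim reduces to showing $\|\omega^{-m}\|_{L_p^s}<\infty$. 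Since $\omega(p)=|\sin(p/2)|$ has only simple zeros at $p=0$ and $p=2\pi$ on $\mathbb{T}$, the function $|\sin(p/2)|^{-ms}$ is integrable precisely when $ms<1$, which is exactly the hypothesis $m<\frac{1}{q_2}-\frac{1}{q_1}$. The resulting bound on $\|\omega^{-m}\|_{L_p^s}$ depends only on $(q_1,q_2,m)$, as required. The only mild subtlety here is remembering that the singularity threshold is controlled by the product $ms$ rather than $m$ alone, which is precisely why the hypothesis is formulated in terms of $\frac{1}{q_2}-\frac{1}{q_1}$; once this is identified the verification is immediate.
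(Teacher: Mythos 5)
Your proof is correct and follows essentially the same route as the paper: direct unfolding for the commutation, pointwise comparison from $0\le\omega\le1$ for the monotonicity, and the factorization $\omega^{l}f=\omega^{-m}\cdot\omega^{l+m}f$ with H\"older at exponent $s=\frac{q_1q_2}{q_1-q_2}$ for the embedding, reducing to the integrability of $|\sin(p/2)|^{-ms}$ under $ms<1$. No gaps.
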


\begin{proof}
\textbf{(i) Commutation.}  
Since $\mathcal{S}(t)$ acts by translation in $x$ and leaves the
$p$--variable unchanged,
\[
\mathcal{S}(t)(\omega^l f)(x,p)=\omega^l(p)\,f(x-v(p)t,p)
=\omega^l(p)\,(\mathcal{S}(t)f)(x,p).
\]

\smallskip
\textbf{(ii) Monotonicity.}  
From $0\le\omega\le1$ and $l_2\le l_1$ we have
$\omega^{l_1}(p)\le\omega^{l_2}(p)$ pointwise. 
Taking the $L_p^q$-norm gives \eqref{eq:weight-monotonicity}.

\smallskip
\textbf{(iii) Weighted embedding.}
Let $m<\frac{1}{q_2}-\frac{1}{q_1}$.
Write
\[
\omega^{l}f
= \omega^{-m} \cdot \omega^{l+m} f.
\]
Applying Hölder with exponents
\[
\frac{1}{q_2}=\frac{1}{q_1}+\frac{1}{r},
\qquad 
r=\frac{q_1 q_2}{q_1-q_2},
\]
gives
\[
\|\omega^{l} f\|_{L_p^{q_2}}
\le
\|\omega^{-m}\|_{L_p^{r}}
\,
\|\omega^{l+m} f\|_{L_p^{q_1}}.
\]

We have $\|\omega^{-m}\|_{L_p^{r}}<C(q_1,q_2,m)<\infty$ since \(mr<1\), which is exactly our assumption. Therefore
\[
\|\omega^{l}(p) f\|_{L_p^{q_2}}
\;\lesssim\;
\|\omega^{l+m}(p) f\|_{L_p^{q_1}},
\]
which proves \eqref{eq:weighted_embedding}.
\end{proof}
\vspace{1em}
\subsection{Local Well-Posedness in Weighted \texorpdfstring{$L^\infty_{x,p}$}{L-infinity(x,p)} Spaces}
Define the mild solution of the inhomogeneous FPU \eqref{eq:kinetic} by Duhamel formula
\begin{equation}\label{Duhamel}
    f(t)=\mathcal{S}(t)f_0+\int_0^t \mathcal{S}(t-s)\mathcal{C}[f](s)\, ds.
\end{equation}

\begin{theorem}[Local Well-posedness]\label{Theorem:LWP_L_infinity}
Let $f_0 \in L_{x,p}^\infty(\omega^{\alpha})$ with $\alpha \leq 1$.  
Then the inhomogeneous FPU \eqref{eq:kinetic} admits a unique mild solution 
\[
    f \in C\!\left([0,T],\, L_x^\infty L_p^\infty(\omega^{\alpha})\right),
\]
and the time of existence satisfies
\begin{align}
    T \sim \|f_0\|_{L_{x,p}^\infty(\omega^{\alpha})}^{-2}.
\end{align}
\end{theorem}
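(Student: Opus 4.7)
The plan is to construct the mild solution via a standard Picard fixed-point argument on the Duhamel formula \eqref{Duhamel}. Setting $M := \|f_0\|_{L_{x,p}^\infty(\omega^{\alpha})}$, I work in the closed ball
\[
    X_T := \bigl\{\, f \in C([0,T];\, L_{x,p}^\infty(\omega^{\alpha})) \;:\; \sup_{t\in[0,T]}\|f(t)\|_{L_{x,p}^\infty(\omega^{\alpha})} \le 2M \,\bigr\},
\]
and show that the Duhamel map $\Phi[f](t) := \mathcal{S}(t)f_0 + \int_0^t \mathcal{S}(t-s)\,\mathcal{C}[f(s)]\, ds$ is a contraction on $X_T$ whenever $T\lesssim M^{-2}$. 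The linear part is immediate: Lemma~\ref{lem:weight-properties}(i) asserts that $\mathcal{S}(t)$ commutes with multiplication by $\omega^{\alpha}$, and the diagonal case $q=r=\infty$ of Lemma~\ref{lem:SW-weighted} states that $\mathcal{S}(t)$ is an isometry on $L^{\infty}_{x,p}$; composing, one obtains $\|\mathcal{S}(t)g\|_{L_{x,p}^\infty(\omega^{\alpha})} = \|g\|_{L_{x,p}^\infty(\omega^{\alpha})}$ for every $t$.

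The heart of the argument is the trilinear estimate
\begin{equation}\label{eq:plan-trilinear}
    \|\mathcal{C}[f,g,k]\|_{L_{x,p}^\infty(\omega^{\alpha})}
    \;\lesssim\;
    \|f\|_{L_{x,p}^\infty(\omega^{\alpha})}\,
    \|g\|_{L_{x,p}^\infty(\omega^{\alpha})}\,
    \|k\|_{L_{x,p}^\infty(\omega^{\alpha})}.
\end{equation}
Since $\mathcal{C}$ is local in $x$, I fix $x$ and estimate each of the four pieces in \eqref{eq:C_decomp} pointwise in $p_0$ using the one-dimensional resonant representations \eqref{eq:G1_param}--\eqref{eq:L2_param} together with the pointwise inequality $|h(p)| \le \|h\|_{L_{x,p}^\infty(\omega^{\alpha})}\,\omega(p)^{-\alpha}$. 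Each term is controlled by $\|f\|\|g\|\|k\|$ times an auxiliary kernel of the form
\[
    K_{\ast}(p_0)
    \;=\;
    \omega_0^{\alpha}
    \int_0^{2\pi}
    \frac{\omega_0\omega_1\omega_2\omega_3}{\sqrt{F_+(p_0,p_2)}}
    \prod_{j\in \mathcal{I}_{\ast}}\omega_j^{-\alpha}\, dp_2,
    \qquad \ast \in \{\mathcal{G}_1,\mathcal{G}_2,\mathcal{L}_1,\mathcal{L}_2\},
\]
where $\mathcal{I}_{\ast}\subset\{0,1,2,3\}$ indexes the three trilinear factors of the corresponding operator. Because $\alpha\le 1$, each residual weight $\omega_j^{1-\alpha}$ is bounded by $1$, so \eqref{eq:plan-trilinear} is reduced to the uniform kernel bound $\sup_{p_0\in[0,2\pi]} K_{\ast}(p_0) < \infty$.

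The main obstacle lies precisely in this uniform kernel estimate, since the integrand is singular near the degenerate corners of the resonant manifold, where $F_+$ and several of the weights $\omega_j$ vanish simultaneously. My strategy is to split the $p_2$-integral into a neighborhood of such a corner and its complement. Away from the corner, $F_+$ is bounded below and the integral is trivially finite. Near the corner, a local expansion of the explicit formula \eqref{eq:Fplus} shows that the dominant behavior of $F_+$ balances a squared term and a mixed product of small coordinates, so that $1/\sqrt{F_+}$ produces at worst a logarithmic divergence in $p_0$; this is absorbed by the prefactor $\omega_0^{1+\alpha}$, and the constraint $\alpha\le 1$ is used to keep the powers $\omega_j^{1-\alpha}$ nonnegative and hence harmless. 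Granted \eqref{eq:plan-trilinear}, the self-mapping estimate reads
\[
    \|\Phi[f](t)\|_{L_{x,p}^\infty(\omega^{\alpha})} \;\le\; M + C\,T\,(2M)^3,
\]
while the trilinear decomposition $\mathcal{C}[f]-\mathcal{C}[g] = \mathcal{C}[f-g,f,f] + \mathcal{C}[g,f-g,f] + \mathcal{C}[g,g,f-g]$ yields the Lipschitz bound $\|\Phi[f]-\Phi[g]\|_{L^\infty_t L_{x,p}^\infty(\omega^{\alpha})} \le C\,T\,M^2\,\|f-g\|_{L^\infty_t L_{x,p}^\infty(\omega^{\alpha})}$. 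Choosing $T\sim M^{-2}$ with a sufficiently small implicit constant closes both the self-mapping and the contraction properties, and Banach's fixed-point theorem produces the unique $f\in X_T$, which is the desired mild solution.
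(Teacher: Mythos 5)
Your proposal is correct and follows essentially the same route as the paper: a Banach fixed-point argument on the Duhamel map, using that $\mathcal{S}(t)$ commutes with the weight and is an isometry on $L^\infty_{x,p}$, together with the trilinear bound $\|\mathcal{C}[f,g,k]\|_{L_p^\infty(\omega^\alpha)}\lesssim\prod\|\cdot\|_{L_p^\infty(\omega^\alpha)}$. The only difference is that the paper imports this trilinear bound from \cite{GermainLaMenegaki2024RJ} (Sec.~3.1), whereas you sketch it directly via a uniform kernel estimate; your sketch is consistent with the lower bound \eqref{F+lower} (which in fact gives algebraic, not merely logarithmic, control of $1/\sqrt{F_+}$ near the degenerate corners).
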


\begin{proof}
The proof follows from the conservation law and the commutation property of the weights with $\mathcal{S}(t)$~\eqref{eq:commute} 
\begin{equation}\label{LinftyS}
  \|\mathcal{S}(t)f\|_{L_x^{\infty}L_p^{\infty}(\omega^{\alpha})} = \|f\|_{L_x^{\infty}L_p^{\infty}(\omega^{\alpha})},
\end{equation}
together with the boundedness of the collision operator in 
$L_p^\infty(\omega^{\alpha})$ (see \cite{GermainLaMenegaki2024RJ}, Sec.~3.1).  
More precisely, for $\alpha \le 1$ and $\mathcal{C}[f,g,k]$ defined in \eqref{def:Cfgk}, we have
\begin{equation}\label{LinftyC}
    \|\mathcal{C}[f,g,k]\|_{L_p^\infty(\omega^{\alpha})}
    \;\lesssim\;
    \|f\|_{L_p^\infty(\omega^{\alpha})}  \|g\|_{L_p^\infty(\omega^{\alpha})}  \|k\|_{L_p^\infty(\omega^{\alpha})}.
\end{equation}
Define the mapping as 
\[\Phi[f](t):=\mathcal{S}(t)f_0+\int_0^t \mathcal{S}(t-s)\mathcal{C}[f](s)\, ds.\]
It follows from \eqref{LinftyS} and \eqref{LinftyC} that
\[
\sup_{\{0\leq t\leq T\}}\|\Phi[f]\|_{L_{x,p}^\infty(\omega^{\alpha})}\lesssim \|f_0\|_{L_{x,p}^\infty(\omega^{\alpha})}+T\sup_{\{0\leq t\leq T\}}\|f(t)\|^3_{L_{x,p}^\infty(\omega^{\alpha})},
\]
which, together with the Banach fixed point theorem, yields the result.
\end{proof}
\vspace{1em}
\section{A Priori Estimates for the Collision Operator}

In this section we establish weighted estimates for the trilinear components 
of the collision operator $\mathcal{C}[f]$ defined in \eqref{eq:collision_general}.  
The decomposition introduced in \eqref{eq:C_decomp} allows us to treat separately 
the gain and loss contributions $\mathcal{G}_i$ and $\mathcal{L}_i$ $(i=1,2)$, 
each of which exhibits slightly different analytical behavior.

The estimates for $\mathcal{L}_2$, $\mathcal{L}_1$, and $\mathcal{G}_2$ 
follow in a relatively straightforward manner from the parametrization 
of the resonance manifold and the lower bound on $\sqrt{F_+}$.
These bounds rely on elementary symmetry properties of 
$\Sigma$ and $\Omega$ with respect to the interchange of $(p_2,p_3)$, 
and yield natural weighted inequalities involving mixed $L^q_p$ norms.

The main technical difficulty arises in the estimate for $\mathcal{G}_1$, 
where the natural cancellations present in the loss terms are no longer available, 
and the structure of the integral kernel requires a more delicate analysis.  
In particular, controlling the singularity of the Jacobian near resonant points 
demands a refined use of the weight $\omega(p)=|\sin(p/2)|$ and 
a careful decomposition of the integration domain.

We next establish each of these bounds in detail.

\begin{lemma}\label{lem:L2_bound}
For any \(1 \le q \le \infty\) and \(l \in \mathbb{R}\), the trilinear form 
\(\mathcal{L}_2[f,g,k]\) satisfies the estimate
\begin{equation}\label{bound_L2}
    \|\omega^l \mathcal{L}_2[f,g,k]\|_{L_p^q}
    \lesssim
    \|\omega^{l+\frac{1}{2}} f\|_{L_p^q}\,
    \|\omega g\|_{L_p^\infty}\,
    \|\omega^{\frac{1}{2}} k\|_{L_p^1}.
\end{equation}
\end{lemma}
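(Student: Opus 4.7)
The plan is to exploit the fact that in the parametrized form \eqref{eq:L2_param}, the integrand of $\mathcal{L}_2[f,g,k](p_0)$ contains the factor $f_0=f(p_0)$, which is independent of the integration variable $p_2$. This allows $f(p_0)$ together with $\omega_0$ to be pulled out of the $p_2$-integral, reducing the lemma to a pointwise bound in $p_0$ that can then be integrated against the $L^q_p$ norm. The outline is: (i) factor out the $p_0$-dependent pieces; (ii) absorb $g$ via the trivial bound $\omega_1|g_1|\le\|\omega g\|_{L^\infty_p}$; (iii) use a pointwise lower bound on $F_+$ to redistribute the remaining weights between $p_0$ and $p_2$; (iv) take $L^q_{p_0}$.

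The key analytic ingredient is a pointwise lower bound on the Jacobian factor $F_+$. Since on the relevant interval $[0,2\pi]$ one has $\sin(p/2)\ge0$, the term $[\cos(p_0/2)+\cos(p_2/2)]^2$ in \eqref{eq:Fplus} is non-negative and can be discarded, giving
\[
  F_+(p_0,p_2)\;\ge\;2\,\omega^{1/2}(p_0)\,\omega^{1/2}(p_2),
  \qquad\text{hence}\qquad
  \sqrt{F_+(p_0,p_2)}\;\gtrsim\;\omega^{1/4}(p_0)\,\omega^{1/4}(p_2).
\]
Combined with the trivial bound $\omega_3\le 1$ and the weight monotonicity $\omega^a\le\omega^b$ for $a\ge b\ge 0$ (Lemma~\ref{lem:weight-properties}(ii)), this yields the pointwise estimate
\[
  \omega^{l+1}(p_0)\,\frac{\omega_2\,\omega_3}{\sqrt{F_+(p_0,p_2)}}
  \;\lesssim\;
  \omega^{l+3/4}(p_0)\,\omega^{3/4}(p_2)
  \;\le\;
  \omega^{l+1/2}(p_0)\,\omega^{1/2}(p_2),
\]
which is exactly the weight distribution appearing on the right-hand side of \eqref{bound_L2}.

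Putting everything together, one obtains the pointwise inequality
\[
  \big|\omega^l(p_0)\mathcal{L}_2[f,g,k](p_0)\big|
  \;\lesssim\;
  \omega^{l+1/2}(p_0)\,|f(p_0)|\;\|\omega g\|_{L_p^\infty}\;\|\omega^{1/2}k\|_{L_p^1},
\]
and taking the $L^q_{p_0}$ norm finishes the proof. I do not expect a serious obstacle here: among the four trilinear pieces, $\mathcal{L}_2$ is the most favorable precisely because $f_0$ is independent of $p_2$, so the only nontrivial step is the identification of the $F_+$ lower bound, which is immediate from \eqref{eq:Fplus}. The real work will come later for $\mathcal{G}_1$, where no analogous factorization is available and the Jacobian singularity must be analyzed more carefully.
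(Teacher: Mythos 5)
Your proposal is correct and follows essentially the same route as the paper: discard the non-negative $[\cos(p_0/2)+\cos(p_2/2)]^2$ term to get a lower bound on the Jacobian factor, pull out $\omega_0^{l+1/2}f_0$, absorb $\omega_1 g_1$ into $\|\omega g\|_{L_p^\infty}$ and $\omega_3\le 1$, and integrate $\omega_2^{1/2}k_2$ in $p_2$. The only cosmetic difference is that you read \eqref{eq:Fplus} literally (with its outer square root) and so obtain $\sqrt{F_+}\gtrsim\omega_0^{1/4}\omega_2^{1/4}$ rather than the paper's $\sqrt{F_+}\ge 2\omega_0^{1/2}\omega_2^{1/2}$, but since $0\le\omega\le1$ the resulting exponents $l+3/4$ and $3/4$ are relaxed to $l+1/2$ and $1/2$ by monotonicity, so the stated bound follows either way.
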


\begin{proof}
Recall that
\[
F_+(p_0,p_2)
    = \left[\cos\!\left(\tfrac{p_0}{2}\right)
        + \cos\!\left(\tfrac{p_2}{2}\right)\right]^2
      + 4 \sin\!\left(\tfrac{p_0}{2}\right)
        \sin\!\left(\tfrac{p_2}{2}\right),
\]
so that
\begin{equation}\label{F+lower}
\sqrt{F_+(p_0,p_2)}
    \ge 2 \sqrt{\sin\!\left(\tfrac{p_0}{2}\right)
          \sin\!\left(\tfrac{p_2}{2}\right)}
    = 2 \,\omega_0^{1/2}\omega_2^{1/2}.
\end{equation}
Using this lower bound, we have
\begin{align*}
\|\omega^l \mathcal{L}_2[f,g,k]\|_{L_p^q}
&= \Bigg\|\omega_0^l
    \int_0^{2\pi}
    \frac{\omega_0\omega_1\omega_2\omega_3}
         {\sqrt{F_+(p_0,p_2)}}\,
    f(p_0)\,g(p_1)\,k(p_2)\,
    dp_2
  \Bigg\|_{L_p^q} \\[4pt]
&\lesssim
  \Bigg\|
    \omega_0^{\,l+\frac{1}{2}}
    \int_0^{2\pi}
      f(p_0)\,g(p_1)\,k(p_2)\,
      \omega_1\,\omega_2^{1/2}\,\omega_3\,
    dp_2
  \Bigg\|_{L_p^q} \\[4pt]
&\le
  \|\omega^{l+\frac{1}{2}} f\|_{L_p^q}
  \Bigg\|
    \int_0^{2\pi}
      g(p_1)\,k(p_2)\,
      \omega_1\,\omega_2^{1/2}\,\omega_3\,
    dp_2
  \Bigg\|_{L_p^\infty} \\[4pt]
&\lesssim
  \|\omega^{l+\frac{1}{2}} f\|_{L_p^q}\,
  \|\omega g\|_{L_p^\infty}\,
  \|\omega^{1/2} k\|_{L_p^1}.
\end{align*}
This completes the proof.
\end{proof}
\medskip

\begin{lemma}\label{lem:G2_bound}
For any \(1 \le q \le \infty\) and \(l \in \mathbb{R}\), the trilinear form 
\(\mathcal{G}_2[f,g,k]\) satisfies the estimate
\begin{equation}\label{bound_G2}
    \|\omega^l \mathcal{G}_2[f,g,k]\|_{L_p^q}
    \lesssim
    \|\omega^{l+\frac{1}{2}} f\|_{L_p^q}\,
    \|\omega^{\frac{1}{2}}g\|_{L_p^1}\,
    \|\omega k\|_{L_p^\infty}.
\end{equation}
\end{lemma}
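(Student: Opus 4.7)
The plan is to mirror the argument used for $\mathcal{L}_2$ in Lemma~\ref{lem:L2_bound}, exploiting the parametrization \eqref{eq:G2_param} together with the pointwise lower bound $\sqrt{F_+(p_0,p_2)}\ge 2\,\omega_0^{1/2}\omega_2^{1/2}$ already established in \eqref{F+lower}. The only structural difference between the two trilinear forms is that $\mathcal{G}_2$ carries the factor $g(p_2)\,k(p_3)$ in place of the $g(p_1)\,k(p_2)$ appearing in $\mathcal{L}_2$, which swaps the roles of the $L^\infty$ and $L^1$ factors among $(g,k)$ but leaves the rest of the analysis intact.

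First I would apply \eqref{F+lower} to reduce the kernel to $\omega_0^{1/2}\omega_1\omega_2^{1/2}\omega_3$, and then substitute into \eqref{eq:G2_param}. Since $f(p_0)$ depends only on the external variable, it can be pulled out of the $dp_2$-integral together with the weight $\omega_0^{\,l+1/2}$, giving
$$
\bigl|\omega_0^l\,\mathcal{G}_2[f,g,k](p_0)\bigr|
\;\lesssim\;
\omega_0^{\,l+1/2}\,|f(p_0)|
\int_0^{2\pi} \omega_1\,\omega_2^{1/2}\,\omega_3\;|g(p_2)|\,|k(p_3)|\,dp_2.
$$
Taking the $L^q_p$ norm and applying H\"older's inequality in $p_0$ extracts the factor $\|\omega^{\,l+1/2}f\|_{L_p^q}$, so that the task reduces to bounding the inner integral uniformly in $p_0$.

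For this inner integral, I would use the parametrization $p_1=h(p_0,p_2)$ and $p_3=p_0+p_1-p_2$, which makes $p_3$ a well-defined function of $(p_0,p_2)$. The factor $k(p_3)\omega_3$ is therefore pointwise bounded by $\|\omega k\|_{L_p^\infty}$; the factor $\omega_1$ is trivially controlled by $1$; and the remaining integrand $\omega_2^{1/2}|g(p_2)|$ integrates in $p_2$ to exactly $\|\omega^{1/2}g\|_{L_p^1}$. Assembling these three estimates yields \eqref{bound_G2}.

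I do not anticipate a genuine obstacle here: the proof is the direct analogue of Lemma~\ref{lem:L2_bound}, the singularity $F_+^{-1/2}$ is absorbed by the same half-power weights on $p_0$ and $p_2$, and the swap $g_1 k_2 \mapsto g_2 k_3$ simply interchanges which of $g$ and $k$ supplies the $L^\infty$ and $L^1$ contributions. No refined use of the geometry of the resonant manifold is needed at this stage, in contrast to what will be required for $\mathcal{G}_1$.
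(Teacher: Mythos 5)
Your proof is correct and follows essentially the same route as the paper: reduce the kernel via the lower bound \eqref{F+lower}, factor out $\omega_0^{\,l+1/2}f(p_0)$ in $L_p^q$, and bound the remaining $dp_2$-integral by $\|\omega^{1/2}g\|_{L_p^1}\|\omega k\|_{L_p^\infty}$ using $\omega_1\le 1$ and the pointwise control of $k(p_3)\omega_3$. Your observation that the only change from Lemma~\ref{lem:L2_bound} is the swap of which factor is measured in $L^1$ versus $L^\infty$ is exactly the paper's point.
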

\begin{proof}
The proof follows the same argument as in Lemma~\ref{lem:L2_bound}, 
using the parametrization of the resonant manifold and the lower bound \eqref{F+lower}.
Starting from the definition of \(\mathcal{G}_2\) and proceeding as before, 
we obtain
\begin{align*}
\|\omega^l \mathcal{G}_2[f,g,k]\|_{L_p^q}
&\lesssim
  \Bigg\|
    \omega_0^{\,l+\frac{1}{2}}
    \int_0^{2\pi}
      f(p_0)\,g(p_2)\,k(p_3)\,
      \omega_1\,\omega_2^{1/2}\,\omega_3\,
    dp_2
  \Bigg\|_{L_p^q} \\[4pt]
&\le
  \|\omega^{l+\frac{1}{2}} f\|_{L_p^q}
  \Bigg\|
    \int_0^{2\pi}
      g(p_2)\,k(p_3)\,
      \omega_1\,\omega_2^{1/2}\,\omega_3\,
    dp_2
  \Bigg\|_{L_p^\infty} \\[4pt]
&\lesssim
  \|\omega^{l+\frac{1}{2}} f\|_{L_p^q}\,
  \|\omega^{1/2} g\|_{L_p^1}\,
  \|\omega k\|_{L_p^\infty}.
\end{align*}
This completes the proof.
\end{proof}

\medskip

\begin{lemma}\label{lem:L1_bound}
For any \(1 \le q \le \infty\) and \(l \in \mathbb{R}\), the trilinear form 
\(\mathcal{L}_1[f,g,k]\) satisfies the estimate
\begin{equation}\label{bound_L1}
    \|\omega^l \mathcal{L}_1[f,g,k]\|_{L_p^q}
    \lesssim
    \|\omega^{l+\frac{1}{2}} f\|_{L_p^q}\,
    \|\omega^{\frac{1}{2}} k\|_{L_p^1}\,
    \|\omega g\|_{L_p^\infty}.
\end{equation}
\end{lemma}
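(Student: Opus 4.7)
The plan is to deduce estimate \eqref{bound_L1} directly from Lemma~\ref{lem:L2_bound} by invoking the symmetry of the resonant integral under the exchange $p_2 \leftrightarrow p_3$. Returning briefly to the original distributional definitions \eqref{eq:L1} and \eqref{eq:L2}, the two trilinear forms differ only in whether $k$ is evaluated at $p_3$ or at $p_2$. The resonance functions $\Sigma = p_0 + p_1 - p_2 - p_3$ and $\Omega = \omega_0 + \omega_1 - \omega_2 - \omega_3$, together with the product $\prod_{\ell=0}^3 \omega_\ell$, are all invariant under the simultaneous swap $(p_2,\omega_2)\leftrightarrow(p_3,\omega_3)$. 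Relabeling the integration variables in $\mathbb{T}^3$ therefore yields
\[
\mathcal{L}_1[f,g,k](p_0) \;=\; \mathcal{L}_2[f,g,k](p_0)
\]
on the nontrivial resonant manifold. Since the right-hand sides of \eqref{bound_L2} and \eqref{bound_L1} agree up to reordering of factors, the desired inequality \eqref{bound_L1} follows at once from Lemma~\ref{lem:L2_bound}.

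Should one prefer to argue directly from the one-dimensional parametrized form \eqref{eq:L1_param}, the argument parallels the proof of Lemma~\ref{lem:L2_bound}: invoke the lower bound $\sqrt{F_+(p_0,p_2)} \gtrsim \omega_0^{1/2}\omega_2^{1/2}$ from \eqref{F+lower}, pull out $\omega^{l+1/2} f$ in the $L_p^q$ norm, and absorb $\omega_1 g_1$ into $\|\omega g\|_{L_p^\infty}$. It then remains to bound $\int_0^{2\pi} \omega_2^{1/2}\omega_3 k_3 \, dp_2$ in $L_{p_0}^\infty$ by $\|\omega^{1/2} k\|_{L_p^1}$, which requires a change of variables $p_2 \mapsto p_3 = p_0 + h(p_0,p_2) - p_2$ followed by the pointwise bound $\omega_3 \le \omega_3^{1/2}$ on $[0,1]$. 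The anticipated technical obstacle along this route is controlling the Jacobian $|\partial_{p_2} h(p_0,p_2) - 1|$ so that the combined factor $\omega_2^{1/2}/|\partial_{p_2}h-1|$ remains uniformly bounded in $p_0$; this is precisely the analytic content of the $p_2 \leftrightarrow p_3$ symmetry of the resonant parametrization, and the first approach circumvents it entirely by performing the swap at the level of the distributional integral.
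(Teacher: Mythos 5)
Your argument is correct and is essentially the same as the paper's: both deduce the bound from Lemma~\ref{lem:L2_bound} by exploiting the symmetry of $\Sigma$, $\Omega$, and $\prod_\ell\omega_\ell$ under the exchange $p_2\leftrightarrow p_3$, the paper phrasing this as a reparametrization of the resonant manifold by $(p_0,p_3)$ while you perform the relabeling at the level of the distributional integral. The identification $\mathcal{L}_1[f,g,k]=\mathcal{L}_2[f,g,k]$ then makes the estimate immediate, exactly as in the paper.
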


\begin{proof}
The proof is essentially identical to that of Lemma~\ref{lem:L2_bound}.
Indeed, the parametrization of the resonance manifold in this case involves
interchanging the roles of \(p_2\) and \(p_3\):
\[
\begin{cases}
p_1 = h(p_0,p_3),\\[2pt]
p_2 = p_0 + p_1 - p_3.
\end{cases}
\]
With this change of variables, the trilinear form becomes
\[
\mathcal{L}_1[f,g,k]
    = \int_0^{2\pi}
        \frac{\omega_0 \omega_1 \omega_2 \omega_3}
             {\sqrt{F_+(p_0,p_3)}}\,
        f\,g_1\,k_3 \, dp_3,
\]
which coincides with the structure of \(\mathcal{L}_2[f,g,k]\).
Therefore, the bound \eqref{bound_L1} follows directly from 
Lemma~\ref{lem:L2_bound}.
\end{proof}
\medskip
\begin{lemma}\label{lem:G2_alt_bound}
By interchanging the roles of \(p_2\) and \(p_3\) in the parametrization of the resonance manifold,
an alternative bound for \(\mathcal{G}_2[f,g,k]\) holds:
\begin{equation}\label{bound_G2_alt}
    \|\omega^l \mathcal{G}_2[f,g,k]\|_{L_p^q}
    \lesssim
    \|\omega^{l+\frac{1}{2}} f\|_{L_p^q}\,
    \|\omega^{\frac{1}{2}} k\|_{L_p^1}\,
      \|\omega g\|_{L_p^\infty}.
\end{equation}
The proof is identical to that of Lemma~\ref{lem:L1_bound}, since both rely on
the exchange of \(p_2\) and \(p_3\) in the parametrization and the symmetry of
\(\Sigma\) and \(\Omega\) with respect to these variables.
\end{lemma}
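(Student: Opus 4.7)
The plan is to reduce this statement to Lemma~\ref{lem:L2_bound} by re-parametrizing the resonant manifold with $p_3$ as the integration variable instead of $p_2$, just as was done to derive Lemma~\ref{lem:L1_bound} from Lemma~\ref{lem:L2_bound}. The key observation is that both resonance functions
\[
\Sigma(p_0,p_1,p_2,p_3)=p_0+p_1-p_2-p_3, \qquad
\Omega(p_0,p_1,p_2,p_3)=\omega_0+\omega_1-\omega_2-\omega_3
\]
are symmetric in the interchange $p_2\leftrightarrow p_3$, so the distribution $\delta(\Sigma)\delta(\Omega)$ is invariant under this swap. Consequently, the nontrivial resonant manifold admits the dual parametrization $p_1=h(p_0,p_3)$, $p_2=p_0+p_1-p_3$, with Jacobian factor $1/\sqrt{F_+(p_0,p_3)}$.

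With this parametrization, the trilinear form $\mathcal{G}_2[f,g,k]$ becomes
\[
\mathcal{G}_2[f,g,k](p_0)
= \int_0^{2\pi}
   \frac{\omega_0\omega_1\omega_2\omega_3}{\sqrt{F_+(p_0,p_3)}}\,
   f_0\,g_2\,k_3\, dp_3.
\]
Applying the same lower bound as in \eqref{F+lower} but with the roles of $p_2$ and $p_3$ exchanged yields $\sqrt{F_+(p_0,p_3)}\ge 2\,\omega_0^{1/2}\omega_3^{1/2}$. I would then follow the calculation of Lemma~\ref{lem:L2_bound} verbatim: pull the factor $\omega_0^{l+1/2}f_0$ outside the integral in $p_3$, apply Hölder in $p_0$ to extract $\|\omega^{l+1/2}f\|_{L_p^q}$, and bound the remaining $L_p^\infty$ piece
\[
\Bigl\|\int_0^{2\pi}\omega_1\,\omega_2\,\omega_3^{1/2}\,g_2\,k_3\,dp_3\Bigr\|_{L_p^\infty}
\;\lesssim\; \|\omega g\|_{L_p^\infty}\,\|\omega^{1/2}k\|_{L_p^1},
\]
where the factor $\omega_2$ (together with the $L_p^\infty$ on $\omega g$) and the factor $\omega_3^{1/2}$ (together with $L_p^1$ on $\omega^{1/2}k$) are distributed according to which momentum is being integrated.

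The only step requiring any care is the first one: justifying that re-parametrizing by $p_3$ produces the same Jacobian formula $1/\sqrt{F_+(p_0,p_3)}$, which rests on the $(p_2,p_3)$-symmetry of $\Sigma$ and $\Omega$ and the explicit formula \eqref{eq:Fplus} for $F_+$. Beyond this symmetry argument, the proof is a mechanical repetition of Lemma~\ref{lem:L2_bound}, so I expect no genuine obstacle.
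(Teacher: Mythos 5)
Your proposal is correct and follows essentially the same route as the paper: the paper likewise reduces the claim to the $p_2\leftrightarrow p_3$ symmetry of $\Sigma$ and $\Omega$, re-parametrizes with $p_1=h(p_0,p_3)$, $p_2=p_0+p_1-p_3$, and then repeats the Hölder argument of Lemma~\ref{lem:L2_bound} with the lower bound $\sqrt{F_+(p_0,p_3)}\gtrsim\omega_0^{1/2}\omega_3^{1/2}$. No gaps.
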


Now we establish a basic geometric property of the resonance manifold,
which will be useful in the subsequent analysis.
We also record the following useful identity, 
which appears in a similar form in \cite[Lemma~19]{GermainLaMenegaki2024RJ}.

\begin{lemma}\label{lem:resonance_identity}
Let \(p_0,p_2\in[0,2\pi]\), and let \(p_1,p_3\) be determined by
\[
\begin{cases}
    p_3 = p_0 + p_1 - p_2, \\[4pt]
    p_1 = h(p_0,p_2).
\end{cases}
\]
Set \(\omega_j=\omega(p_j)=|\sin(p_j/2)|\).
Then the following resonance bound holds:
\begin{equation}\label{eq:resonance_identity}
    |\omega_1\,\omega_2\,\omega_3|
    \;\lesssim\;
    \omega_0 .
\end{equation}
\end{lemma}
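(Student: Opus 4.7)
The plan is to exploit the explicit parametrization \eqref{eq:h_def} via the change of variables $a := (p_2-p_0)/4$ and $b := (p_0+p_2)/4$, so that $p_0/2 = b-a$ and $p_2/2 = b+a$. The constraint $p_0, p_2 \in [0,2\pi]$ forces $b \pm a \in [0, \pi]$, hence $b \in [|a|, \pi - |a|]$ and $|a| \le \pi/2$; in particular $\omega_0 = \sin(b-a)$ and $\omega_2 = \sin(b+a)$ are both nonnegative, and $|\cos b| \le \cos|a|$. Setting $\phi := \arcsin(\tan|a|\cos b)$, which is well-defined since $\tan^2|a|\cos^2 b \le \sin^2|a| \le 1$, the formula for $h$ gives $p_1/2 = a + \phi$; combined with $p_3 \equiv p_0 + p_1 - p_2 \pmod{2\pi}$, this yields $p_3/2 \equiv \phi - a \pmod{\pi}$. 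Using the invariance $|\sin(\theta + \pi)| = |\sin\theta|$, I obtain $\omega_1 = |\sin(a+\phi)|$ and $\omega_3 = |\sin(\phi-a)|$. Since every subsequent quantity depends only on $|a|$, I will henceforth assume $a \ge 0$.

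The key algebraic step computes $\omega_1\omega_3$ in closed form. Using the identity $\sin(A+B)\sin(A-B) = \sin^2 A - \sin^2 B$ and $\sin^2\phi = \tan^2 a\cos^2 b$, I obtain
\[
\omega_1 \omega_3 = |\sin(\phi+a)\sin(\phi-a)| = |\sin^2\phi - \sin^2 a| = |\tan^2 a\cos^2 b - \sin^2 a| = \tan^2 a \cdot |\cos^2 a - \cos^2 b|.
\]
The classical identity $\cos^2 a - \cos^2 b = \sin(b+a)\sin(b-a)$, together with the positivity of both sines on our range, yields $|\cos^2 a - \cos^2 b| = \omega_0 \omega_2$, which gives the master identity
\[
\omega_1 \omega_2 \omega_3 = \omega_0 \cdot \bigl(\omega_2 \tan a\bigr)^2.
\]

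It then remains to prove the uniform bound $|\omega_2 \tan a| \lesssim 1$, that is, $\sin(b+a)\tan a \lesssim 1$ with $a \in [0, \pi/2)$ and $b+a \in [2a, \pi]$. If $a \le \pi/4$, then both $\tan a \le 1$ and $\sin(b+a) \le 1$, so the product is at most $1$. If $a \in (\pi/4, \pi/2)$, the interval $[2a, \pi]$ lies in $[\pi/2, \pi]$ where $\sin$ is decreasing, so $\sin(b+a) \le \sin(2a) = 2\sin a\cos a$ and hence $\sin(b+a)\tan a \le 2\sin^2 a \le 2$. Combining, $|\omega_2 \tan a| \le 2$ and $\omega_1\omega_2\omega_3 \le 4\,\omega_0$, which is the claimed bound. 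The main technical obstacle is the algebraic reduction of $\omega_1 \omega_3$ from the $\arcsin$--based parametrization, including the careful treatment of the sign of $\cos\phi$ and of the $2\pi$--reduction for $p_3$; once the master identity is in hand, the remaining estimate reduces to an elementary two-regime trigonometric argument.
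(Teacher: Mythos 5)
Your proposal is correct in substance, and it takes a genuinely different route from the paper: the paper does not prove this lemma at all, but simply records it as a consequence of \cite[Lemma~19]{GermainLaMenegaki2024RJ}, whereas you give a self-contained derivation from the explicit formula \eqref{eq:h_def}. In fact your ``master identity'' $\omega_1\omega_3=\tan^2\!\big(\tfrac{p_2-p_0}{4}\big)\,\omega_0\omega_2$ is precisely the paper's Lemma~\ref{lem:resonance_tan_identity} (also only cited, not proved), so your argument simultaneously establishes both borrowed facts; the computation via $a=(p_2-p_0)/4$, $b=(p_0+p_2)/4$, the identity $\sin(\phi+a)\sin(\phi-a)=\sin^2\phi-\sin^2 a$ with $\sin^2\phi=\tan^2|a|\cos^2 b$, and $\cos^2 a-\cos^2 b=\sin(b+a)\sin(b-a)=\omega_0\omega_2$ is clean and checks out, including the $\pi$-periodicity of $|\sin|$ that handles the $2\pi$-reduction of $p_3$. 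What your approach buys is a transparent closed form, $\omega_1\omega_2\omega_3=\omega_0\,(\omega_2\tan|a|)^2$, which reduces the lemma to the elementary bound $\sin(b+a)\,|\tan a|\le 2$.

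One small repair is needed: the reduction ``WLOG $a\ge 0$'' is not justified as stated. While $\omega_1\omega_3$ and $\omega_0\omega_2$ depend only on $|a|$, the individual factors $\omega_0=\sin(b-a)$ and $\omega_2=\sin(b+a)$ do not, and the conclusion $\omega_1\omega_2\omega_3\lesssim\omega_0$ is not symmetric under swapping $p_0$ and $p_2$ (the swap exchanges $\omega_0$ with $\omega_2$). So the quantity you must bound, $\sin(b+a)\,\tan|a|$, genuinely depends on the sign of $a$. The fix is one line: for $a<0$ set $a'=|a|$, note $b+a=b-a'\in[0,\pi-2a']$, and run the same two-regime argument ($\tan a'\le1$ for $a'\le\pi/4$; for $a'>\pi/4$ use that $\sin$ is increasing on $[0,\pi/2]$ to get $\sin(b-a')\le\sin(\pi-2a')=\sin 2a'$, whence the product is $\le 2\sin^2 a'\le 2$). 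With that addition the proof is complete; you may also wish to remark that the degenerate corner $|a|=\pi/2$ (i.e.\ $(p_0,p_2)=(0,2\pi)$ or $(2\pi,0)$) is reached only as a limit, consistent with how the paper treats it in Lemma~\ref{lem:jacobian_bound}.
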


\medskip

\begin{lemma}\label{lem:G1_Linf_bound}
Let \(l \in \mathbb{R}\). The trilinear form \(\mathcal{G}_1[f,g,k]\) satisfies the following weighted $L_p^\infty$estimates:

\medskip

\noindent
(i) For \(l \ge -\tfrac{1}{2}\),
\begin{equation}\label{eq:G1_Linf_bound_case1}
    \|\omega^l \mathcal{G}_1[f,g,k]\|_{L_p^\infty}
    \lesssim
    \|\omega f\|_{L_p^\infty}\,
    \|\omega^{\frac{1}{2}} g\|_{L_p^1}\,
    \|\omega k\|_{L_p^\infty},
\end{equation}
and
\medskip

\noindent
(ii) For \(l < -\tfrac{1}{2}\),
\begin{equation}\label{eq:G1_Linf_bound_case3}
    \|\omega^l \mathcal{G}_1[f,g,k]\|_{L_p^\infty}
    \lesssim
    \|\omega^{l+\frac{3}{2}} f\|_{L_p^\infty}\,
    \|\omega^{l+1} g\|_{L_p^1}\,
    \|\omega^{l+\frac{3}{2}} k\|_{L_p^\infty},
\end{equation}
\end{lemma}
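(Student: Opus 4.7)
My plan is to estimate $\mathcal{G}_1$ starting from its parametrized form
\eqref{eq:G1_param} and to control the kernel pointwise by combining two
ingredients: the lower bound $\sqrt{F_+(p_0,p_2)}\gtrsim
\omega_0^{1/2}\omega_2^{1/2}$ coming out of \eqref{F+lower}, and the
resonance identity $\omega_1\omega_2\omega_3\lesssim \omega_0$ from
Lemma~\ref{lem:resonance_identity}. The only real question is how to
distribute the four explicit factors $\omega_0,\omega_1,\omega_2,\omega_3$
in the kernel across the three test factors $f_1,g_2,k_3$. The two cases
in the statement correspond to whether the power of $\omega_0$ that
appears as a prefactor after this distribution is non-negative (and can
be absorbed by $\omega_0\le 1$) or negative (and must be cancelled via
the resonance identity).

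For case (i), $l\ge -\tfrac12$, the approach is direct. I would bound
$|f_1|\le \omega_1^{-1}\|\omega f\|_{L_p^\infty}$ and $|k_3|\le
\omega_3^{-1}\|\omega k\|_{L_p^\infty}$, cancelling $\omega_1$ and
$\omega_3$ against the corresponding factors in the numerator. Using
$\omega_0\omega_2/\sqrt{F_+}\lesssim \omega_0^{1/2}\omega_2^{1/2}$
reduces the $p_2$-integral to $\|\omega^{1/2}g\|_{L_p^1}$, and the
remaining prefactor $\omega_0^{\,l+1/2}$ is bounded by $1$ precisely
because $l+\tfrac12\ge 0$. This yields \eqref{eq:G1_Linf_bound_case1}.

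For case (ii), $l<-\tfrac12$, the same distribution produces a divergent
factor $\omega_0^{\,l+1/2}$ as $\omega_0\to 0$. To fix this I would use
weaker weights on $f$ and $k$, writing $|f_1|\le
\omega_1^{-(l+3/2)}\|\omega^{l+3/2}f\|_{L_p^\infty}$ and similarly for
$k_3$. After the lower bound on $\sqrt{F_+}$ is applied, the remaining
integrand is $\omega_0^{\,l+1/2}\,\omega_2^{1/2}\,\omega_1^{\beta}
\omega_3^{\beta}|g_2|$ with $\beta:=-(l+\tfrac12)>0$. Raising the
resonance identity to the positive power $\beta$ gives
$\omega_1^{\beta}\omega_3^{\beta}\lesssim(\omega_0/\omega_2)^{\beta}$,
which trades the $\omega_1,\omega_3$ factors for a factor of
$\omega_0^{\beta}$ that exactly cancels the outer $\omega_0^{\,l+1/2}$.
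The $p_2$ integrand then reduces to $\omega_2^{\,l+1}|g_2|$, producing
$\|\omega^{l+1}g\|_{L_p^1}$ and hence \eqref{eq:G1_Linf_bound_case3}.

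The main obstacle is case (ii): since $\mathcal{G}_1$ lacks the
$f_0$-type cancellation enjoyed by the loss terms and by $\mathcal{G}_2$,
one cannot simply rely on the lower bound for $\sqrt{F_+}$ once $l$ is
sufficiently negative. The resonance identity is the only geometric
handle available to exchange powers of $\omega_1,\omega_3$ against
powers of $\omega_0$, and the precise choice $\beta=-(l+\tfrac12)$ is
exactly what matches the singularity of $\sqrt{F_+}$ and forces the
$p_2$ integrand into the weighted $L_p^1$ norm stated in the lemma. The
only point requiring some care is verifying that raising the resonance
identity to the fractional power $\beta$ is legitimate, which follows
from nonnegativity of all factors and the fact that the set
$\{\omega_2=0\}$ is negligible for the original integral.
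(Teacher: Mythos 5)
Your proposal is correct and follows essentially the same route as the paper: Case (i) uses the lower bound on $\sqrt{F_+}$ together with $\omega_0^{\,l+1/2}\le 1$, and Case (ii) uses the resonance identity of Lemma~\ref{lem:resonance_identity} raised to the power $-(l+\tfrac12)$ to trade the singular $\omega_0$-prefactor for the weights $\omega_1^{l+3/2}\omega_2^{l+1}\omega_3^{l+3/2}$, exactly as in the paper (which applies the identity to $\omega_0^{\,l+1/2}$ directly rather than to the residual powers of $\omega_1,\omega_3$ — an algebraically equivalent bookkeeping choice).
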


\begin{proof}


\medskip
\noindent
\textbf{Case 1:} \(l \ge -\tfrac{1}{2}\).  
Using the lower bound \eqref{F+lower}, we obtain
\begin{align*}
|\omega^l \mathcal{G}_1[f,g,k]|
&= \Bigg|\omega_0^l
    \int_0^{2\pi}
        \frac{\omega_0\omega_1\omega_2\omega_3}
             {\sqrt{F_+(p_0,p_2)}}\, f_1 g_2 k_3\, dp_2
  \Bigg|\\[4pt]
&\lesssim
  \int_0^{2\pi}
        \big|\omega_0^{l+\frac{1}{2}}\omega_1\omega_2^{\frac{1}{2}}\omega_3
        f_1 g_2 k_3\big|\, dp_2.
\end{align*}
Since \(\omega_0^{l+\frac{1}{2}}\le 1\) if \(l\ge -\frac{1}{2}\), we obtain 
\begin{align*}
|\omega^l \mathcal{G}_1[f,g,k]|
&\lesssim 
  \int_0^{2\pi}
      |\omega_1\omega_2^{\frac{1}{2}}\omega_3
       f_1 g_2 k_3|\, dp_2
\end{align*}
Applying Hölder’s inequality in $p_2$ and using the boundedness of the weights then yields
\begin{align*}
|\omega^l \mathcal{G}_1[f,g,k]|
&\lesssim
  \|\omega  f\|_{L_p^\infty}\,
  \|\omega^{\frac{1}{2}} g\|_{L_p^1}\,
  \|\omega k\|_{L_p^\infty}
  .
\end{align*}

\medskip
\noindent
\textbf{Case 2:} \(l < -\tfrac{1}{2}\).  
Using Lemma~\ref{lem:resonance_identity}, which gives 
\(|\omega_1\omega_2\omega_3|\lesssim \omega_0\),
and the lower bound \eqref{F+lower},
we obtain
\begin{align*}
|\omega^l \mathcal{G}_1[f,g,k]|
&\lesssim
  \int_0^{2\pi}
      |\omega_1^{l+\frac{3}{2}}\omega_2^{l+1}\omega_3^{l+\frac{3}{2}}
       f_1 g_2 k_3|\, dp_2\\[4pt]
&\le
  \|\omega^{l+\frac{3}{2}} f\|_{L_p^\infty}\,
  \|\omega^{l+1} g\|_{L_p^1}\,
  \|\omega^{l+\frac{3}{2}} k\|_{L_p^\infty}.
\end{align*}
This completes the proof.
\end{proof}
We recall the following trigonometric identity, which will be used later 
(see \cite[Lemma~19]{GermainLaMenegaki2024RJ}).

\begin{lemma}\label{lem:resonance_tan_identity}
Let \(p_0,p_2\in[0,2\pi]\), and let \(p_1,p_3\) be determined by
\[
\begin{cases}
    p_3 = p_0 + p_1 - p_2, \\[4pt]
    p_1 = h(p_0,p_2)
\end{cases}
\]
Set \(\omega_j=\omega(p_j)=|\sin(p_j/2)|\).
Then the following resonance identity holds:
\begin{equation}\label{eq:tan_identity}
    |\omega_1\,\omega_3|
    = \tan^2\!\left(\frac{p_2 - p_0}{4}\right)
      \omega_2\,\omega_0.
\end{equation}
\end{lemma}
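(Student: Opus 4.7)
The plan is to reduce the identity to a short trigonometric computation by parametrizing $(p_0,p_2)$ additively. Set $a := \tfrac{p_2-p_0}{4}$ and $b := \tfrac{p_0+p_2}{4}$, so that $p_0/2 = b-a$ and $p_2/2 = b+a$. From the defining formula \eqref{eq:h_def} one reads off $h(p_0,p_2) = 2a + 2\theta$, where
$$\theta \;:=\; \arcsin\!\bigl(\tan|a|\,\cos b\bigr),
\qquad \text{so that } \sin^2\theta = \tan^2 a\,\cos^2 b.$$
Hence $p_1/2 = a + \theta$, and combining with the sum constraint $p_3 = p_0 + p_1 - p_2$ gives $p_3/2 = \theta - a$. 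The structural point that makes everything work is that $p_1/2$ and $p_3/2$ are the \emph{sum} and \emph{difference} of the same two angles $\theta$ and $a$.

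With this representation in hand, I will apply the product-to-sum identity $\sin(\theta+a)\sin(\theta-a) = \sin^2\theta - \sin^2 a$. Substituting $\sin^2\theta = \tan^2 a\,\cos^2 b$ and factoring yields
$$\sin(p_1/2)\sin(p_3/2) \;=\; \tan^2 a\,\cos^2 b - \sin^2 a \;=\; \tan^2(a)\bigl(\cos^2 b - \cos^2 a\bigr).$$
A second application of the same identity in the form $\cos^2 b - \cos^2 a = \sin^2 a - \sin^2 b = -\sin(b-a)\sin(b+a) = -\sin(p_0/2)\sin(p_2/2)$ rewrites the right-hand side as $-\tan^2(a)\,\sin(p_0/2)\sin(p_2/2)$. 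Taking absolute values and recalling $\omega_j = |\sin(p_j/2)|$ gives \eqref{eq:tan_identity}, the evenness of $\tan^2$ absorbing the $|p_2-p_0|$.

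I do not expect a real obstacle. The only bookkeeping item is verifying the clean sum/difference form $p_1/2 = a+\theta$, $p_3/2 = \theta - a$, which is immediate from \eqref{eq:h_def} once one reduces modulo $2\pi$ is understood as acting inside $\mathbb{T}$. The sign discrepancy between $\sin(p_1/2)\sin(p_3/2)$ and $\sin(p_0/2)\sin(p_2/2)$ is harmless because the statement involves absolute values, and is in any case consistent with $p_0,p_2 \in [0,2\pi]$ forcing $\sin(p_0/2)\sin(p_2/2)\ge 0$ while $\cos^2 b \le \cos^2 a$ in the relevant regime.
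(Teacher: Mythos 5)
Your computation is correct, and it is worth noting that the paper itself supplies no proof of this lemma: it is stated as a recalled identity with a citation to \cite[Lemma~19]{GermainLaMenegaki2024RJ}. So your self-contained derivation is genuinely additional content rather than a rephrasing of the paper's argument. The key steps all check out: with $a=\tfrac{p_2-p_0}{4}$, $b=\tfrac{p_0+p_2}{4}$ one indeed has $p_0/2=b-a$, $p_2/2=b+a$, and from \eqref{eq:h_def} the clean form $p_1/2=a+\theta$, $p_3/2=\theta-a$ with $\sin\theta=\tan|a|\cos b$; then
\[
\sin\tfrac{p_1}{2}\sin\tfrac{p_3}{2}=\sin^2\theta-\sin^2 a=\tan^2 a\,(\cos^2 b-\cos^2 a)=-\tan^2 a\,\sin\tfrac{p_0}{2}\sin\tfrac{p_2}{2},
\]
and taking absolute values gives \eqref{eq:tan_identity}. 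Two housekeeping points you could make explicit: (i) $\theta$ is well defined because $|\tan(\tfrac{|p_2-p_0|}{4})\cos(\tfrac{p_0+p_2}{4})|\le 1$ on $[0,2\pi]^2$ (this is the elementary bound the paper quotes from \cite[Lemma~12]{GermainLaMenegaki2024RJ} in the proof of Lemma~\ref{lem:jacobian_bound}); and (ii) the reduction of $p_1,p_3$ modulo $2\pi$ is harmless since $\omega(p)=|\sin(p/2)|$ is $2\pi$-periodic. With those remarks the argument is complete.
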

\begin{lemma}\label{lem:G1_L1_bounds}
Let \(l \in \mathbb{R}\). The trilinear form \(\mathcal{G}_1[f,g,k]\) satisfies the following weighted \(L_p^1\) estimates.

\medskip
\noindent
(i) For \(l \ge -\frac{1}{2}\),
\begin{equation}\label{eq:G1_L1_case1}
    \|\omega^l \mathcal{G}_1[f,g,k]\|_{L_p^1}
    \lesssim
    \|f\|_{L_p^1}\,
    \|\omega^{\frac{1}{2}} g\|_{L_p^\infty}\,
    \|k\|_{L_p^1}.
\end{equation}
\medskip
\noindent
(ii) For \(l <-\frac{1}{2}\),
\begin{equation}\label{eq:G1_L1_case3}
    \|\omega^l \mathcal{G}_1[f,g,k]\|_{L_p^1}
    \lesssim
    \|\omega^{l+\frac{1}{2}} f\|_{L_p^1}\,
    \|\omega^{l+1} g\|_{L_p^\infty}\,
    \|\omega^{l+\frac{1}{2}} k\|_{L_p^1}.
\end{equation}
\end{lemma}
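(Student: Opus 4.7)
The plan is to mirror the $L^\infty_p$ analysis of Lemma~\ref{lem:G1_Linf_bound}, exchanging the roles of $L^1_p$ and $L^\infty_p$ norms carried by $f,k$ versus $g$. Starting from \eqref{eq:G1_param} and applying Fubini,
\[
   \|\omega^l \mathcal{G}_1[f,g,k]\|_{L_p^1}
   = \iint_{[0,2\pi]^2}
        \omega_0^{l}\,\frac{\omega_0\omega_1\omega_2\omega_3}{\sqrt{F_+(p_0,p_2)}}
        \,|f_1 g_2 k_3|\,dp_0\,dp_2,
\]
with $p_1 = h(p_0,p_2)$ and $p_3 = p_0 + p_1 - p_2$. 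The lower bound $\sqrt{F_+}\gtrsim \omega_0^{1/2}\omega_2^{1/2}$ of \eqref{F+lower} gives the pointwise majorant $\omega_0^{l+1/2}\omega_1\omega_2^{1/2}\omega_3\,|f_1 g_2 k_3|$, which we split according to the sign of $l+\tfrac12$.

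In Case~(i), $l+\tfrac12\ge 0$ gives $\omega_0^{l+1/2}\le 1$; I extract $\omega_2^{1/2}|g_2|\le \|\omega^{1/2}g\|_{L_p^\infty}$ and reduce matters to the free-variable inequality $\iint \omega_1\omega_3\,|f_1 k_3|\,dp_0\,dp_2\lesssim \|f\|_{L_p^1}\|k\|_{L_p^1}$. In Case~(ii), $l+\tfrac12<0$, so I invoke the resonance identity $\omega_1\omega_2\omega_3\lesssim \omega_0$ of Lemma~\ref{lem:resonance_identity} raised to the \emph{negative} power $l+\tfrac12$ (which reverses the inequality), giving $\omega_0^{l+1/2}\lesssim(\omega_1\omega_2\omega_3)^{l+1/2}$ and the refined majorant $\omega_1^{l+3/2}\omega_2^{l+1}\omega_3^{l+3/2}|f_1 g_2 k_3|$; extracting $\omega_2^{l+1}|g_2|\le \|\omega^{l+1}g\|_{L_p^\infty}$ reduces the claim to $\iint \omega_1^{l+3/2}\omega_3^{l+3/2}|f_1 k_3|\,dp_0\,dp_2\lesssim \|\omega^{l+1/2}f\|_{L_p^1}\|\omega^{l+1/2}k\|_{L_p^1}$.

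The decisive step common to both cases is a symmetric reparametrization of the nontrivial resonance manifold. Since $\Sigma$ and $\Omega$ are invariant under the involution $(p_0,p_1,p_2,p_3)\mapsto(p_1,p_0,p_3,p_2)$, the construction \eqref{eq:h_def} parametrizes the manifold equally well through the pair $(p_1,p_3)$ with Jacobian $\sqrt{F_+(p_1,p_3)}$, so the intrinsic measure underlying the reduction that produced \eqref{eq:collision_param} satisfies
\[
   \frac{dp_0\,dp_2}{\sqrt{F_+(p_0,p_2)}}
   \;=\;
   \frac{dp_1\,dp_3}{\sqrt{F_+(p_1,p_3)}}.
\]
Combined with the trivial upper bound $F_+(p_0,p_2)\lesssim 1$ and the lower bound $F_+(p_1,p_3)\gtrsim \omega_1\omega_3$, this yields $dp_0\,dp_2\lesssim \omega_1^{-1/2}\omega_3^{-1/2}\,dp_1\,dp_3$ on the manifold, which converts both residual integrals into products of weighted $L_p^1$ norms of $f$ and $k$. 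The excess weights $\omega^{1/2}$ that appear are then dropped via Lemma~\ref{lem:weight-properties}(ii), which delivers exactly the stated bounds. The main technical hurdle I expect is a clean justification of the intrinsic measure identity above, including handling the multivaluedness of $h$ away from trivial resonances; this is the direct $L^1$-analogue of the geometric reduction already used to derive \eqref{eq:collision_param}, so no new geometric input beyond Lemmas~\ref{lem:resonance_identity}--\ref{lem:resonance_tan_identity} should be required.
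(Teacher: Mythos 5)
Your overall skeleton coincides with the paper's: Fubini, the lower bound \eqref{F+lower}, the case split on the sign of $l+\tfrac12$, Lemma~\ref{lem:resonance_identity} for $l<-\tfrac12$, a change of variables to $(p_1,p_3)$, and H\"older. Where you genuinely diverge is in how the Jacobian of $(p_0,p_2)\mapsto(p_1,p_3)$ is controlled. The paper computes $\det J$ in closed form by differentiating $h$ (Lemma~\ref{lem:jacobian_structure}, formula \eqref{eq:jacobian_formula}) and then proves the sharp bound $|\omega_1\omega_3/\det J|\lesssim 1$ by a case analysis near the four corner singularities, using the identity \eqref{eq:tan_identity} (Lemma~\ref{lem:jacobian_bound}). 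You instead invoke the swap symmetry of $\Sigma,\Omega$ under $(p_0,p_2)\leftrightarrow(p_1,p_3)$ to assert the intrinsic-measure identity $dp_0\,dp_2/\sqrt{F_+(p_0,p_2)}=dp_1\,dp_3/\sqrt{F_+(p_1,p_3)}$, which via $F_+\lesssim1$ and $\sqrt{F_+(p_1,p_3)}\gtrsim\omega_1^{1/2}\omega_3^{1/2}$ yields only $\omega_1\omega_3\,dp_0\,dp_2\lesssim\omega_1^{1/2}\omega_3^{1/2}\,dp_1\,dp_3$ — strictly weaker than Lemma~\ref{lem:jacobian_bound}, but sufficient here because the leftover $\omega^{1/2}$ factors are harmless by \eqref{eq:weight-monotonicity}. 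If justified, your route is softer and more conceptual (it recycles the co-area reduction behind \eqref{eq:collision_param} rather than redoing trigonometry), at the price of a lossy Jacobian bound that would not substitute for Lemma~\ref{lem:jacobian_bound} elsewhere (e.g.\ it does not suffice for the $L^2$ argument of Lemma~\ref{lem:G1_L2_bound}, which needs the one-dimensional partial changes of variables $p_0\mapsto p_1$ and $p_0\mapsto p_3$).

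The one real reservation is that the measure identity — the decisive step, as you say yourself — is asserted rather than proved. It is true (expressing the co-area measure $\delta(\Sigma)\delta(\Omega)\,d^4p$ in the two coordinate charts $(p_0,p_2)$ and $(p_1,p_3)$ and using that $|\partial_{p_1}\Omega|$ on the resonant manifold is proportional to $\sqrt{F_+(p_0,p_2)}$, together with the swap invariance), but making this rigorous requires either rerunning the Lukkarinen--Spohn delta-resolution in the $(p_1,p_3)$ chart and matching branches, or verifying the trigonometric identity $|\det J|=\sqrt{F_+(p_0,p_2)/F_+(p_1,p_3)}$ against \eqref{eq:jacobian_formula}. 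Either way the work is comparable to the paper's Lemmas~\ref{lem:jacobian_structure}--\ref{lem:jacobian_bound}, so your claim that no geometric input beyond Lemmas~\ref{lem:resonance_identity}--\ref{lem:resonance_tan_identity} is needed is optimistic: the Jacobian control is genuinely new input in either formulation, and in your write-up it is the one step still owed.
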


The proof of Lemma~\ref{lem:G1_L1_bounds} proceeds in three steps.
First, we compute the Jacobian associated with the change of variables
$(p_0,p_2)\mapsto(p_1,p_3)$ on the resonance manifold, which captures the geometric contribution precisely. 
Second, we establish a quantitative bound on the combined Jacobian–weight
factor, showing that the singularity introduced by the transformation remains
uniformly controlled.  
Finally, using this bound together with the resonance properties derived
earlier, we estimate $\mathcal{G}_1[f,g,k]$ in the weighted $L^1$ norm,
completing the proof.
\medskip

The most delicate part of the analysis concerns the trilinear form
$\mathcal{G}_1[f,g,k]$, which represents the dominant gain contribution
in the collision operator. 
Unlike $\mathcal{L}_1$, $\mathcal{L}_2$, or $\mathcal{G}_2$, the term
$\mathcal{G}_1$ exhibits a more intricate dependence on the resonance geometry,
and its singular structure requires a careful treatment of the associated
Jacobian factor.

\vspace{1em}
\begin{lemma}[Jacobian structure]\label{lem:jacobian_structure}
Consider the change of variables
\[
(p_0,\,p_2)\longmapsto (p_1,\,p_3),
\qquad \]
\text{with}\quad
\[\begin{cases}
p_3 = p_0 + p_1 - p_2, \\[3pt]
p_1 = h(p_0,p_2),
\end{cases}
\]
where $h$ is explicitly given in \eqref{eq:h_def}.
Then the Jacobian determinant satisfies
\begin{align}
\det J&:= \det\frac{\partial(p_1,p_3)}{\partial(p_0,p_2)}
  =\frac{\sin\!\big(\tfrac{p_0+p_2}{4}\big)\,
          \tan\!\big(\tfrac{|p_0-p_2|}{4}\big)}
        {\sqrt{1-\cos^2\!\big(\tfrac{p_0+p_2}{4}\big)
                  \tan^2\!\big(\tfrac{|p_0-p_2|}{4}\big)}}.
\label{eq:jacobian_formula}
\end{align}
\end{lemma}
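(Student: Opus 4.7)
My plan is to compute the $2\times 2$ Jacobian directly from the explicit formula for $h$ in \eqref{eq:h_def}, exploiting the simple algebraic structure of the constraint $p_3 = p_0 + p_1 - p_2$ to reduce the calculation to computing only the two partials of $p_1$.

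First, I would freeze the sign of $p_2-p_0$ (say $p_2 > p_0$; the other case is identical by symmetry) so that $h(p_0,p_2) = \tfrac{p_2-p_0}{2} + 2\arcsin(A)$, where
\[
A \;=\; \tan\!\Big(\tfrac{p_2-p_0}{4}\Big)\cos\!\Big(\tfrac{p_2+p_0}{4}\Big).
\]
Introduce the shorthand $\phi=\tfrac{p_2-p_0}{4}$, $\psi=\tfrac{p_2+p_0}{4}$, and $B=\sqrt{1-A^{2}}$. Writing $P=\sec^{2}\phi\cos\psi$ and $Q=\tan\phi\sin\psi$, a routine chain rule gives
\[
\partial_{p_0}p_1 = -\tfrac{1}{2} - \tfrac{P+Q}{2B}, \qquad
\partial_{p_2}p_1 = \tfrac{1}{2} + \tfrac{P-Q}{2B}.
\]
Since $p_3 = p_0 + p_1 - p_2$, one obtains immediately
\[
\partial_{p_0}p_3 = 1 + \partial_{p_0}p_1 = \tfrac{1}{2} - \tfrac{P+Q}{2B}, \qquad
\partial_{p_2}p_3 = -1 + \partial_{p_2}p_1 = -\tfrac{1}{2} + \tfrac{P-Q}{2B}.
\]

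Next, I would form the determinant. Setting $u=\tfrac{P+Q}{2B}$ and $v=\tfrac{P-Q}{2B}$, expansion of the two products yields
\[
(\partial_{p_0}p_1)(\partial_{p_2}p_3) - (\partial_{p_2}p_1)(\partial_{p_0}p_3)
\;=\; \big(\tfrac14 - \tfrac{v}{2} + \tfrac{u}{2} - uv\big) - \big(\tfrac14 - \tfrac{u}{2} + \tfrac{v}{2} - uv\big) \;=\; u-v.
\]
The crucial observation is that the $\tfrac14$ and the $uv$ terms cancel pairwise, leaving the clean identity $\det J = u - v = \tfrac{Q}{B}$. Substituting back $Q$ and $B$ yields exactly
\[
\det J \;=\; \frac{\tan\phi\,\sin\psi}{\sqrt{1-\cos^{2}\psi\,\tan^{2}\phi}},
\]
which is \eqref{eq:jacobian_formula}. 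The case $p_2<p_0$ is handled by noting that replacing $\phi$ by $-\phi$ changes the sign of $\tan\phi$ and leaves $\cos^{2}\psi\tan^{2}\phi$ invariant, so the absolute value $|p_0-p_2|/4$ appears naturally in both the numerator factor $\tan(|p_0-p_2|/4)$ and the denominator.

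I do not expect a genuine obstacle; the only mild subtlety is bookkeeping of the derivative of $\arcsin$, but the cancellation that produces $\det J = u-v$ is structural and compresses the potentially long algebraic expression into the stated one-line formula. The identity $\partial_{p_0}p_3 = 1+\partial_{p_0}p_1$, coming from the linear constraint $p_3 = p_0+p_1-p_2$, is what makes this reduction so clean, and it is the step I would emphasize at the outset.
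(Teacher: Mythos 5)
Your computation is correct and follows essentially the same route as the paper: both exploit the linear relation $p_3=p_0+p_1-p_2$ to collapse the $2\times 2$ determinant to $-\partial_{p_0}h-\partial_{p_2}h$ (your cancellation $\det J=u-v$ is exactly this identity in the $(u,v)$ shorthand), and then differentiate $h$ directly to obtain \eqref{eq:jacobian_formula}. The bookkeeping of the $\arcsin$ derivative and the sign discussion for $p_2<p_0$ match the paper's treatment.
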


\begin{proof}
A direct calculation gives that
\begin{align}
\det J
  &=\begin{vmatrix}
    \partial_{p_0} h(p_0,p_2)& \partial_{p_2} h(p_0,p_2) \\[1em]
    \partial_{p_0} h(p_0,p_2)+1& \partial_{p_2} h(p_0,p_2)-1
\end{vmatrix}\nonumber\\
   &=-\partial_{p_0}h(p_0,p_2)-\partial_{p_2}h(p_0,p_2).
\label{eq:jacobian_formula1}
\end{align}
Differentiating $h(p_0,p_2)$ defined in \eqref{eq:h_def}, one computes
\[
\partial_{p_0} h(p_0,p_2)
 = \frac{2\!\left(
 -\tfrac{1}{4}\sin(\frac{p_0+p_2}{4})\tan(\frac{|p_0-p_2|}{4})
 -\tfrac{(p_2-p_0)\cos\!\frac{p_0+p_2}{4}
   \sec^2\!\frac{|p_0-p_2|}{4}}{4|p_2-p_0|}
  \right)}
 {\sqrt{1-\cos^2\!\frac{p_0+p_2}{4}
  \tan^2\!\frac{|p_0-p_2|}{4}}}
 - \frac{1}{2},
\]
and similarly
\[
\partial_{p_2} h(p_0,p_2)
 = \frac{2\!\left(
  \tfrac{(p_2-p_0)\cos\!\frac{p_0+p_2}{4}
   \sec^2\!\frac{|p_0-p_2|}{4}}{4|p_2-p_0|}
  -\tfrac{1}{4}\sin(\frac{p_0+p_2}{4})\tan(\frac{|p_0-p_2|}{4})
 \right)}
 {\sqrt{1-\cos^2\!\frac{p_0+p_2}{4}
  \tan^2\!\frac{|p_0-p_2|}{4}}}
 + \frac{1}{2}.
\]
Substituting these into \eqref{eq:jacobian_formula1} gives the claimed closed form \eqref{eq:jacobian_formula}.
\end{proof}

\begin{lemma}[Jacobian-weight interaction]\label{lem:jacobian_bound}
For any \(l\in\mathbb{R}\) and $p_0,p_2\in[0,2\pi]$ on the resonance manifold,
the following bound holds:
\begin{equation}\label{eq:jacobian_weight_bound}
\left|
\frac{\omega_1\omega_3}
     {\det J}
\right|
\lesssim 
1.
\end{equation}
\end{lemma}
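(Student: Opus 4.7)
The plan is to combine the explicit formula for $\det J$ from Lemma~\ref{lem:jacobian_structure} with the resonance identity $|\omega_1\omega_3|=\tan^2\!\big(\tfrac{p_2-p_0}{4}\big)\,\omega_0\omega_2$ of Lemma~\ref{lem:resonance_tan_identity}, and to reduce the whole question to a single elementary trigonometric inequality. The exponent $l$ appearing in the statement plays no role, as it does not enter the right-hand side of \eqref{eq:jacobian_weight_bound}.

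Introduce the shorthand $a=\tfrac{p_0+p_2}{4}\in[0,\pi]$ and $b=\tfrac{|p_0-p_2|}{4}\in[0,\pi/2]$. Plugging the two formulas into the ratio yields
\[
\frac{|\omega_1\omega_3|}{|\det J|}
=\frac{\omega_0\,\omega_2}{\sin(a)}\;\tan(b)\;\sqrt{1-\cos^2(a)\tan^2(b)}.
\]
The square-root factor is well defined and bounded by $1$ precisely because $(p_0,p_2)$ lies on the resonance manifold, i.e.\ the argument of $\arcsin$ in the definition \eqref{eq:h_def} of $h$ must lie in $[-1,1]$. The heart of the matter is therefore the single ratio $\omega_0\omega_2/\sin(a)$, and the key claim I would establish is
\[
\frac{\omega_0\,\omega_2}{\sin(a)}\;\le\;\cos(b).
\]
This would follow from the product-to-sum identity $2\sin(a)\cos(b)=\sin(p_0/2)+\sin(p_2/2)=\omega_0+\omega_2$ together with the elementary inequality $xy\le\tfrac12(x+y)$ for $x,y\in[0,1]$, which is simply the rewriting $x(1-y)+y(1-x)\ge0$.

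Once this is in hand, the estimate closes immediately: $\cos(b)\tan(b)=\sin(b)\le 1$, so $|\omega_1\omega_3|/|\det J|\le\sin(b)\sqrt{1-\cos^2(a)\tan^2(b)}\le 1$. The only point that calls for a word of comment is the degenerate set $\{\sin(a)=0\}$, which corresponds to $p_0=p_2\in\{0,2\pi\}$; there one also has $b=0$, the parameterization forces $\omega_1=\omega_3=0$ and $\det J=0$, and the bound is recovered by continuity from the estimate on its complement. I do not expect any substantial analytic obstacle: the apparent singularity of $\tan(b)$ as $b\to\pi/2$ is automatically absorbed, since on the resonance manifold it can only occur jointly with $\cos(a)\to 0$, which through the constraint $|\cos(a)\tan(b)|\le 1$ forces $\omega_0\omega_2\to 0$ at a comparable rate.
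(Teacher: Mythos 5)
Your proof is correct, and it takes a genuinely different route from the paper's at the decisive step. Both arguments start the same way: insert the closed form of $\det J$ from Lemma~\ref{lem:jacobian_structure} and the identity $|\omega_1\omega_3|=\tan^2(b)\,\omega_0\omega_2$ of Lemma~\ref{lem:resonance_tan_identity} (with $a=\tfrac{p_0+p_2}{4}$, $b=\tfrac{|p_0-p_2|}{4}$), reducing everything to the boundedness of $\omega_0\omega_2\tan(b)/\sin(a)$ (the paper discards the factor $\sqrt{1-\cos^2 a\,\tan^2 b}\le 1$ at the outset; you keep it, which only helps). Where the paper then performs a local asymptotic case analysis near the four corner points $(0,0)$, $(2\pi,2\pi)$, $(0,2\pi)$, $(2\pi,0)$, importing the asymptotics $|\omega_1\omega_3|\sim \epsilon s/(\epsilon^2+s^2)$ and $|\tan(\tfrac{p_0-p_2}{4})|\sim(\epsilon^2+s^2)^{-1/2}$ from \cite[Lemmas~18--19]{GermainLaMenegaki2024RJ}, you instead prove the global pointwise inequality $\omega_0\omega_2\le\sin(a)\cos(b)$ via the sum-to-product identity $\omega_0+\omega_2=2\sin(a)\cos(b)$ and $xy\le\tfrac12(x+y)$ on $[0,1]^2$, which yields the sharp bound $|\omega_1\omega_3|/|\det J|\le\sin(b)\sqrt{1-\cos^2(a)\tan^2(b)}\le 1$. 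Your version is more elementary and self-contained: it gives an explicit constant, handles all of $[0,2\pi]^2$ in one stroke (the degenerate sets $\sin(a)=0$ and $b=\pi/2$ being recovered by continuity exactly as you indicate), and removes the dependence on the external asymptotic lemmas, at no loss of generality. The only point worth stating explicitly in a write-up is the one you already flag: the well-definedness of the square root, i.e.\ $|\cos(a)\tan(b)|\le 1$ on $[0,2\pi]^2$, which the paper records via \cite[Lemma~12]{GermainLaMenegaki2024RJ}.
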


\begin{proof}
From Lemma~\ref{lem:jacobian_structure}, together with the elementary bound
established in \cite[Lemma~12]{GermainLaMenegaki2024RJ},
\[
\bigl|\cos\!\bigl(\tfrac{p_0+p_2}{4}\bigr)\,
       \tan\!\bigl(\tfrac{|p_0-p_2|}{4}\bigr)\bigr|
       \le 1, \qquad (p_0,p_2)\in [0,2\pi]^2,
\]
we obtain
\begin{equation*}
 \big|\det J\big|^{-1}
   \le \frac{1}{
   \sin(\frac{p_0+p_2}{4})\,
   \tan(\frac{|p_0-p_2|}{4})}.   
\end{equation*}
Hence
\begin{equation}\label{boundJ-1}
\left|
\frac{\omega_1\omega_3}
     {\det J}
\right|
\le
\left|
\frac{\omega_1\omega_3}{
  \sin(\frac{p_0+p_2}{4})\,
  \tan(\frac{|p_0-p_2|}{4})}
\right|.
\end{equation}
We claim that the last factor is uniformly bounded.  
Using the identity from Lemma~\ref{lem:resonance_tan_identity},
\[
|\omega_1\omega_3|
  = \tan^2\!\left(\tfrac{p_2-p_0}{4}\right)
    \omega_0\omega_2,
\]
we obtain
\begin{equation}\label{boundJ-12}
\left|
\frac{\omega_1\omega_3}{
  \sin(\frac{p_0+p_2}{4})\,
  \tan(\frac{|p_0-p_2|}{4})}
\right|
= 
\left|
\frac{\omega_0\omega_2\tan\!(\frac{p_0-p_2}{4})}
     {\sin(\frac{p_0+p_2}{4})}
\right|.
\end{equation}

Possible singularities may occur only when  
either $|\tan(\frac{p_0-p_2}{4})|=\infty$, corresponding to $(p_0,p_2)=(0,2\pi)$ or   $(2\pi,0) $,  
or $\sin(\frac{p_0+p_2}{4})=0$, corresponding to $(p_0,p_2)=(0,0)$ or $(2\pi,2\pi)$.

\medskip\noindent
\textbf{Case 1.} $(p_0,p_2)\approx(0,2\pi)$  or   $(2\pi,0) $.  By symmetry it suffices to consider $(0,2\pi)$.
Let $\epsilon=\omega_0$ and $s=\omega_2$.  
Following the computation in~\cite[Lemma~18]{GermainLaMenegaki2024RJ}\cite[Lemma~19]{GermainLaMenegaki2024RJ}, one has
\[
|\omega_1\omega_3| \sim \frac{\epsilon s}{\epsilon^2+s^2},\]
Using Lemma~\ref{lem:resonance_tan_identity}, we have
\[
|\tan(\tfrac{p_0-p_2}{4})|\sim (\epsilon^2+s^2)^{-1/2}.
\]
Therefore
\[
\left|
\frac{\omega_1\omega_3}{
 \sin(\frac{p_0+p_2}{4})\tan(\frac{|p_0-p_2|}{4})}
\right|
\lesssim \frac{\epsilon s}{\sqrt{\epsilon^2+s^2}}\lesssim 1.
\]

\medskip\noindent
\textbf{Case 2.} $(p_0,p_2)\approx(0,0)$ or $(2\pi,2\pi)$.  
By symmetry it suffices to consider $(0,0)$.  
Then 
\[
\omega_0\sim p_0,\quad \omega_2\sim p_2,\quad
\sin(\frac{p_0+p_2}{4})\sim p_0+p_2,\quad
\tan(\frac{|p_0-p_2|}{4})\sim |p_0-p_2|.
\]
Hence
\[
\left|
\frac{\omega_1\omega_3}{
 \sin(\frac{p_0+p_2}{4})\tan(\frac{|p_0-p_2|}{4})}
\right|
\sim \frac{p_0p_2|p_0-p_2|}{p_0+p_2}\lesssim 1.
\]
Combining two cases above, we proved the boundedness of $\left|
\omega_1\omega_3
  [\sin(\frac{p_0+p_2}{4})\,
  \tan(\frac{|p_0-p_2|}{4})]^{-1}
\right|$, which, together with \eqref{boundJ-1}, gives the desired estimate \eqref{eq:jacobian_weight_bound}.
\end{proof}

\begin{proof}[Proof of Lemma~\ref{lem:G1_L1_bounds}]
We start from  Fubini's theorem and the explicit form of $\mathcal{G}_1$:
\begin{align*}
\|\omega^l \mathcal{G}_1[f,g,k]\|_{L_p^1}
&= \Big\|\int_0^{2\pi}
   \frac{\omega_0^{l+1}\omega_1\omega_2\omega_3}
        {\sqrt{F_+(p_0,p_2)}}
        f_1 g_2 k_3\,dp_2\Big\|_{L_p^1}\\
&\le \int_{\mathbb{T}^2}
   \Big|
   \frac{\omega_0^{l+1}\omega_1\omega_2\omega_3}
        {\sqrt{F_+(p_0,p_2)}}
        f_1 g_2 k_3
   \Big|\,dp_0\,dp_2\\
&\lesssim
\int_{\mathbb{T}^2}
   |\omega_0^{l+\frac{1}{2}}\omega_1\omega_2^{\frac{1}{2}}\omega_3
     f_1 g_2 k_3|\,dp_0\,dp_2.
\end{align*}

Applying the change of variables $(p_0,p_2)\mapsto(p_1,p_3)$ and 
Lemma~\ref{lem:jacobian_bound}, we obtain
\[
\|\omega^l \mathcal{G}_1[f,g,k]\|_{L_p^1}
\lesssim
\int_{\mathbb{T}^2}
   |\omega_0^{l+\frac{1}{2}}\omega_2^{\frac{1}{2}}
     f_1 g_2 k_3|\,dp_1\,dp_3.
\]
The remainder of the proof proceeds by the same decomposition and 
frequency comparison arguments as in the $L_p^\infty$ case.

\medskip\noindent
\textbf{Case 1.} \(l\ge -\tfrac{1}{2}\). Observing \(\omega_0^{l+\frac{1}{2}}\le 1\) if \(l\ge -\frac{1}{2}\), we obtain

   \begin{align*}
     \|\omega^l \mathcal{G}_1[f,g,k]\|_{L_p^1}& \lesssim\int_0^{2\pi }\left|\omega_2^{\frac{1}{2}}f_1g_2k_3\right|\, dp_1\, dp_3\\    
     &\le \|f\|_{L_p^1}\|\omega^{\frac{1}{2}}g\|_{L_p^\infty}\|k\|_{L_p^1}.
     \end{align*}
      
\medskip\noindent
\textbf{Case 2.} \(l<-\tfrac{1}{2}\).  
Using Lemma~\ref{lem:resonance_identity}, which gives 
\(|\omega_1\omega_2\omega_3|\lesssim \omega_0\), we deduce
\begin{align*}
\|\omega^l \mathcal{G}_1[f,g,k]\|_{L_p^1}
&\lesssim\int_{\mathbb{T}^2}\left|\omega_1^{l+\frac{1}{2}}\omega_2^{l+1}\omega_3^{l+\frac{1}{2}}f_1g_2k_3\right|\, dp_1\, dp_3\\
&\le \|\omega^{l+\frac{1}{2}}f\|_{L_p^1}\,
\|\omega^{l+1}g\|_{L_p^\infty}\,
\|\omega^{l+\frac{1}{2}}k\|_{L_p^1}.
\end{align*}
This completes the proof.
\end{proof}

\begin{lemma}\label{lem:G1_L2_bound}
The trilinear form $\mathcal{G}_1[f,g,k]$ satisfies the following weighted
$L_p^2$ estimates.

\medskip
\noindent
\textnormal{(i)} For $\,l \ge -\tfrac{1}{2}$,
\begin{equation}\label{eq:G1_L2_case1}
\begin{aligned}
    \|\omega^l \mathcal{G}_1[f,g,k]\|_{L_p^2}
    &\lesssim
    \Big(
       \|\omega f\|_{L_p^2}\,
       \|\omega k\|_{L_p^\infty}
      +\|\omega k\|_{L_p^2}\,
       \|\omega f\|_{L_p^\infty}
    \Big)
    \|\omega^{\frac{1}{2}} g\|_{L_p^1}.
\end{aligned}
\end{equation}

\medskip
\noindent
\textnormal{(ii)} For $\,l < -\tfrac{1}{2}$,
\begin{equation}\label{eq:G1_L2_case3}
\begin{aligned}
    \|\omega^l \mathcal{G}_1[f,g,k]\|_{L_p^2}
    &\lesssim
    \Big(
       \|\omega^{l+\frac{3}{2}} f\|_{L_p^2}\,
       \|\omega^{l+\frac{3}{2}} k\|_{L_p^\infty}
      +\|\omega^{l+\frac{3}{2}} k\|_{L_p^2}\,
       \|\omega^{l+\frac{3}{2}} f\|_{L_p^\infty}
    \Big)
    \|\omega^{l+1} g\|_{L_p^1}.
\end{aligned}
\end{equation}
\end{lemma}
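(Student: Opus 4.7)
The plan is to derive the $L^2_p$ bound by inserting a Cauchy--Schwarz step in $p_2$ into the scheme used for the $L^\infty_p$ and $L^1_p$ bounds in Lemmas~\ref{lem:G1_Linf_bound} and \ref{lem:G1_L1_bounds}. The symmetric structure of the right-hand side in $f$ and $k$ reflects the fact that one can pull out either $\omega k$ or $\omega f$ in $L^\infty_p$ and estimate the remaining factor in $L^2_p$; the two summands are obtained by the two dual choices.

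Starting from the parametrized form of $\mathcal{G}_1$ together with $\sqrt{F_+}\gtrsim \omega_0^{1/2}\omega_2^{1/2}$, I would obtain the pointwise bound
\[
|\omega^l\mathcal{G}_1[f,g,k]|(p_0)\lesssim \omega_0^{\,l+1/2}\int \omega_1\omega_2^{1/2}\omega_3\,|f_1 g_2 k_3|\,dp_2,
\]
and treat the factor $\omega_0^{l+1/2}$ as in the earlier lemmas: for Case~(i), $l\ge -1/2$, simply drop it; for Case~(ii), $l<-1/2$, invoke Lemma~\ref{lem:resonance_identity} to transfer it onto the paired weights $\omega_1^{l+3/2}\omega_2^{l+1}\omega_3^{l+3/2}$. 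Next, apply Cauchy--Schwarz in $p_2$ with measure $\omega_2^{1/2}|g_2|\,dp_2$, splitting the integrand as $(\omega_1 f_1)\cdot(\omega_3 k_3)$:
\[
|\omega^l\mathcal{G}_1|^2(p_0)\lesssim \Bigl(\int(\omega_1 f_1)^2\omega_2^{1/2}|g_2|\,dp_2\Bigr)\Bigl(\int(\omega_3 k_3)^2\omega_2^{1/2}|g_2|\,dp_2\Bigr).
\]

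To produce the first summand, I would bound the $k$-factor pointwise using $\omega_3|k_3|\le\|\omega k\|_{L^\infty_p}$, which gives $\|\omega k\|_{L^\infty_p}^2\|\omega^{1/2}g\|_{L^1_p}$, and then integrate in $p_0$ to reduce matters to a double-integral estimate
\[
\int_{\mathbb{T}^2}(\omega_1 f_1)^2\omega_2^{1/2}|g_2|\,dp_0\,dp_2\lesssim \|\omega f\|_{L^2_p}^2\,\|\omega^{1/2}g\|_{L^1_p}.
\]
This last bound is then obtained via Fubini together with the change of variables $(p_0,p_2)\to(p_1,p_3)$ from Lemmas~\ref{lem:jacobian_structure}--\ref{lem:jacobian_bound}, exactly as in the $L^1_p$ proof of Lemma~\ref{lem:G1_L1_bounds}. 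The companion summand of the claim follows by the symmetric argument with $f$ and $k$ interchanged, while Case~(ii) follows the same scheme after the Lemma~\ref{lem:resonance_identity} weight redistribution step performed up front.

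The hard part is the final double-integral estimate: Cauchy--Schwarz produces a factor of $\omega_1^2$ rather than the matched pair $\omega_1\omega_3$ that is cleanly absorbed by the Jacobian bound $|\det J|\gtrsim \omega_1\omega_3$ of Lemma~\ref{lem:jacobian_bound}. After the change of variables this leaves behind a spurious $\omega_3^{-1}$ singularity, which must be neutralized while still controlling $\omega f$ in $L^2_p$ rather than $L^\infty_p$. Overcoming this requires a Hölder-type splitting of $(\omega_1 f_1)^2$ that places one copy of $\omega f$ in $L^\infty_p$ and keeps the other in $L^2_p$, together with Lemma~\ref{lem:resonance_tan_identity} to trade the residual $\omega$-weights for bounded trigonometric expressions before integration. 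This weight-rebalancing step is the technically delicate portion of the proof and is where Case~(ii) also relies on the full identity $|\omega_1\omega_2\omega_3|\lesssim\omega_0$.
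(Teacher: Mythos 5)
Your overall frame (parametrized form of $\mathcal{G}_1$, the lower bound on $\sqrt{F_+}$, the case split on $l$ via Lemma~\ref{lem:resonance_identity}) matches the paper, but the step where you apply Cauchy--Schwarz in $p_2$ to separate $(\omega_1 f_1)$ from $(\omega_3 k_3)$ opens a gap that your proposed remedies do not close. Once $f_1$ and $k_3$ sit in separate integrals, you must prove
\[
\int_{\mathbb{T}^2}(\omega_1 f_1)^2\,\omega_2^{1/2}|g_2|\,dp_0\,dp_2
\;\lesssim\;\|\omega f\|_{L_p^2}^2\,\|\omega^{1/2}g\|_{L_p^1}
\]
with no help from $k$, and neither route you suggest delivers this. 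The two-dimensional change of variables $(p_0,p_2)\mapsto(p_1,p_3)$ with Lemma~\ref{lem:jacobian_bound} only gives $|\det J|^{-1}\lesssim(\omega_1\omega_3)^{-1}$; since your integrand carries $\omega_1^2$ but no $\omega_3$, you are left with an unabsorbable $\omega_3^{-1}$ (and $g_2$ then depends on both new variables, so $\|\omega^{1/2}g\|_{L_p^1}$ does not factor out). A one-dimensional substitution $p_0\mapsto p_1$ at fixed $p_2$ fails as well, because $\partial_{p_0}h$ is not bounded away from zero (one computes $\partial_{p_0}h\to-\tfrac12+\tfrac12\cos(p_2/2)$ as $p_0\to p_2^{+}$, which vanishes as $p_2\to0$). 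Your fallback --- H\"older on $(\omega_1 f_1)^2$ placing one copy of $\omega f$ in $L^\infty$ --- produces, after taking square roots, terms of the form $(\|\omega f\|_{L_p^\infty}\|\omega f\|_{L_p^2})^{1/2}\|\omega k\|_{L_p^\infty}\|\omega^{1/2}g\|_{L_p^1}$, which are \emph{not} dominated by the right-hand side of \eqref{eq:G1_L2_case1}: taking $f,k$ to be narrow bumps with $\|\omega f\|_{L_p^\infty}=\|\omega k\|_{L_p^\infty}=1$ and $\|\omega f\|_{L_p^2}=\|\omega k\|_{L_p^2}=\delta$ gives $\delta^{1/2}$ against $2\delta$.

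The paper's proof keeps $f_1$ and $k_3$ together for exactly this reason. It dualizes against $\phi$ with $\|\phi\|_{L_p^2}=1$, pulls out $\|\omega^{1/2}g\|_{L_p^1}$ by H\"older in $p_2$, applies Cauchy--Schwarz in $p_0$ pairing $\phi_0$ with $\omega_1\omega_3 f_1k_3$, and then splits $[0,2\pi]\ni p_0$ into $I_1\cup I_2$ according to whether $|\partial_{p_0}p_1|\ge\tfrac12$ or $\partial_{p_0}p_3\ge\tfrac12$ (always possible since $\partial_{p_0}p_3=\partial_{p_0}p_1+1$). On $I_1$ one substitutes $p_0\mapsto p_1$, putting $L^2$ on $f$ and $L^\infty$ on $k$; on $I_2$ one substitutes $p_0\mapsto p_3$ and the roles reverse. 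The two summands in \eqref{eq:G1_L2_case1} are thus forced by this geometric partition --- wherever the $f$-substitution degenerates, the $k$-substitution is available --- rather than arising from ``two dual choices'' as your proposal assumes; the 2D Jacobian of Lemmas~\ref{lem:jacobian_structure}--\ref{lem:jacobian_bound} is not used at all in the $L^2$ argument. If you restructure the proof to perform Cauchy--Schwarz in $p_0$ against a dual test function and replace the 2D change of variables by the $I_1\cup I_2$ partition, the remaining elements of your outline (dropping $\omega_0^{2l+1}\le1$ in case (i), redistributing weights via Lemma~\ref{lem:resonance_identity} in case (ii)) go through as you describe.
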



\begin{proof}[Proof of Lemma~\ref{lem:G1_L2_bound}]
We test against a function $\phi$ with $\|\phi\|_{L_p^2}=1$ and write
\begin{align*}
   \langle \omega^l\mathcal{G}_1[f,g,k],\phi\rangle
   &=\int_{\mathbb{T}^2}
     \frac{\omega_0^{l+1}\omega_1\omega_2\omega_3}
          {\sqrt{F_+(p_0,p_2)}}
     f_1 g_2 k_3 \phi_0\, dp_2\,dp_0\\
   &\lesssim
     \int_{\mathbb{T}^2}
     \big|\omega_0^{l+\frac{1}{2}}\omega_1\omega_2^{\frac{1}{2}}\omega_3
      f_1 g_2 k_3 \phi_0\big|\, dp_2\,dp_0,
\end{align*}
where we used the lower bound $\sqrt{F_+(p_0,p_2)}\gtrsim \omega_0^{1/2}\omega_2^{1/2}$.

\medskip
\noindent
\textbf{Case 1.} $l\ge -\tfrac{1}{2}$.  
Applying Hölder’s inequality and Fubini’s theorem gives
\begin{align*}
   |\langle \omega^l\mathcal{G}_1[f,g,k],\phi\rangle|
   &\lesssim
   \|\omega^{\frac{1}{2}} g\|_{L_p^1}\,
   \Big\|
   \int_0^{2\pi}
   |\omega_0^{l+\frac{1}{2}}\omega_1\omega_3
     f_1 k_3 \phi_0|\, dp_0
   \Big\|_{L_{p_2}^\infty}\\
   &\lesssim
   \|\omega^{\frac{1}{2}} g\|_{L_p^1}\,
   \|\phi\|_{L_p^2}\,
   \Big\|
   \Big(\int_0^{2\pi}
     |\omega_0^{2l+1}\omega_1^2\omega_3^2
      f_1^2 k_3^2|\, dp_0\Big)^{1/2}
   \Big\|_{L_{p_2}^\infty}.
\end{align*}

\medskip
\noindent
\emph{Step 1: Partition of the torus and change of variables.}  
Since \(\partial_{p_0} p_1 = \partial_{p_0} p_3 - 1\), we may decompose the integration domain
\(p_0 \in [0,2\pi]\) into two measurable subsets,
\[
[0,2\pi] = I_1 \cup I_2,
\]
such that the following estimates hold almost everywhere:
\[
\bigl|\partial_{p_0} p_1\bigr| \ge \frac{1}{2}
\quad \text{for } p_0 \in I_1,
\qquad
\partial_{p_0} p_3 \ge \frac{1}{2}
\quad \text{for } p_0 \in I_2 .
\]

On $I_1$ we perform the change of variable $p_0\mapsto p_1$,
while on $I_2$ we perform $p_0\mapsto p_3$.  

This implies
\begin{align*}
   \Big\|
   \Big(\int_0^{2\pi}
   |\omega_0^{2l+1}\omega_1^2\omega_3^2 f_1^2k_3^2|\,dp_0\Big)^{1/2}
   \Big\|_{L_{p_2}^\infty}
   &\lesssim
   \Big\|
   \Big(
      \int_{I_1}|\omega_0^{2l+1}\omega_1^2\omega_3^2 f_1^2k_3^2|\,dp_1
     +\int_{I_2}|\omega_0^{2l+1}\omega_1^2\omega_3^2 f_1^2k_3^2|\,dp_3
   \Big)^{1/2}
   \Big\|_{L_{p_2}^\infty}\\
   &\lesssim
   \Big\|
   \Big(
      \int_{\mathbb{T}}|\omega_0^{2l+1}\omega_1^2\omega_3^2 f_1^2k_3^2|\,dp_1
     +\int_{\mathbb{T}}|\omega_0^{2l+1}\omega_1^2\omega_3^2 f_1^2k_3^2|\,dp_3
   \Big)^{1/2}
   \Big\|_{L_{p_2}^\infty}.
\end{align*}

\medskip
\noindent
\emph{Step 2: Boundedness of $\omega_0^{2l+1}$.}  
Applying \(\omega_0^{2l+1}\le 1\), we obtain
\begin{align*}
   \Big\|
   \Big(\int_0^{2\pi}
   |\omega_0^{2l+1}\omega_1^2\omega_3^2 f_1^2k_3^2|\,dp_0\Big)^{1/2}
   \Big\|_{L_{p_2}^\infty}
   &\lesssim
   \|\omega f\|_{L_p^2}\|\omega k\|_{L_p^\infty}
   +\|\omega k\|_{L_p^2}\|\omega f\|_{L_p^\infty}
\end{align*}
Taking the supremum over $\phi$ with $\|\phi\|_{L_p^2}=1$ gives
\eqref{eq:G1_L2_case1}.

\medskip
\noindent
\textbf{Case 2.} $l<-\tfrac{1}{2}$.  
Using Lemma~\ref{lem:resonance_identity}, which implies
$|\omega_1\omega_2\omega_3|\lesssim\omega_0$, we obtain
\begin{align*}
   |\langle \omega^l\mathcal{G}_1[f,g,k],\phi\rangle|
   &\lesssim
   \int_{\mathbb{T}^2}
   |\omega_1^{l+\frac{3}{2}}\omega_2^{l+1}\omega_3^{l+\frac{3}{2}}
     f_1 g_2 k_3 \phi_0|\,dp_2\,dp_0\\
   &\lesssim
   \|\omega^{l+1}g\|_{L_p^1}\,
   \|\phi\|_{L_p^2}\,
   \Big\|
   \Big(\int_0^{2\pi}
     |\omega_1^{2l+3}\omega_3^{2l+3} f_1^2k_3^2|\,dp_0\Big)^{1/2}
   \Big\|_{L_{p_2}^\infty}.
\end{align*}
Repeating the same partition and change-of-variable argument as above, we find
\begin{align*}
   \Big\|
   \Big(\int_0^{2\pi}
     |\omega_1^{2l+3}\omega_3^{2l+3} f_1^2k_3^2|\,dp_0\Big)^{1/2}
   \Big\|_{L_{p_2}^\infty}
   &\lesssim 
     \|\Big(\int_0^{2\pi }
      \left|\omega_1^{2l+3}\omega_3^{2l+3}
       f_1^2k_3^2 \right|\,dp_1
       +
       \int_0^{2\pi }
      \left|\omega_1^{2l+3}\omega_3^{2l+3}
       f_1^2k_3^2 \right|\,dp_3
       \Big)^{\frac{1}{2}}
    \|_{L_{p_2}^\infty}\\
   &\lesssim
   \|\omega^{l+\frac{3}{2}}f\|_{L_p^2}\|\omega^{l+\frac{3}{2}}k\|_{L_p^\infty}
   +\|\omega^{l+\frac{3}{2}}k\|_{L_p^2}\|\omega^{l+\frac{3}{2}}f\|_{L_p^\infty}.
\end{align*}
Combining the above bounds yields \eqref{eq:G1_L2_case3}, which completes the proof.
\end{proof}

\begin{remark}[Comparison with the $L^1$ bound and the necessity of the refined $L^2$ estimate]
The $L^1$ estimate relies on a single change of variables 
\((p_0,p_2)\mapsto(p_1,p_3)\).  
In contrast, the $L^2$ bound requires a finer analysis: the torus is divided
into two regions \(I_1\) and \(I_2\), on which we separately use the
transformations \(p_0\mapsto p_1\) and \(p_0\mapsto p_3\).  
This partition ensures that the map is non-degenerate on each region and
allows the change of variables to be applied in a controlled fashion.

\medskip
If one instead attempts to use the embedding $L_p^2\hookrightarrow L_p^1$ in Lemma~\ref{lem:weight-properties} and
apply the same change of variables as in the $L^1$ estimate,
the resulting bound would be for $l\ge -\frac{1}{2}$
\[
   \|\omega^l\mathcal{G}_1[f,g,k]\|_{L_p^2}
   \lesssim
   \|\omega ^{\frac{1}{2}}f\|_{L_p^2}\,
   \|\omega^{\frac{1}{2}}k\|_{L_p^2}\,
   \|\omega^{\frac{1}{2}+\frac{1}{2}_{-}}g\|_{L_p^\infty},
\]
which, although valid, is too weak to control the nonlinear iteration required
for long-time existence. In particular, this simple $L^2L^2L^\infty$ structure
does not provide sufficient decay or weight redistribution to close the
bootstrap argument in the dispersive regime.

\medskip

Lemma~\ref{lem:G1_L2_bound} furnishes the strengthened weighted $L^2$
estimate needed to overcome this limitation.  
The localized transformations on \(I_1\) and \(I_2\), together with a more
efficient use of the $\omega$-weights, provide the additional smallness needed
to close the nonlinear bounds in \(L_x^r L_p^q\) and extend the lifespan from
\(\varepsilon^{-2}\) to \(\varepsilon^{-4}\).

\end{remark}
\begin{proposition}[Weighted bounds for the collision trilinear forms]\label{prop:master_bounds}
Let $l\in\mathbb R$ and $1\le q\le\infty$. Then the gain/loss operators satisfy:
\begin{align*}
&\textbf{Loss: } 
\|\omega^l\mathcal L_2[f,g,k]\|_{L_p^q}\lesssim 
\|\omega^{l+\frac12}f\|_{L_p^q}\,\|\omega g\|_{L_p^\infty}\,\|\omega^{\frac12}k\|_{L_p^1},\\
&\|\omega^l\mathcal L_1[f,g,k]\|_{L_p^q}\lesssim 
\|\omega^{l+\frac12}f\|_{L_p^q}\,\|\omega^{\frac12}k\|_{L_p^1}\,\|\omega g\|_{L_p^\infty},\\[2mm]
&\textbf{Gain (G2): }
\|\omega^l\mathcal G_2[f,g,k]\|_{L_p^q}\lesssim 
\|\omega^{l+\frac12}f\|_{L_p^q}\,\|\omega^{\frac12}g\|_{L_p^1}\,\|\omega k\|_{L_p^\infty},\\
&\|\omega^l\mathcal G_2[f,g,k]\|_{L_p^q}\lesssim 
\|\omega^{l+\frac12}f\|_{L_p^q}\,\|\omega g\|_{L_p^\infty}\,\|\omega^{\frac12}k\|_{L_p^1},\\[2mm]
&\textbf{Gain (G1), $L_p^\infty$: } \text{(cite Lemma \ref{lem:G1_Linf_bound})},\\
&\textbf{Gain (G1), $L_p^1$: } \text{(cite Lemma \ref{lem:G1_L1_bounds})},\\
&\textbf{Gain (G1), $L_p^2$: } \text{(cite Lemma \ref{lem:G1_L2_bound})}.
\end{align*}
All implicit constants depend only on the dispersion relation and absolute geometric constants.
\end{proposition}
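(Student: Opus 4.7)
The plan is to observe that Proposition~\ref{prop:master_bounds} is essentially a compilation of the individual trilinear estimates established earlier in Section~4. No new analytic work is required; the proof consists in identifying, for each line of the proposition, the corresponding lemma and checking that the combination of exponents and weights matches. Specifically, the two loss bounds come directly from Lemma~\ref{lem:L2_bound} (for $\mathcal L_2$) and Lemma~\ref{lem:L1_bound} (for $\mathcal L_1$), each obtained from the lower bound $\sqrt{F_+(p_0,p_2)}\gtrsim \omega_0^{1/2}\omega_2^{1/2}$ from \eqref{F+lower} together with a single H\"older inequality in $p_2$.

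The two estimates for $\mathcal G_2$ then follow from Lemma~\ref{lem:G2_bound} and Lemma~\ref{lem:G2_alt_bound}: the first is obtained from the $(p_0,p_2)$-parametrization, and the alternative follows by swapping the roles of $p_2$ and $p_3$, whose legitimacy is ensured by the symmetry of $\Sigma$ and $\Omega$ under this exchange. For the gain term $\mathcal G_1$, the weighted bounds in $L_p^\infty$, $L_p^1$, and $L_p^2$ are exactly the statements of Lemma~\ref{lem:G1_Linf_bound}, Lemma~\ref{lem:G1_L1_bounds}, and Lemma~\ref{lem:G1_L2_bound} respectively; these rely on the more delicate ingredients, namely the Jacobian formula of Lemma~\ref{lem:jacobian_structure}, the uniform control of the Jacobian--weight product in Lemma~\ref{lem:jacobian_bound}, and the resonance identities from Lemmas~\ref{lem:resonance_identity} and \ref{lem:resonance_tan_identity}.

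The only step that requires a brief comment is the dichotomy $l\ge -\tfrac12$ versus $l<-\tfrac12$, which arises in each of the $\mathcal G_1$ estimates: when $l\ge -\tfrac12$ we absorb the factor $\omega_0^{l+1/2}\le 1$ arising from $\sqrt{F_+}^{-1}$, whereas when $l<-\tfrac12$ we instead invoke the resonance identity $|\omega_1\omega_2\omega_3|\lesssim \omega_0$ of Lemma~\ref{lem:resonance_identity} to redistribute the small weight onto $\omega_1,\omega_2,\omega_3$. This distinction is already baked into the corresponding lemma statements, so the proof of the proposition reduces to citing them.

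I do not expect any genuine obstacle here, since every individual bound has already been established; the only thing to watch is clerical consistency between the exponents written in the proposition and those in the invoked lemmas. Accordingly, the proof would be a single short paragraph of the form: \emph{``The bounds for $\mathcal L_2,\mathcal L_1,\mathcal G_2$ are Lemmas~\ref{lem:L2_bound}, \ref{lem:L1_bound}, \ref{lem:G2_bound} and \ref{lem:G2_alt_bound}. The three bounds for $\mathcal G_1$ are Lemmas~\ref{lem:G1_Linf_bound}, \ref{lem:G1_L1_bounds} and \ref{lem:G1_L2_bound}. This concludes the proof.''}
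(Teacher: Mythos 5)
Your proposal is correct and coincides with the paper's treatment: Proposition~\ref{prop:master_bounds} is indeed just a compilation of Lemmas~\ref{lem:L2_bound}, \ref{lem:L1_bound}, \ref{lem:G2_bound}, \ref{lem:G2_alt_bound}, \ref{lem:G1_Linf_bound}, \ref{lem:G1_L1_bounds}, and \ref{lem:G1_L2_bound}, and the paper gives no separate argument beyond citing them. Your identification of the lemmas and of the $l\ge-\tfrac12$ versus $l<-\tfrac12$ dichotomy is accurate.
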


In view of the monotonicity of weighted norms (see~\eqref{eq:weight-monotonicity}),
\[
   \|\omega^{l_1} f\|_{L_p^q} \le \|\omega^{l_2} f\|_{L_p^q}, 
   \qquad l_2 \ge l_1,
\]
we can collect the previous bounds for the gain and loss operators into a unified form.

\begin{proposition}[Unified weighted estimates for the collision operators]
Let $\mathcal{T}\in\{\mathcal{L}_1,\mathcal{L}_2,\mathcal{G}_1,\mathcal{G}_2\}$.
Then the following weighted $L_p^q$ bounds hold.

\medskip
\noindent
\textnormal{(i)} For $\,l\ge -\tfrac{1}{2}$,
\begin{align}
\|\omega^l \mathcal{T}[f,f,f]\|_{L_p^\infty}
 &\lesssim 
 \|\omega^{\min\{l+\frac12,\,1\}} f\|_{L_p^\infty}\,
 \|\omega f\|_{L_p^\infty}\,
 \|\omega^{\frac12} f\|_{L_p^1}, \label{eq:T_infty}\\[3pt]
\|\omega^l \mathcal{T}[f,f,f]\|_{L_p^1}
 &\lesssim 
 \| f\|_{L_p^1}\,
 \|\omega^{\frac12} f\|_{L_p^\infty}\,
 \|f\|_{L_p^1}, \label{eq:T_L1}\\[3pt]
\|\omega^l \mathcal{T}[f,f,f]\|_{L_p^2}
 &\lesssim 
 \|\omega^{\min\{l+\frac12,\,1\}} f\|_{L_p^2}\,
 \|\omega f\|_{L_p^\infty}\,
 \|\omega^{\frac12} f\|_{L_p^1}. \label{eq:T_L2}
\end{align}

\medskip
\noindent
\textnormal{(ii)} For $\,l< -\tfrac{1}{2}$,
\begin{align}
\|\omega^l \mathcal{T}[f,f,f]\|_{L_p^\infty}
 &\lesssim
 \|\omega^{l+\frac{1}{2}} f\|_{L_p^\infty}\,
 \|\omega^{l+\frac{3}{2}} f\|_{L_p^\infty}\,
 \|\omega^{l+1} f\|_{L_p^1}, \label{eq:T_low_infty}\\[3pt]
\|\omega^l \mathcal{T}[f,f,f]\|_{L_p^1}
 &\lesssim
 \|\omega^{l+\frac{1}{2}} f\|_{L_p^1}\,
 \|\omega^{l+1} f\|_{L_p^\infty}\,
 \|\omega^{l+\frac{1}{2}} f\|_{L_p^1}, \label{eq:T_low_L1}\\[3pt]
\|\omega^l \mathcal{T}[f,f,f]\|_{L_p^2}
 &\lesssim
 \|\omega^{l+\frac{1}{2}} f\|_{L_p^2}\,
 \|\omega^{l+\frac{3}{2}} f\|_{L_p^\infty}\,
 \|\omega^{l+1} f\|_{L_p^1}. \label{eq:T_low_L2}
\end{align}
\end{proposition}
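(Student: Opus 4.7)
The plan is to deduce the unified proposition directly from the trilinear bounds of Lemmas~\ref{lem:L2_bound}, \ref{lem:G2_bound}, \ref{lem:L1_bound}, \ref{lem:G2_alt_bound}, \ref{lem:G1_Linf_bound}, \ref{lem:G1_L1_bounds}, \ref{lem:G1_L2_bound} by specializing each of them to the diagonal case $g=k=f$ and then invoking the weight monotonicity \eqref{eq:weight-monotonicity} to align the resulting exponents with the common templates \eqref{eq:T_infty}--\eqref{eq:T_low_L2}. No new analytic input is required: the step is essentially one of cataloguing, and the case split $l\ge -\tfrac12$ versus $l<-\tfrac12$ is precisely the one already forced by the two-case structure of the $\mathcal{G}_1$ bounds.

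Concretely, for each $\mathcal{T}\in\{\mathcal{L}_1,\mathcal{L}_2,\mathcal{G}_1,\mathcal{G}_2\}$ and each target slot $L_p^\infty$, $L_p^1$, $L_p^2$, I would first record the right-hand side produced by the corresponding lemma at $(g,k)=(f,f)$. For the loss and $\mathcal{G}_2$ terms this just means evaluating \eqref{bound_L2}, \eqref{bound_G2}, \eqref{bound_L1}, \eqref{bound_G2_alt} at $q\in\{\infty,1,2\}$; for $\mathcal{G}_1$ the high- and low-range templates come respectively from Case~(i) and Case~(ii) of Lemmas~\ref{lem:G1_Linf_bound}, \ref{lem:G1_L1_bounds}, \ref{lem:G1_L2_bound}. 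Once the four candidate right-hand sides are written down for a given slot, I would use \eqref{eq:weight-monotonicity} to enlarge each factor $\|\omega^{a} f\|$ to $\|\omega^{a'} f\|$ with $a'\le a$, so that all four estimates are dominated by a single expression.

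For $l\ge -\tfrac12$ every exponent produced by the loss and $\mathcal{G}_2$ bounds is nonnegative, so the monotonicity step is essentially free; the one delicate point is that the loss/$\mathcal{G}_2$ bounds supply a first factor of weight $l+\tfrac12$ while Case~(i) of the $\mathcal{G}_1$ bound supplies weight $1$, so the joint dominating exponent is $\min\{l+\tfrac12,1\}$, which is exactly the weight appearing in \eqref{eq:T_infty} and \eqref{eq:T_L2}. In the $L_p^1$ slot the dominating weight collapses all the way to $0$, which is why \eqref{eq:T_L1} carries no $l$-dependence. For $l<-\tfrac12$ the $\mathcal{G}_1$ Case~(ii) bounds already exhibit the shifted exponents $l+\tfrac12,\ l+1,\ l+\tfrac32$, and the fixed positive exponents of the loss/$\mathcal{G}_2$ inequalities are again absorbed by monotonicity since these template exponents are uniformly smaller in this regime. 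I do not expect any genuine analytic obstacle; the only hazard is clerical — matching the $f,g,k$ slot assignments of each trilinear to the corresponding positions in the template — and I would organize the write-up as a short operator-by-operator table to keep that bookkeeping transparent.
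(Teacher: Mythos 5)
Your proposal is correct and is exactly the argument the paper intends: the proposition is obtained by specializing the trilinear bounds of Lemmas~\ref{lem:L2_bound}--\ref{lem:G1_L2_bound} to $g=k=f$ and merging them via the monotonicity \eqref{eq:weight-monotonicity}, with the $\min\{l+\tfrac12,1\}$ exponent arising precisely from reconciling the $l+\tfrac12$ weight of the loss/$\mathcal{G}_2$ bounds with the weight $1$ in Case~(i) of the $\mathcal{G}_1$ bounds, as you note.
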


By the embedding of weighted $L^q$ spaces established in Lemma~\ref{lem:weight-properties}, we have
\[
\|\omega^{\frac{1}{2}} f\|_{L_p^1}
\lesssim
\|\omega^{\frac{1}{2}+1_-} f\|_{L_p^\infty}.
\]
Combining this estimate with the monotonicity property of the weighted norms, 
we can recover the same weighted $L_p^\infty$ bound as derived in 
\cite[Sec.~3.1]{GermainLaMenegaki2024RJ}.
\begin{proposition}\label{prop:L_infty_bound}
Let $\mathcal{T}\in\{\mathcal{L}_1,\mathcal{L}_2,\mathcal{G}_1,\mathcal{G}_2\}$.
Then the following weighted $L_p^\infty$ estimate holds: for any $\alpha\le 1$,
\begin{align}\label{L_inf_bound}
\|\omega^\alpha \mathcal{T}[f,g,k]\|_{L_p^\infty}
\lesssim 
\|\omega^\alpha f\|_{L_p^\infty}\,
\|\omega^\alpha g\|_{L_p^\infty}\,
\|\omega^\alpha k\|_{L_p^\infty}.
\end{align}
\end{proposition}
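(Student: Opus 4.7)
The strategy is to deduce the unified bound \eqref{L_inf_bound} directly from the detailed multilinear estimates already proved in Lemmas~\ref{lem:L2_bound}, \ref{lem:L1_bound}, \ref{lem:G2_bound} (and its symmetric variant, Lemma~\ref{lem:G2_alt_bound}), and \ref{lem:G1_Linf_bound}, which are collected in Proposition~\ref{prop:master_bounds}. Each of these bounds controls $\|\omega^l \mathcal{T}[f,g,k]\|_{L_p^\infty}$ by a product of three weighted norms, exactly one of which is of the form $\|\omega^{1/2}\cdot\|_{L_p^1}$ (or $\|\omega^{l+1}\cdot\|_{L_p^1}$ in the very-negative-weight regime of $\mathcal{G}_1$). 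The plan is to trade this $L_p^1$ norm for an $L_p^\infty$ norm via the weighted embedding of Lemma~\ref{lem:weight-properties}(iii), and then to dominate every resulting weight by the common weight $\omega^\alpha$ using the monotonicity property \eqref{eq:weight-monotonicity}, which is exactly where the hypothesis $\alpha\le 1$ enters.

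More concretely, for $\mathcal{T}\in\{\mathcal{L}_1,\mathcal{L}_2,\mathcal{G}_2\}$ I apply the corresponding lemma with $l=\alpha$ and $q=\infty$. The right-hand side then contains factors of the form $\|\omega^{\alpha+1/2}\cdot\|_{L_p^\infty}$, $\|\omega\cdot\|_{L_p^\infty}$, and $\|\omega^{1/2}\cdot\|_{L_p^1}$, with the roles of $f,g,k$ permuted according to the operator. Applying Lemma~\ref{lem:weight-properties}(iii) with $(q_1,q_2)=(\infty,1)$ and $m=1/2$ gives $\|\omega^{1/2}\cdot\|_{L_p^1}\lesssim \|\omega\cdot\|_{L_p^\infty}$. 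Because $\alpha\le 1$, the monotonicity inequality \eqref{eq:weight-monotonicity} yields $\|\omega\cdot\|_{L_p^\infty}\le \|\omega^\alpha\cdot\|_{L_p^\infty}$ and $\|\omega^{\alpha+1/2}\cdot\|_{L_p^\infty}\le \|\omega^\alpha\cdot\|_{L_p^\infty}$. Combining these steps produces \eqref{L_inf_bound} for the three loss-type operators.

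The only operator requiring slightly more care is $\mathcal{T}=\mathcal{G}_1$, for which I split into the two regimes of Lemma~\ref{lem:G1_Linf_bound}. In the regime $\alpha\ge -\tfrac{1}{2}$, the right-hand side of Lemma~\ref{lem:G1_Linf_bound}(i) has exactly the same structure $\|\omega \cdot\|_{L_p^\infty}\cdot\|\omega^{1/2}\cdot\|_{L_p^1}\cdot\|\omega\cdot\|_{L_p^\infty}$ as the loss bounds, and the same manipulation applies verbatim. In the regime $\alpha<-\tfrac{1}{2}$, Lemma~\ref{lem:G1_Linf_bound}(ii) produces the shifted exponents $\alpha+\tfrac{3}{2}$ and $\alpha+1$; since both exceed $\alpha$, monotonicity again closes the estimate once the $L_p^1$ factor $\|\omega^{\alpha+1}\cdot\|_{L_p^1}$ is promoted to $\|\omega^{\alpha+3/2}\cdot\|_{L_p^\infty}$ via the same embedding with $m=1/2$. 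I do not expect any genuine analytical obstacle here, as the weighted embedding of Lemma~\ref{lem:weight-properties}(iii) is tailored precisely to absorb the $L_p^1\to L_p^\infty$ trade at the cost of the mild additional weight $\omega^{1/2}$; the only mild subtlety worth recording is that the threshold $\alpha\le 1$ is essentially sharp, since the factor $\|\omega\cdot\|_{L_p^\infty}$ arising from the loss bounds cannot be dominated by $\|\omega^\alpha\cdot\|_{L_p^\infty}$ when $\alpha>1$.
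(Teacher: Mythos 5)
Your proposal is correct and follows essentially the same route as the paper, which (in the paragraph preceding the proposition) deduces the bound from the collected lemmas by promoting the $\|\omega^{1/2}\cdot\|_{L_p^1}$ factor to an $L_p^\infty$ norm via the weighted embedding of Lemma~\ref{lem:weight-properties}(iii) and then using monotonicity \eqref{eq:weight-monotonicity} with $\alpha\le 1$ to unify all weights as $\omega^\alpha$. Your write-up is in fact more detailed than the paper's two-sentence sketch, correctly handling both regimes of $\mathcal{G}_1$; the only cosmetic slip is labeling $\mathcal{G}_2$ a ``loss-type'' operator.
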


\section{Long-Time Existence}
From this point on, we introduce the following weighted dispersive norm, which combines the uniform-in-time control in 
$L_{x,p}^\infty(\omega^\alpha)$ with the decay-in-time estimates arising from the dispersive behavior of the transport semigroup:
\begin{align}\label{def:X_norm}
\|f\|_{X}
:=
\sup_{t\ge 0}\|\omega^{\alpha} f(t)\|_{L_{x,p}^\infty}
+\sup_{t\ge 0}\|f(t)\|_{L_{x,p}^1}
+\sup_{t\ge 0}\langle t\rangle^{\frac{1}{2}}\|\omega^{\beta} f(t)\|_{L_x^2L_p^1}
+\sup_{t\ge 0}\langle t\rangle^{\frac{1}{2}}\|\omega^{\gamma} f(t)\|_{L_x^\infty L_p^2}.
\end{align}
This mixed norm captures both the conservative (energy-type) and the dispersive aspects of the solution, 
and will serve as the main functional setting for the contraction argument in the subsequent analysis.

\medskip
\noindent
\qedhere

\begin{theorem}[Local well-posedness in weighted dispersive norms]\label{thm:LWP_dispersive}
Assume the weight exponents $\alpha,\beta,\gamma$ satisfy the structural conditions
\[
\alpha\le \tfrac12,
\qquad 
\alpha<\gamma+\tfrac12,
\qquad
\alpha<2\beta+1.
\]
Then there exists $0<\varepsilon_0\ll 1$ such that for all 
$0<\varepsilon<\varepsilon_0$, if the initial datum satisfies the small weighted
$L^\infty\cap L^1$ condition
\[
\|\omega^{\alpha} f_0\|_{L_{x,p}^\infty}\le \varepsilon,
\qquad 
\|f_0\|_{L_{x,p}^1}\le \varepsilon,
\]
the inhomogeneous kinetic FPU equation admits a unique mild solution
\[
f\in C\big([0,T];\,L_{x,p}^\infty(\omega^\alpha)\cap L_{x,p}^1\big),
\qquad
f(t)=S(t)f_0+\int_0^t S(t-s)\mathcal{C}[f](s)\,ds,
\]
for some $T\sim 1$ with $T>1$.
In particular, since $T\sim1$, the dispersive weights
$\langle t\rangle^{1/2}$ remain uniformly comparable to~$1$ on~$[0,T]$.

Moreover, the solution satisfies the uniform bound
\begin{equation}\label{eq:X_norm_estimate1}
\|f\|_{X}
:=
\sup_{t\in[0,T]}\|\omega^{\alpha} f(t)\|_{L_{x,p}^\infty}
+\sup_{t\in[0,T]}\|f(t)\|_{L_{x,p}^1}
+\sup_{t\in[0,T]}\langle t\rangle^{\frac12}\|\omega^{\beta} f(t)\|_{L_x^2L_p^1}
+\sup_{t\in[0,T]}\langle t\rangle^{\frac12}\|\omega^{\gamma} f(t)\|_{L_x^\infty L_p^2}
\le C_0\varepsilon,
\end{equation}
for some constant $C_0>0$ independent of $\varepsilon$.
\end{theorem}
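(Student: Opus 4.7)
The plan is to realize the solution as a fixed point of the Duhamel map
\[
\Phi[f](t)=\mathcal{S}(t)f_0+\int_0^t \mathcal{S}(t-s)\,\mathcal{C}[f](s)\,ds
\]
on the closed ball $B_{C_0\varepsilon}$ of a Banach space $X_T$ obtained by restricting each of the four suprema in \eqref{def:X_norm} to $t\in[0,T]$ with $T$ chosen slightly larger than one. Because $T\sim 1$, the dispersive weights $\langle t\rangle^{1/2}$ are uniformly comparable to $1$ on $[0,T]$, so the four components of $X$ effectively reduce to the associated static weighted mixed Lebesgue norms; the dispersive estimates themselves play only an auxiliary role here and will be used decisively only in the long-time bootstrap of Section~5.

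First I would bound the linear piece $\mathcal{S}(\cdot)f_0$ in $X_T$ by $C_1\varepsilon$. The $L_{x,p}^\infty(\omega^\alpha)$ and $L_{x,p}^1$ components are immediate from the commutation identity \eqref{eq:commute} and the translation invariance of $\mathcal{S}(t)$ in $x$. For the $L_x^2L_p^1(\omega^\beta)$ component, I would apply Stein--Weiss interpolation in the $x$-variable between the endpoints $L_{x,p}^1$ (with trivial weight) and $L_x^\infty L_p^1(\omega^{2\beta})$ at $\theta=\tfrac12$; the second endpoint is then reduced to $\|\omega^\alpha f_0\|_{L_{x,p}^\infty}$ via the weighted embedding \eqref{eq:weighted_embedding}, and the admissibility constraint $m<1$ translates exactly into the hypothesis $\alpha<2\beta+1$. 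The $L_x^\infty L_p^2(\omega^\gamma)$ component is handled by a single direct application of \eqref{eq:weighted_embedding} in the $p$-variable, with the admissibility $m<\tfrac12$ matching the hypothesis $\alpha<\gamma+\tfrac12$. The remaining structural condition $\alpha\le\tfrac12$ ensures $\omega^{1/2}\le\omega^\alpha$, so that $L_p^\infty$-type norms with weight $\omega^{1/2}$ are controlled by $\|\omega^\alpha f_0\|_{L_p^\infty}$.

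Next I would show $\|\Phi[f]-\mathcal{S}(\cdot)f_0\|_{X_T}\lesssim T\|f\|_X^3$ for every $f$ in the ball. For the $L_{x,p}^\infty(\omega^\alpha)$ norm, Proposition \ref{prop:L_infty_bound} gives $\|\omega^\alpha\mathcal{C}[f]\|_{L_{x,p}^\infty}\lesssim\|\omega^\alpha f\|_{L_{x,p}^\infty}^3$ directly. For the $L_{x,p}^1$ norm, I would apply \eqref{eq:T_L1} fibrewise in $x$, H\"older in $x$ with an $L_x^1\cdot L_x^\infty\cdot L_x^\infty$ splitting, and then absorb the $L_p^1$ and $\omega^{1/2}L_p^\infty$ factors into $\|f\|_X$ using the embedding $\|f\|_{L_p^1}\lesssim\|\omega^\alpha f\|_{L_p^\infty}$ (valid since $\alpha<1$) and the monotonicity \eqref{eq:weight-monotonicity}. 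For the $L_x^2L_p^1(\omega^\beta)$ norm I would iterate the linear Stein--Weiss step, reducing matters to $\|\mathcal{C}[f]\|_{L_{x,p}^1}$ and $\|\omega^{2\beta}\mathcal{C}[f]\|_{L_x^\infty L_p^1}$, each already controlled. The most delicate component is $L_x^\infty L_p^2(\omega^\gamma)$: here the refined Lemma~\ref{lem:G1_L2_bound} is essential, as it produces precisely the $L_p^\infty$, $L_p^2$, $L_p^1$ distribution of weights needed to close in $X$ (the simpler $L_p^2\hookrightarrow L_p^1$ estimate is, as discussed in the remark after Lemma~\ref{lem:G1_L2_bound}, insufficient even for the local bootstrap). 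Combining these bounds and choosing $C_0=2C_1$ and $\varepsilon$ so small that $C_2 T (C_0\varepsilon)^2\le 1$ gives $\|\Phi[f]\|_{X_T}\le C_0\varepsilon$.

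The contraction step follows from the trilinear decomposition
\[
\mathcal{C}[f]-\mathcal{C}[g]=\mathcal{C}[f-g,f,f]+\mathcal{C}[g,f-g,f]+\mathcal{C}[g,g,f-g],
\]
combined with the same family of multilinear weighted bounds applied to $\mathcal{C}[\cdot,\cdot,\cdot]$, yielding a Lipschitz constant of size $O(T\varepsilon^2)<\tfrac12$ for $\varepsilon$ small. Banach's fixed-point theorem then produces the unique mild solution, and continuity in time follows from dominated convergence applied to the Duhamel integral together with the strong continuity of $\mathcal{S}(t)$ on each of the mixed Lebesgue spaces. The main analytical obstacle is the cubic closure of the $L_x^\infty L_p^2(\omega^\gamma)$ norm: one must match the specific factor structure of Lemma~\ref{lem:G1_L2_bound} to the four components of $\|f\|_X$ while simultaneously respecting the three structural constraints on $(\alpha,\beta,\gamma)$, each of which encodes exactly one admissibility condition for an embedding of the form \eqref{eq:weighted_embedding}.
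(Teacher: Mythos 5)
Your overall strategy (Duhamel map, Banach fixed point, trilinear difference decomposition) reaches the right conclusion, but you take a genuinely different and substantially heavier route than the paper, and one of your structural claims is wrong. The paper runs the contraction \emph{only} in the two-component space $Y=L_{x,p}^\infty(\omega^\alpha)\cap L_{x,p}^1$ (this needs just the conservation of $\mathcal{S}(t)$ on $L^\infty$ and $L^1$, the bound \eqref{LinftyC}, and the $L^1$ collision bound \eqref{eq:T_L1}), and then obtains the two dispersive components of $\|f\|_X$ \emph{a posteriori}, at each fixed time, purely from the $Y$-bound: $\|\omega^\gamma f\|_{L_x^\infty L_p^2}\lesssim\|\omega^{\gamma+\frac12_-}f\|_{L_{x,p}^\infty}\le\|\omega^\alpha f\|_{L_{x,p}^\infty}$ (using $\alpha<\gamma+\tfrac12$), and $\|\omega^\beta f\|_{L_x^2L_p^1}\lesssim\|f\|_{L_{x,p}^1}^{1/2}\|\omega^{2\beta+1_-}f\|_{L_{x,p}^\infty}^{1/2}$ (using $\alpha<2\beta+1$). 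Since $T\sim1$, the $\langle t\rangle^{1/2}$ weights are harmless and no Duhamel iteration in the mixed norms is needed at all. You instead set up the contraction in the full four-component $X_T$ norm and propagate the Duhamel integral through $L_x^2L_p^1(\omega^\beta)$ and $L_x^\infty L_p^2(\omega^\gamma)$.

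Two concrete problems with your route. First, your assertion that Lemma~\ref{lem:G1_L2_bound} is ``essential \dots even for the local bootstrap'' misreads the remark following that lemma: the crude $L_p^2\hookrightarrow L_p^1$ bound is insufficient only for the \emph{long-time} ($T\sim\varepsilon^{-4}$) argument; for $T\sim1$ the paper closes the local theory without invoking Lemma~\ref{lem:G1_L2_bound} at all. Second, and more seriously, if you insist on estimating $\int_0^t\mathcal{S}(t-s)\mathcal{C}[f](s)\,ds$ directly in $L_x^\infty L_p^2(\omega^\gamma)$, you must either use the dispersive estimate (introducing a $|t-s|^{-1/2}$ singularity and the weight bookkeeping of Theorem~\ref{thm:Longtime_dispersive}, whose closure requires extra conditions such as $0\le\gamma\le\tfrac12$ that are \emph{not} among the hypotheses of the present theorem) or fall back on the embedding into $L_{x,p}^\infty(\omega^\alpha)$ --- which is exactly the paper's shortcut and makes your heavier machinery redundant. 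As written, your exponent bookkeeping for these two components is not verified under only the stated conditions $\alpha\le\tfrac12$, $\alpha<\gamma+\tfrac12$, $\alpha<2\beta+1$. The fix is simply to adopt the paper's order of operations: contract in $Y$ first, then read off the remaining two norms by H\"older and the weighted embedding \eqref{eq:weighted_embedding}.
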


\begin{proof}

\noindent\textbf{Step 1. Contraction argument in $L^\infty_{x,p}(\omega^\alpha)\cap L^1_{x,p}$.}
We first show that the Duhamel mapping
\[
\Phi[f](t)
:= S(t) f_0 + \int_0^t S(t-s)\,\mathcal{C}[f](s)\,ds
\]
defines a contraction on the Banach space
\[
B_{4\varepsilon}
:=\Big\{
f\in C\big([0,T];\,L_{x,p}^\infty(\omega^\alpha)\cap L_{x,p}^1\big)\ :\
\sup_{t\in[0,T]}\|f(t)\|_{Y}\le 4\varepsilon
\Big\},
\]
where
\[
\|f\|_{Y}
:= \|f\|_{L_{x,p}^\infty(\omega^\alpha)}
   + \|f\|_{L_{x,p}^1}.
\]
This contraction property holds whenever
\[
\varepsilon < \varepsilon_0 \ll 1,
\qquad
T \lesssim \varepsilon^{-2}.
\]

In the proof of Theorem~\ref{Theorem:LWP_L_infinity}, we show that $\Phi$ maps the
$L^\infty_{x,p}(\omega^\alpha)$-ball of radius $2\varepsilon$ into itself and is a strict contraction.
More precisely,
\[
\|\Phi[f]\|_{L_{x,p}^\infty(\omega^\alpha)} \le 2\varepsilon,
\qquad
\|\Phi[f]-\Phi[g]\|_{L_{x,p}^\infty(\omega^\alpha)}
\le \frac14\,\|f-g\|_{L_{x,p}^\infty(\omega^\alpha)},
\]
whenever
\[
T \lesssim \varepsilon^{-2}, 
\qquad 
\alpha \le 1.
\]

Using Minkowski’s inequality, the dispersive estimate (Lemma~\ref{lem:SW-weighted}),
the $L^1$-bound of the collision operator~\eqref{eq:T_L1}, and the monotonicity and
embedding of weighted norms~\eqref{eq:weight-monotonicity}–\eqref{eq:weighted_embedding}, we obtain
\begin{align*}
\|\Phi[f]\|_{L_{x,p}^1}
&\le \|\mathcal{S}(t)f_0\|_{L_{x,p}^1}
+\Big\|\int_0^t \mathcal{S}(t-s)\mathcal{C}[f](s)\,ds\Big\|_{L_{x,p}^1}\\
&\le \|f_0\|_{L_{x,p}^1}
+\int_0^t \|\mathcal{C}[f](s)\|_{L_{x,p}^1}\,ds\\
&\le \varepsilon
+CT\|f\|_{L_{x,p}^1}\|\omega^{\frac12}f\|_{L_{x,p}^\infty}\|\omega^{1_-}f\|_{L_{x,p}^\infty}\\
&\le \varepsilon
+CT\|f\|_{L_{x,p}^1}\|\omega^{\alpha}f\|_{L_{x,p}^\infty}^2\\
&\le \varepsilon+CT\varepsilon^3,
\end{align*}
provided that $\alpha\le\tfrac{1}{2}$.  
Hence,
\[
\|\Phi[f]\|_{L_{x,p}^1}\le 2\varepsilon,
\qquad T\lesssim\varepsilon^{-2}.
\]

Using Minkowski’s inequality, the dispersive estimate (Lemma~\ref{lem:SW-weighted}),
the $L^1$-bounds for the collision trilinear forms in Proposition~\ref{prop:master_bounds},
and the monotonicity and embedding of weighted norms~\eqref{eq:weight-monotonicity}–\eqref{eq:weighted_embedding}, we obtain
\begin{align*}
\|\Phi[f]-\Phi[g]\|_{L_{x,p}^1}
&\le \Big\|\int_0^t \mathcal{S}(t-s)\big(\mathcal{C}[f]-\mathcal{C}[g]\big)(s)\,ds\Big\|_{L_{x,p}^1}\\
&\le \int_0^t \|\mathcal{C}[f]-\mathcal{C}[g]\|_{L_{x,p}^1}(s)\,ds\\
&\le \sum_{\Gamma\in\{\mathcal L_1,\mathcal L_2,\mathcal G_1,\mathcal G_2\}}
\int_0^t 
\Big(
\|\Gamma[f-g,f,f](s)\|_{L_{x,p}^1}
+\|\Gamma[g,f-g,f](s)\|_{L_{x,p}^1}
+\|\Gamma[g,g,f-g](s)\|_{L_{x,p}^1}
\Big)\,ds\\
&\le CT\sum_{k_1,k_2\in\{f,g\}}
\Big(
\|f-g\|_{L_{x,p}^1}\|\omega^{\frac12}k_1\|_{L_{x,p}^\infty}\|\omega^{1_-}k_2\|_{L_{x,p}^\infty}
+
\|\omega^{\frac12}(f-g)\|_{L_{x,p}^\infty}\|k_1\|_{L_{x,p}^1}\|\omega^{1_-}k_2\|_{L_{x,p}^\infty}
\Big)\\
&\le CT\sum_{k_1,k_2\in\{f,g\}}
\Big(
\|f-g\|_{L_{x,p}^1}\|\omega^\alpha k_1\|_{L_{x,p}^\infty}\|\omega^\alpha k_2\|_{L_{x,p}^\infty}
+
\|\omega^\alpha(f-g)\|_{L_{x,p}^\infty}\|k_1\|_{L_{x,p}^1}\|\omega^\alpha k_2\|_{L_{x,p}^\infty}
\Big)\\
&\le CT\varepsilon^2\|f-g\|_Y,
\end{align*}
provided that $\alpha\le \tfrac12$.  
Thus,
\[
\|\Phi[f]-\Phi[g]\|_{L_{x,p}^1}
\le \frac14\|f-g\|_{Y},
\qquad T\lesssim\varepsilon^{-2}.
\]

\medskip
Next we restrict to short time $T\sim1$ so that the dispersive bounds apply.

\medskip
\noindent\textbf{Step 2: $L_x^\infty L_p^2$-bound.}
By the monotonicity and embedding of weighted norms~\eqref{eq:weight-monotonicity}–\eqref{eq:weighted_embedding},
\[
\|\omega^\gamma f\|_{L_x^\infty L_p^2}
\lesssim \|\omega^{\gamma+\frac12_-}f\|_{L_{x,p}^\infty}
\le \|\omega^\alpha f\|_{L_{x,p}^\infty}
\le 2\varepsilon,
\]
as long as $\alpha<\gamma+\tfrac{1}{2}$.

\medskip
\noindent\textbf{Step 3: $L_x^2 L_p^1$-bound.}
Using the Cauchy--Schwarz inequality together with the monotonicity and embedding of weighted norms~\eqref{eq:weight-monotonicity}–\eqref{eq:weighted_embedding},
\begin{align*}
\|\omega^{\beta} f\|_{L_x^2L_p^1}
&\lesssim \|\omega^{\beta+\frac12_-}f\|_{L_{x,p}^2}
\le \|f\|_{L_{x,p}^1}^{\frac12}
\|\omega^{2\beta+1_-}f\|_{L_{x,p}^\infty}^{\frac12}\\[0.3em]
&\le (2\varepsilon)^{1/2}(2\varepsilon)^{1/2}
\le 2\varepsilon,
\end{align*}
provided that $\alpha<2\beta+1$.

\medskip
Combining the above bounds yields~\eqref{eq:X_norm_estimate1}, completing the proof.

\end{proof}

Motivated by the weighted dispersive norm and the corresponding dispersive estimates for the transport semigroup, we obtain a long-time existence result.
              
\begin{theorem}[Long-time well-posedness in weighted dispersive norms]\label{thm:Longtime_dispersive}
Assume the weight exponents $\alpha,\beta,\gamma$ satisfy the structural conditions
either
\[
-\tfrac{1}{2}\le\alpha\le \min\{\gamma,\tfrac{1}{2}\},\qquad 0\le \gamma<\tfrac{1}{2},
\quad\text{or}\quad
\alpha<-\tfrac{1}{2},\qquad 0\le \gamma\le \min\{\tfrac{1}{2},\alpha+\tfrac{3}{2}\},
\]
Then there exists $\varepsilon_0\in(0,1)$ such that for every $0<\varepsilon<\varepsilon_0$, if the initial datum satisfies
\[
\|\omega^{\alpha} f_0\|_{L_{x,p}^\infty}\le \varepsilon,\qquad 
\|f_0\|_{L_{x,p}^1}\le \varepsilon,\qquad
\|\omega^{\gamma-\frac{1}{2}}f_0\|_{L_x^2L_p^\infty}\le \varepsilon,\qquad
\|f_0\|_{L_x^1 L_p^2}\le\varepsilon,
\]
then the inhomogeneous kinetic FPU equation admits a unique mild solution
\[
f\in C([0,T];L_{x,p}^\infty(\omega^\alpha)\cap L_{x,p}^1),\qquad
f(t)=S(t)f_0+\int_0^t S(t-s)\,\mathcal{C}[f](s)\,ds,
\]
for some $T\sim \varepsilon^{-4}$. Moreover, the solution satisfies the uniform bound
\begin{equation}\label{eq:X_norm_estimate}
\|f\|_{X}
:=
\sup_{t\in[0,T]}\|\omega^{\alpha} f(t)\|_{L_{x,p}^\infty}
+\sup_{t\in[0,T]}\|f(t)\|_{L_{x,p}^1}
+\sup_{t\in[0,T]}\langle t\rangle^{\frac12}\|\omega^{\frac{1}{2}} f(t)\|_{L_x^2L_p^1}
+\sup_{t\in[0,T]}\langle t\rangle^{\frac12}\|\omega^{\gamma} f(t)\|_{L_x^\infty L_p^2}
\le C_0\,\varepsilon,
\end{equation}
for some constant $C_0>0$ independent of $\varepsilon$.
\end{theorem}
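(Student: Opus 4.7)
The plan is a bootstrap in the norm $X$ of~\eqref{def:X_norm}. Starting from the mild formulation $f(t)=\mathcal{S}(t)f_0+\int_0^t\mathcal{S}(t-s)\mathcal{C}[f](s)\,ds$ and from the local solution produced on a short interval by Theorem~\ref{thm:LWP_dispersive}, I would work on the maximal subinterval $[0,T^*)$ on which the ansatz $\|f\|_X\le 2C_0\varepsilon$ can be maintained, and show that it improves to $\|f\|_X\le C_0\varepsilon$ whenever $T^*\le c\varepsilon^{-4}$. A standard continuity argument then forces $T^*\gtrsim\varepsilon^{-4}$ and yields~\eqref{eq:X_norm_estimate}.

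The linear contributions are handled immediately: conservation of $\mathcal{S}(t)$ in $L^\infty_{x,p}(\omega^\alpha)$ and $L^1_{x,p}$ (Lemma~\ref{lem:weight-properties}(i) and translation invariance) controls the two uniform components of $X$, while Corollary~\ref{cor:r=2 q=1-weighted} combined with the commutation $\mathcal{S}(t)\omega^m=\omega^m\mathcal{S}(t)$ gives
\[
\langle t\rangle^{1/2}\|\omega^{1/2}\mathcal{S}(t)f_0\|_{L^2_xL^1_p}\lesssim\|f_0\|_{L^1_xL^2_p}\le\varepsilon,
\qquad
\langle t\rangle^{1/2}\|\omega^\gamma\mathcal{S}(t)f_0\|_{L^\infty_xL^2_p}\lesssim\|\omega^{\gamma-1/2}f_0\|_{L^2_xL^\infty_p}\le\varepsilon,
\]
which are exactly the two additional initial-data hypotheses.

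For the nonlinear Duhamel term I apply the trilinear bounds of Proposition~\ref{prop:master_bounds} (part (i) when $\alpha\ge-\tfrac12$, part (ii) when $\alpha<-\tfrac12$) to each of the four components of $X$, and distribute the $x$-integration via H\"older so that exactly one of the three factors lands in a dispersive norm of $X$, yielding a $\langle s\rangle^{-1/2}$ decay in the integrand. The systematic device is to trade an $L^1_p$-type factor coming from the collision bound for either $\|\omega^\gamma f\|_{L^\infty_xL^2_p}$ (via Cauchy--Schwarz in $p$ and the monotonicity~\eqref{eq:weight-monotonicity}) or $\|\omega^{1/2}f\|_{L^2_xL^1_p}$; this is legitimate precisely under the structural conditions on $(\alpha,\gamma)$ in the hypothesis. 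The resulting bound $\|\mathcal{C}[f](s)\|_\star\lesssim\varepsilon^3\langle s\rangle^{-1/2}$, convolved with either the identity or the dispersive kernel $|t-s|^{-1/2}$ (using $\int_0^t\langle s\rangle^{-1/2}ds\lesssim\langle t\rangle^{1/2}$ and $\int_0^t|t-s|^{-1/2}\langle s\rangle^{-1/2}ds\lesssim 1$), contributes at most $\varepsilon^3\langle t\rangle^{1/2}$ to each component of $\|f\|_X$. Setting this $\le\tfrac12C_0\varepsilon$ gives exactly $t\lesssim\varepsilon^{-4}$ and closes the bootstrap.

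The main obstacle is the $L^\infty_{x,p}(\omega^\alpha)$ estimate, since $\mathcal{S}(t)$ is only conservative (not decaying) on this space; all of the $\langle s\rangle^{-1/2}$ decay must therefore be extracted from the collision operator itself. In the regime $\alpha\ge -\tfrac12$ this is done by replacing $\|\omega^{1/2} f\|_{L^1_p}$ by $\|\omega^\gamma f\|_{L^2_p}$ (using $\gamma\le\tfrac12$) and pulling the resulting factor out in $L^\infty_x$, producing $\|\omega^\alpha\mathcal{C}[f]\|_{L^\infty_{x,p}}\lesssim\|\omega^\alpha f\|_{L^\infty_{x,p}}^2\,\|\omega^\gamma f\|_{L^\infty_xL^2_p}$. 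The regime $\alpha<-\tfrac12$ is more delicate because Proposition~\ref{prop:master_bounds}(ii) produces the larger weight $\omega^{\alpha+1}$; here one must route the $\mathcal{G}_1$ contribution through the $L^2_p$ estimate~\eqref{eq:T_low_L2} and the embeddings of Lemma~\ref{lem:weight-properties}(iii), which is precisely what the constraint $\gamma\le\alpha+\tfrac32$ accommodates. Once this weight bookkeeping is carried out uniformly across all four components of $X$, the bootstrap closes and the mild solution extends up to $T\sim\varepsilon^{-4}$.
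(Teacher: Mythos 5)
Your proposal is correct and follows essentially the same route as the paper: a bootstrap in the norm $X$, with the linear terms handled by conservation plus Corollary~\ref{cor:r=2 q=1-weighted} (matching the four initial-data hypotheses), the Duhamel terms closed by placing exactly one factor of each trilinear bound in a dispersive component of $X$, and the integrals $\int_0^t\langle s\rangle^{-1/2}ds\lesssim\langle t\rangle^{1/2}$ and $\int_0^t|t-s|^{-1/2}|s|^{-1/2}ds\lesssim1$ yielding $T\lesssim\varepsilon^{-4}$. The only minor discrepancy is that for the $L^\infty_{x,p}(\omega^\alpha)$ component in the regime $\alpha<-\tfrac12$ the paper uses the $L^\infty_p$ bound~\eqref{eq:T_low_infty} together with the embedding $L^2_p\hookrightarrow L^1_p$ (rather than~\eqref{eq:T_low_L2}), but the role of the constraint $\gamma\le\alpha+\tfrac32$ is exactly as you describe.
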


\begin{proof}
It suffices to close a bootstrap. Assume that
\begin{align}\label{bootstrap_condition}
       \max\Big\{ \sup_{t\in[0,T]}\|\omega^{\alpha} f(t)\|_{L_{x,p}^\infty},
\ \sup_{t\in[0,T]}\|f(t)\|_{L_{x,p}^1},
\ \sup_{t\in[0,T]}\langle t\rangle^{\frac12}\|\omega^{\frac{1}{2}} f(t)\|_{L_x^2L_p^1},
\ \sup_{t\in[0,T]}\langle t\rangle^{\frac12}\|\omega^{\gamma} f(t)\|_{L_x^\infty L_p^2}
\Big\}
\le C_1\varepsilon,
\end{align}
for some \(C_1>6\). Then
\[
\max\Big\{ \sup_{t\in[0,T]}\|\omega^{\alpha} f(t)\|_{L_{x,p}^\infty},
\ \sup_{t\in[0,T]}\|f(t)\|_{L_{x,p}^1},
\ \sup_{t\in[0,T]}\langle t\rangle^{\frac12}\|\omega^{\frac{1}{2}} f(t)\|_{L_x^2L_p^1},
\ \sup_{t\in[0,T]}\langle t\rangle^{\frac12}\|\omega^{\gamma} f(t)\|_{L_x^\infty L_p^2}
\Big\}
\le \frac{5}{6}C_1\varepsilon,
\]
provided \(T\lesssim \varepsilon^{-4}\).

By local well-posedness, we may assume \(t>1\).

\medskip
\noindent\textbf{Step 1: \(L^\infty_{x,p}\)-bound.}
Using Duhamel’s formula and Minkowski’s inequality,
\begin{align*}
    \|\omega^\alpha f\|_{L_{x,p}^\infty}\le 
    \|\omega^\alpha\mathcal{S}(t)f_0\|_{L_{x,p}^\infty}
    +\int_0^t \|\omega^\alpha \mathcal{S}(t-s)\mathcal{C}[f](s)\|_{L_{x,p}^\infty}\,ds.
\end{align*}
Since the weight \(\omega(p)\) commutes with \(\mathcal{S}(t)\) and by the dispersive estimate (see Lemma~\ref{lem:SW-weighted}),
\begin{align*}
    \|\omega^\alpha\mathcal{S}(t)f_0\|_{L_{x,p}^\infty}
   \le \|\omega^{\alpha}f_0\|_{L_{x,p}^\infty}\le \varepsilon<\frac{5}{12}C_1\varepsilon,
\end{align*}
whenever \(C_1>6\). Moreover,
\begin{align*}
    \int_0^t \|\omega^\alpha \mathcal{S}(t-s)\mathcal{C}[f](s)\|_{L_{x,p}^\infty}\,ds\lesssim \int_0^t \|\omega^\alpha \mathcal{C}[f](s)\|_{L_{x,p}^\infty}\,ds.
\end{align*}
If \(\alpha\ge -\tfrac{1}{2}\), then using the \(L^\infty\)-bound for the collision operator~\eqref{eq:T_infty}, the weighted \(L^q\) embedding~\eqref{eq:weighted_embedding}, the monotonicity of weighted norms~\eqref{eq:weight-monotonicity}, and the bootstrap hypothesis \eqref{bootstrap_condition}, we obtain
\begin{align*}
     \int_0^t \|\omega^\alpha \mathcal{C}[f](s)\|_{L_{x,p}^\infty}\,ds&\le C
     \int_0^t\|\omega^{\min\{\alpha+\frac{1}{2},1\}}f\|_{L_{x,p}^\infty}\|\omega f\|_{L_{x,p}^\infty}\|\omega^{\frac{1}{2}}f\|_{L_x^\infty L_p^1}\,ds
     \\
     &\le C
      \int_0^t\|\omega^{\min\{\alpha+\frac{1}{2},1\}}\|_{L_{x,p}^\infty}\|\omega f\|_{L_{x,p}^\infty}\|\omega^{\frac{1}{2}+\frac{1}{2}_{-}}f\|_{L_x^\infty L_p^2}\,ds
      \\
      &\le C\int_0^t \|\omega^\alpha f\|_{L_{x,p}^\infty}^2\|\omega^\gamma f\|_{L_x^\infty L_p^2}\,ds\\
      &\le C C_1^3\int_0^t \varepsilon^{3}\langle s\rangle^{-\frac{1}{2}}\,ds\\
     & \le C C_1^3 \varepsilon^{3} T^{\frac{1}{2}},
\end{align*}
provided \(\alpha \le \tfrac{1}{2}\), \(\gamma<1\), \(t\in(1,T]\). Hence
\[
 \int_0^t \|\omega^\alpha \mathcal{S}(t-s)\mathcal{C}[f](s)\|_{L_{x,p}^\infty}\,ds<\frac{5}{12}C_1 \varepsilon
\]
as long as \(t\in (1,T]\) and
\[
 C C_1 \varepsilon^3T^{\frac{1}{2}}\le \frac{5}{12}C_1\varepsilon,
\]
that is,
\[
 T\lesssim\varepsilon^{-4}.
\]
If \(\alpha<-\tfrac{1}{2}\), then using the \(L^\infty\)-bound~\eqref{eq:T_low_infty}, the monotonicity and embedding of weighted norms~\eqref{eq:weight-monotonicity}–\eqref{eq:weighted_embedding}, and \eqref{bootstrap_condition}, we obtain
\begin{align*}
    \int_0^t \|\omega^\alpha \mathcal{S}(t-s)\mathcal{C}[f](s)\|_{L_{x,p}^\infty}\,ds
    \le C
    \int_0^t\|\omega^{\alpha +\frac{1}{2}}f\|_{L_{x,p}^\infty}
    \|\omega^{\alpha +\frac{3}{2}}f\|_{L_{x,p}^\infty}
    \|\omega^{\alpha +1+\frac{1}{2}_{-}}f\|_{L_x^\infty L_p^2}\,ds
    \le CC_1^3 \varepsilon^3 T^{\frac{1}{2}},
\end{align*}
provided \(\gamma<\alpha+\tfrac{3}{2}\). In particular,
\[
 \int_0^t \|\omega^\alpha \mathcal{S}(t-s)\mathcal{C}[f](s)\|_{L_{x,p}^\infty}\,ds<\frac{5}{12} C_1\varepsilon
\]
whenever \(T\lesssim\varepsilon^{-4}\).

\medskip
\noindent\textbf{Step 2: \(L^1_{x,p}\)-bound.}
By Duhamel’s formula and Minkowski’s inequality,
\begin{align*}
    \| f\|_{L_{x,p}^1}\le 
    \|\mathcal{S}(t)f_0\|_{L_{x,p}^1}
    +\int_0^t \|\mathcal{S}(t-s)\mathcal{C}[f](s)\|_{L_{x,p}^1}\,ds.
\end{align*}
By the dispersive estimate (Lemma~\ref{lem:SW-weighted}),
\begin{align*}
     \|\mathcal{S}(t)f_0\|_{L_{x,p}^1}\le \|f_0\|_{L_{x,p}^1}\le \varepsilon<\frac{5}{12}C_1\varepsilon,
\end{align*}
for \(C_1>6\). Moreover,
\begin{align*}
    \int_0^t \|\mathcal{S}(t-s)\mathcal{C}[f](s)\|_{L_{x,p}^1}\,ds
    \lesssim
    \int_0^t \|\mathcal{C}[f](s)\|_{L_{x,p}^1}\,ds.
\end{align*}
Using the \(L^1\)-bound~\eqref{eq:T_L1}, the monotonicity and embedding of weighted norms~\eqref{eq:weight-monotonicity}–\eqref{eq:weighted_embedding}, and \eqref{bootstrap_condition}, we find
\begin{align*}
     \int_0^t \|\mathcal{C}[f](s)\|_{L_{x,p}^1}\,ds&\lesssim
     \int_0^t \|f\|_{L_{x,p}^1}\|\omega^{\frac{1}{2}}f\|_{L_{x,p}^\infty}
     \|f\|_{L_x^\infty L_p^1}\,ds\\
    & \le C
     \int_0^t \|f\|_{L_{x,p}^1}\|\omega^{\frac{1}{2}}f\|_{L_{x,p}^\infty}
     \|\omega^{\frac{1}{2}_{-}}f\|_{L_x^\infty L_p^2}\,ds\\
     &\le C
      \int_0^t \|f\|_{L_{x,p}^1}\|\omega^{\alpha}f\|_{L_{x,p}^\infty}
     \|\omega^{\gamma}f\|_{L_x^\infty L_p^2}\,ds\\
     &\le CC_1^3\int_0^t\varepsilon^3\langle s\rangle^{-\frac{1}{2}}\,ds\\
     &\le CC_1^3 T^{\frac{1}{2}}\varepsilon^3,
\end{align*}
provided \(\alpha \le \tfrac{1}{2}\) and \(\gamma <\tfrac{1}{2}\). Hence
\[
 \int_0^t \|\mathcal{S}(t-s)\mathcal{C}[f](s)\|_{L_{x,p}^1}\,ds<\frac{5}{12}C_1\varepsilon
 \quad\text{if}\quad
 T\lesssim\varepsilon^{-4}.
\]

\medskip
\noindent\textbf{Step 3: \(L_x^\infty L_p^2\)-bound.}
By Duhamel’s formula and Minkowski’s inequality,

\begin{align*}
    \langle t\rangle^{\frac{1}{2}} \|\omega^{\gamma}f\|_{L_x^\infty L_p^2}
    \le  \langle t\rangle^{\frac{1}{2}} \|\omega^{\gamma}\mathcal{S}(t)f_0\|_{L_x^\infty L_p^2}+
     \langle t\rangle^{\frac{1}{2}}\int_0^t \|\omega^\gamma \mathcal{S}(t-s)\mathcal{C}[f](s)\|_{L_x^\infty L_p^2}\,ds.
\end{align*}
Using the commutation property of the weights with $\mathcal{S}(t)$~\eqref{eq:commute} 
together with the weighted dispersive estimate in Lemma~\ref{lem:SW-weighted},

\begin{align*}
     \langle t\rangle^{\frac{1}{2}} \|\omega^{\gamma}\mathcal{S}(t)f_0\|_{L_x^\infty L_p^2}\le 2\langle t\rangle^{\frac{1}{2}} t^{-\frac{1}{2}}\|\omega^{\gamma-\frac{1}{2}}f_0\|_{L_x^2L_p^\infty}
     \le 2 \times 2^{\frac 1 4}\varepsilon<\frac{5}{12}C_1\varepsilon,
\end{align*}
for \(C_1>6\) and \(t>1\).
Denoting
\[
I_1= \langle t\rangle^{\frac{1}{2}}\int_0^{t} \|\omega^\gamma \mathcal{S}(t-s)\mathcal{C}[f](s)\|_{L_x^\infty L_p^2}\,ds,
\]
by the commutation property of the weights with $\mathcal{S}(t)$~\eqref{eq:commute} and Lemma~\ref{lem:SW-weighted},
\begin{align*}
    I_1\lesssim  
    \langle t\rangle^{\frac{1}{2}}\int_0^{t}
    | t-s|^{-\frac{1}{2}}\|\omega^{\gamma-\frac{1}{2}}f\|_{L_x^2L_p^\infty}\,ds.
\end{align*}
Using \eqref{eq:T_infty}, \eqref{eq:weighted_embedding}, \eqref{eq:weight-monotonicity}, and \eqref{bootstrap_condition}, for \(\gamma\ge 0\) we get
\begin{align*}
    I_1&\le C 
    \langle t\rangle^{\frac{1}{2}}
    \int_0^{t}
     | t-s |^{-\frac{1}{2}}\|\omega^{\min\{\gamma,1\}}f\|_{L_{x,p}^\infty}\|\omega f\|_{L_{x,p}^\infty}
     \|\omega^{\frac{1}{2}}f\|_{L_x^2L_p^1}\,ds\\
    &\le C  \langle t\rangle^{\frac{1}{2}}\int_0^{t}
     | t-s|^{-\frac{1}{2}}\|\omega^{\alpha}f\|_{L_{x,p}^\infty}^2
     \|\omega^{\frac{1}{2}}f\|_{L_x^2L_p^1}\,ds\\
    &\le CC_1^3 \langle t\rangle^{\frac{1}{2}}\int_0^{t}
    \varepsilon^3  | t-s|^{-\frac{1}{2}}  |s|^{-\frac{1}{2}}\,ds
    \le C C_1^3 \langle t\rangle^{\frac{1}{2}}  \varepsilon^3,
\end{align*}
provided \(\gamma\le 1\) and \(\alpha\le \gamma\). (Here we used, for \(t>1\),
\(\int_0^{t}  | t-s|^{-\frac{1}{2}}  |s|^{-\frac{1}{2}}\,ds\lesssim 1\), obtained by splitting \([0,t]\) into \([0,t/2]\cup[t/2,t]\).)
Consequently,
\[
I_1<\frac{5}{12}C_1\varepsilon
\]
whenever \( C C_1 \varepsilon^3T^{\frac{1}{2}}\le \frac{5}{12}C_1\varepsilon\), i.e. \(T\lesssim \varepsilon^{-4}\).

\medskip
\noindent\textbf{Step 4: \(L_x^2 L_p^1\)-bound.}
By Duhamel’s formula and Minkowski’s inequality,

\begin{align*}
    \langle t\rangle^{\frac{1}{2}}\|\omega^{\frac{1}{2}}f\|_{L_x^2L_p^1}
    \le
     \langle t\rangle^{\frac{1}{2}}\|\omega^{\frac{1}{2}}\mathcal{S}(t)f_0\|_{L_x^2L_p^1}+
     \langle t\rangle^{\frac{1}{2}}\int_0^t \|\omega^{\frac{1}{2}}\mathcal{S}(t-s)\mathcal{C}[f](s)\|_{L_x^2L_p^1}\,ds.
\end{align*}
Since by the commutation property of the weights with $\mathcal{S}(t)$~\eqref{eq:commute} and by Lemma~\ref{lem:SW-weighted},
\begin{align*}
 \langle t\rangle^{\frac{1}{2}}\|\omega^{\frac{1}{2}}\mathcal{S}(t)f_0\|_{L_x^2L_p^1}\le 2 \langle t\rangle^{\frac{1}{2}}
 t^{-\frac{1}{2}}\|f_0\|_{L_x^1 L_p^2}  \le 2 \times 2^{\frac 1 4}\varepsilon<\frac{6}{12}C_1\varepsilon,
\end{align*}
for \(C_1>6\) and \(t>1\). 
Denoting
\[
 I_1=\langle t\rangle^{\frac{1}{2}}\int_0^{t} \|\omega^{\frac{1}{2}}\mathcal{S}(t-s)\mathcal{C}[f](s)\|_{L_x^2L_p^1}\,ds,
\]
By ~\eqref{eq:commute} and Lemma~\ref{lem:SW-weighted},
\begin{align*}
     I_1\lesssim \langle t\rangle^{\frac{1}{2}}\int_0^{t} | t-s|^{-\frac{1}{2}}\|\mathcal{C}[f](s)\|_{L_x^1L_p^2}\,ds.
\end{align*}
Using \eqref{eq:T_L2}, \eqref{eq:weighted_embedding}, \eqref{eq:weight-monotonicity}, and \eqref{bootstrap_condition},
\begin{align*}
    I_1&\le C
    \langle t\rangle^{\frac{1}{2}}\int_0^{t} | t-s|^{-\frac{1}{2}}\|\omega^{\frac{1}{2}}f\|_{L_x^\infty L_p^2}\|\omega f\|_{L_{x,p}^\infty}\|
    \omega^{\frac{1}{2}}f\|_{L_{x,p}^1}\,ds\\
    &\le C
    \langle t\rangle^{\frac{1}{2}}\int_0^{t} | t-s|^{-\frac{1}{2}}\|\omega^\gamma f\|_{L_x^\infty L_p^2}\|\omega^\alpha f\|_{L_{x,p}^\infty}\|
    f\|_{L_{x,p }^1}\,ds\\
    &\le CC_1^3\langle t\rangle^{\frac{1}{2}}\int_0^{t}| t-s|^{-\frac{1}{2}} | s|^{-\frac{1}{2}}\varepsilon^{3}\,ds
    \le CC_1^3\langle t\rangle^{\frac{1}{2}}\varepsilon^{3},
\end{align*}
provided \(\gamma\le \tfrac{1}{2}\), and \(\alpha\le 1\). Consequently,
\[
I_1<\frac{5}{12} C_1\varepsilon
\]
whenever
\(C C_1 \varepsilon^3T^{\frac{1}{2}}\le \frac{5}{12}C_1\varepsilon\), i.e.
\(T\lesssim\varepsilon^{-4}\).
\end{proof}

\begin{remark}
Although the dispersive estimate yields an optimal decay rate of $\frac{1}{t}$ for the mapping 
\[
\mathcal{S}(t): L_x^1L_p^\infty(\omega^{-1}) \longrightarrow L_x^\infty L_p^1,
\]
such a decay is not sufficient to close the bootstrap argument (see the $L_x^\infty L_p^2$ estimate in the proof above). 
To overcome this issue, one must use the spaces $L_x^rL_p^\infty$ and $L_x^rL_p^1$, \(1<r<\infty\),
which respectively provide the decay rates $t^{-1/r}$ and $t^{-(1-1/r)}$. 
Balancing these two exponents gives the optimal choice $r=2$, 
which corresponds to the decay $t^{-1/2}$ used in our dispersive framework.
\end{remark}

\begin{corollary}[Conservation of mass and energy]\label{cor:conservation}
Under the same assumptions as in Theorem~\ref{thm:Longtime_dispersive},
the mild solution $f(t,p,x)$ constructed there satisfies the conservation laws for all $t\in[0,T]$:
\begin{align}
    \int_{\mathbb{R}}\!\int_{\mathbb{T}} f(t,p,x)\,dp\,dx
    &= \int_{\mathbb{R}}\!\int_{\mathbb{T}} f_0(p,x)\,dp\,dx, \label{eq:mass_cons}\\[4pt]
    \int_{\mathbb{R}}\!\int_{\mathbb{T}} \omega(p)\,f(t,p,x)\,dp\,dx
    &= \int_{\mathbb{R}}\!\int_{\mathbb{T}} \omega(p)\,f_0(p,x)\,dp\,dx. \label{eq:energy_cons}
\end{align}
\end{corollary}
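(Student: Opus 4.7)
The plan is to deduce both conservation laws from the mild formulation \eqref{Duhamel} by testing against the collisional invariants $\phi(p)=1$ and $\phi(p)=\omega(p)$ and integrating over $\mathbb R\times\mathbb T$. The bound $f\in X$ together with the $L^1_{x,p}$ estimate \eqref{eq:T_L1} ensures $\mathcal C[f]\in L^1([0,T]\times\mathbb R\times\mathbb T)$, so Fubini is licit throughout.

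\textbf{Transport piece.} Since $\mathcal S(t)$ is pure $x$-translation (leaving $p$ fixed), for any $g\in L^1_{x,p}$ and any bounded weight $\phi(p)$ one has $\int_{\mathbb R\times\mathbb T}\phi(p)\,\mathcal S(t)g\,dp\,dx=\int_{\mathbb R\times\mathbb T}\phi(p)\,g\,dp\,dx$ by translation invariance of Lebesgue measure. Applied to $g=f_0$ and to each slice $g=\mathcal C[f](s)$ inside the Duhamel integral, this eliminates the semigroup entirely and reduces both \eqref{eq:mass_cons} and \eqref{eq:energy_cons} to showing, for a.e.\ $(s,x)\in[0,T]\times\mathbb R$,
\[
\int_{\mathbb T}\phi(p_0)\,\mathcal C[f](s,x,p_0)\,dp_0=0,\qquad \phi\in\{1,\omega\}.
\]

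\textbf{Symmetrization.} Working from the pre-parametrized form \eqref{eq:collision_general}, expand
\[
\mathcal C[f](p_0)=\int_{\mathbb T^3}\delta(\Sigma)\delta(\Omega)\,\omega_0\omega_1\omega_2\omega_3\bigl(f_1f_2f_3+f_0f_2f_3-f_0f_1f_3-f_0f_1f_2\bigr)\,dp_1\,dp_2\,dp_3,
\]
so that on $\mathbb T^4$ the integrand factors as $K\cdot B$, with kernel $K=\delta(\Sigma)\delta(\Omega)\prod_\ell\omega_\ell$ fully symmetric (the deltas are even functions of $\Sigma,\Omega$) and bracket $B=f_1f_2f_3+f_0f_2f_3-f_0f_1f_3-f_0f_1f_2$. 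The three involutions $p_0\!\leftrightarrow\! p_1$, $p_2\!\leftrightarrow\! p_3$, and $(p_0,p_1)\!\leftrightarrow\!(p_2,p_3)$ preserve $K$ and send $B\!\mapsto\! B$, $B\!\mapsto\! B$, and $B\!\mapsto\! -B$ respectively. Averaging gives
\[
\int_{\mathbb T^4}\phi_0\,KB\,dp=\tfrac14\int_{\mathbb T^4}\bigl(\phi_0+\phi_1-\phi_2-\phi_3\bigr)\,KB\,dp,
\]
and the factor $\phi_0+\phi_1-\phi_2-\phi_3$ vanishes on $\{\Sigma=\Omega=0\}$ for both invariants: trivially for $\phi\equiv 1$, and by the very relation $\Omega=0$ for $\phi=\omega$.

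\textbf{Main obstacle.} The symmetrization above is formal at the level of $\delta$-distributions, so the principal technical step is to render it rigorous within the convention of Section~2 that trivial resonances are discarded. The cleanest route is to interpret $K$ through the parametrization \eqref{eq:collision_param}: each of the three involutions descends to a measure-preserving change of variables on the parametrized nontrivial manifold, and the gain--loss decomposition \eqref{eq:C_decomp} together with the $L^1_p$ bounds of Proposition~\ref{prop:master_bounds} (which, after integration in $x$ and use of $f\in X$, promote to $L^1_{x,p}$ control of each $\mathcal G_i,\mathcal L_i$) guarantees absolute convergence of every integral encountered, legitimizing the application of Fubini and of the three involutions. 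No additional smallness or weight restriction beyond the hypotheses of Theorem~\ref{thm:Longtime_dispersive} is required.
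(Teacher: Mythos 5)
Your proposal is correct and follows essentially the same route as the paper's (very terse) proof sketch: Duhamel's formula, translation invariance of $\mathcal{S}(t)$ in $x$, and the fact that $1$ and $\omega(p)$ are collision invariants, with the $L^1$ bounds from Step~2 of the bootstrap justifying the integrations. The only difference is that you spell out the standard symmetrization argument behind the collision-invariant cancellation (and flag the issue of making it rigorous on the parametrized nontrivial resonant manifold), which the paper simply takes for granted.
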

\begin{proof}[Proof sketch]
The argument is identical to that of the \(L^1_{x,p}\)-bound in 
\textbf{Step~2} of the bootstrap scheme.
Using the Duhamel formulation and the fact that the collision operator
preserves the collision invariants \(1\) and \(\omega(p)\),
one obtains conservation of mass and energy.
\end{proof}
\section*{Acknowledgements}
The author would like to thank Pierre Germain and Michele Coti Zelati for their
supervision and continuous support.
The author also thanks Angeliki Menegaki and Zongguang Li for helpful discussions.

\bibliographystyle{plain} 
\bibliography{refs}  

@article{AokiLukkarinenSpohn2006,
  author  = {K. Aoki and J. Lukkarinen and H. Spohn},
  title   = {Energy Transport in Weakly Anharmonic Chains},
  journal = {Journal of Statistical Physics},
  volume  = {124},
  number  = {5},
  pages   = {1105--1129},
  year    = {2006},
  doi     = {10.1007/s10955-006-9171-2}
}

@misc{GermainLaMenegaki2024RJ,
  author        = {P. Germain and J. La and A. Menegaki},
  title         = {Stability of Rayleigh--Jeans equilibria in the kinetic {FPU} equation},
  year          = {2024},
  eprint        = {2409.01507},
  archivePrefix = {arXiv},
  primaryClass  = {math.AP}
}

@article{EscobedoGermainLaMenegaki2025Entropy,
  author  = {M. Escobedo and P. Germain and J. La and A. Menegaki},
  title   = {Entropy Maximizers for Kinetic Wave Equations Set on Tori},
  journal = {Bulletin of the London Mathematical Society},
  year    = {2025},
  doi     = {10.1112/blms.70191},
  eprint  = {2412.16026},
  archivePrefix = {arXiv},
  primaryClass  = {math.AP},
  note    = {Published online}
}

@techreport{FPU1955,
  author       = {E. Fermi and J. R. Pasta and S. M. Ulam},
  title        = {Studies of Nonlinear Problems {I}},
  institution  = {Los Alamos National Laboratory},
  number       = {LA-1940},
  year         = {1955},
  doi          = {10.2172/4376203},
  note         = {Los Alamos report}
}

@misc{AmpatzoglouLeger2024,
  author        = {I. Ampatzoglou and F. L{\'e}ger},
  title         = {Scattering theory for the inhomogeneous kinetic wave equation},
  year          = {2024},
  eprint        = {2408.05818},
  archivePrefix = {arXiv},
  primaryClass  = {math.AP}
}

@article{Arsenio2011,
  author       = {D. Ars\'enio},
  title        = {On the global existence of mild solutions to the Boltzmann equation for small data in {$L^D$}},
  journal      = {Communications in Mathematical Physics},
  volume       = {302},
  number={2},
  pages        = {453--476},
  year         = {2011}
}

@article{CastellaPerthame1996,
  author  = {F. Castella and B. Perthame},
  title   = {Estimations de Strichartz pour les {\'e}quations de transport cin{\'e}tique},
  journal = {C. R. Acad. Sci. Paris S{\'e}r. I Math.},
  volume  = {322},
  number  = {6},
  pages   = {535--540},
  year    = {1996}
}

@article{SteinWeiss1958,
  author  = {E. M. Stein and G. Weiss},
  title   = {Interpolation of operators with change of measures},
  journal = {Trans. Amer. Math. Soc.},
  volume  = {87},
  number={1},
  pages   = {159--172},
  year    = {1958}
}

@article{Gustavsson1982,
  author  = {J. Gustavsson},
  title   = {On interpolation of weighted {$L^p$}-spaces and Ovchinnikov's theorem},
  journal = {Studia Mathematica},
  volume  = {72},
  number={3},
  pages={237--251},
  year    = {1982}
}

@article{LS08,
  author  = {J. Lukkarinen and H. Spohn},
  title   = {Anomalous energy transport in the {FPU--$\beta$} chain},
  journal = {Communications on Pure and Applied Mathematics},
  volume  = {61},
  number  = {12},
  pages   = {1753--1786},
  year    = {2008}
}

@article{Spohn1980MarkovianLimits,
  author  = {H. Spohn},
  title   = {Kinetic equations from {Hamiltonian} dynamics: {Markovian} limits},
  journal = {Reviews of Modern Physics},
  volume  = {52},
  number  = {3},
  pages   = {569--615},
  year    = {1980},
  doi     = {10.1103/RevModPhys.52.569}
}

@article{MouhotVillani2011,
  author  = {C. Mouhot and C. Villani},
  title   = {On {Landau} damping},
  journal = {Acta Mathematica},
  volume  = {207},
  number  = {1},
  pages   = {29--201},
  year    = {2011},
  doi     = {10.1007/s11511-011-0068-9}
}

@article{Guo2002,
  author  = {Y. Guo},
  title   = {The {Landau} equation in a periodic box},
  journal = {Communications in Mathematical Physics},
  volume  = {231},
  number  = {3},
  pages   = {391--434},
  year    = {2002},
  doi     = {10.1007/s00220-002-0729-3}
}

@article{Peierls1929,
  author  = {R. Peierls},
  title   = {Zur kinetischen Theorie der W{\"a}rmeleitung in Kristallen},
  journal = {Annalen der Physik},
  volume  = {395},
  number  = {8},
  pages   = {1055--1101},
  year    = {1929},
  doi     = {10.1002/andp.19293950803}
}

@article{LepriLiviPoliti2003,
  author  = {S. Lepri and R. Livi and A. Politi},
  title   = {Thermal conduction in classical low-dimensional lattices},
  journal = {Physics Reports},
  volume  = {377},
  number  = {1},
  pages   = {1--80},
  year    = {2003},
  doi     = {10.1016/S0370-1573(02)00558-6}
}

@article{Dhar2008,
  author  = {A. Dhar},
  title   = {Heat transport in low-dimensional systems},
  journal = {Advances in Physics},
  volume  = {57},
  number  = {5},
  pages   = {457--537},
  year    = {2008},
  doi     = {10.1080/00018730802538522}
}

@article{Spohn2006PhononBoltzmann,
  author  = {H. Spohn},
  title   = {The phonon Boltzmann equation, properties and link to weakly anharmonic lattice dynamics},
  journal = {Journal of Statistical Physics},
  volume  = {124},
  number  = {2-4},
  pages   = {1041--1104},
  year    = {2006},
  doi     = {10.1007/s10955-005-8088-5}
}

@article{DesvillettesVillani2001,
  author  = {L. Desvillettes and C. Villani},
  title   = {On the trend to global equilibrium in spatially inhomogeneous entropy-dissipating systems: the linear Fokker--Planck equation},
  journal = {Communications in Pure and Applied Mathematics},
  volume  = {54},
  number  = {1},
  pages   = {1--42},
  year    = {2001},
  doi     = {10.1002/1097-0312(200101)54:1<1::AID-CPA1>3.0.CO;2-Q}
}

@article{BriantGuo2016,
  author  = {M. Briant and Y. Guo},
  title   = {Asymptotic stability of the {Boltzmann} equation with {Maxwell} boundary conditions},
  journal = {Journal of Differential Equations},
  volume  = {261},
  number  = {12},
  pages   = {7000--7079},
  year    = {2016},
  doi     = {10.1016/j.jde.2016.09.014}
}

@article{OnoratoLvovDematteisChibbaro2023,
  author  = {M. Onorato and Y. V. Lvov and G. Dematteis and S. Chibbaro},
  title   = {Wave turbulence and thermalization in one-dimensional chains},
  journal = {Physics Reports},
  volume  = {1040},
  pages   = {1--36},
  year    = {2023}
}
\medskip
\noindent\textsc{Department of Mathematics,
Imperial College London,
London SW7 2AZ, United Kingdom}

\noindent\textit{Email address:} \texttt{h.xiang23@imperial.ac.uk}

\end{document}